\numberwithin{equation}{section}
\def\clsp{\overline{\operatorname{span}}}
\def\supp{\operatorname{supp}}
\def\iso{\operatorname{Iso}}
\def\max{\operatorname{max}}
\def\supp{\operatorname{supp}}
\newcommand{\grp}{\operatorname{\mathcal{G}}}
\newcommand{\reg}{{\operatorname{reg}}}
\newcommand{\sing}{{\operatorname{sing}}}
\newcommand{\dom}{{\operatorname{dom}}}
\newcommand{\ran}{{\operatorname{ran}}}
\newcommand{\is}{{\operatorname{iso}}}
\newcommand{\at}[1]{\overline{\mathrm{t}#1}}
\newcommand{\prim}{\operatorname{Prim}^\tau}
\def\C{\mathbb{C}}
\def\R{\mathbb{R}}
\def\N{\mathbb{N}}
\def\Z{\mathbb{Z}}
\def\T{\mathbb{T}}
\def\GG{\mathcal{G}}
\def\KK{\mathcal{K}}
\def\GG{\mathcal{G}}
\def\DD{\mathcal{D}}
\def\DD{\mathcal{D}}
\newtheorem{thm}{Theorem}[section]
\newtheorem{cor}[thm]{Corollary}
\newtheorem{lemma}[thm]{Lemma}
\newtheorem{prop}[thm]{Proposition}
\theoremstyle{definition}
\newtheorem{definition}[thm]{Definition}
\newtheorem{notation}[thm]{Notation}
\theoremstyle{remark}
\newtheorem{remark}[thm]{Remark}
\newtheorem{example}[thm]{Example}
\begin{document}

\title[Graph algebras and orbit equivalence]{Graph algebras and orbit equivalence}

\author{Nathan Brownlowe}
\address{Nathan Brownlowe and Michael F. Whittaker \\ School of Mathematics and
Applied Statistics  \\
The University of Wollongong\\
NSW  2522\\
AUSTRALIA} 
\email{nathanb@uow.edu.au, mfwhittaker@gmail.com}
\author{Toke Meier Carlsen}
\address{Toke Meier Carlsen \\ Department of Mathematical Sciences \\ Norwegian University of Science and Technology (NTNU) \\
7491 Trondheim \\
NORWAY}
\email{Toke.Meier.Carlsen@math.ntnu.no}
\author{Michael F. Whittaker}

\thanks{This research was supported by the Australian Research Council.}

\keywords{$C^*$-algebra, directed graph, orbit equivalence, groupoid}
\subjclass[2010]{Primary: {46L55}; Secondary: {46L35, 37B10}}

\begin{abstract}
We introduce the notion of orbit equivalence of directed graphs,  
following Matsumoto's notion of 
continuous orbit equivalence for topological Markov shifts. We show that two 
graphs in which every cycle has an exit are orbit equivalent if and only if 
there is a diagonal-preserving isomorphism between their $C^*$-algebras. We 
show that it is necessary to assume that every cycle has an exit for the 
forward implication, but that the reverse implication holds for arbitrary 
graphs. As part of our 
analysis of arbitrary graphs $E$ we construct a groupoid 
$\grp_{(C^*(E),\mathcal{D}(E))}$ from the graph algebra $C^*(E)$ and its 
diagonal subalgebra $\DD(E)$ which generalises Renault's Weyl groupoid 
construction applied to $(C^*(E),\DD(E))$. We show that 
$\grp_{(C^*(E),\mathcal{D}(E))}$ recovers the graph 
groupoid $\GG_E$ without the assumption that every cycle in $E$ has an exit, 
which is required to apply Renault's results to $(C^*(E),\DD(E))$. We finish 
with applications of our results to out-splittings of graphs and to amplified 
graphs.
\end{abstract}

\maketitle

\section{Introduction}\label{sec: intro}

The relationship between orbit equivalence and isomorphism of $C^*$-algebras 
has been studied extensively in the last 20 years. 
The first result of this type was the celebrated theorem of Giordano, Putnam 
and Skau \cite[Theorem 2.4]{GPS}, in which they showed that orbit equivalence for minimal dynamical systems on the Cantor set is equivalent to isomorphism of their corresponding crossed product $C^*$-algebras. 
The importance of Giordano, Putnam and Skau's result cannot be overstated. 
In general there is no direct method of checking whether two Cantor 
minimal systems are orbit equivalent. However, because the crossed product 
$C^*$-algebras are classifiable, Giordano, Putnam and Skau's result means that 
orbit equivalence can be determined using 
$K$-theory. The work in \cite{GPS} has been generalised in 
many directions, including Tomiyama's results on topologically free dynamical 
systems on compact Hausdorff spaces 
\cite{Tom}, and Giordano, Matui, Putnam and Skau's extension of \cite[Theorem 2.4]{GPS} to minimal 
$\Z^d$-actions on the Cantor set \cite{GMPS}. 

More recently, Matsumoto and Matui have shown in \cite{MM} that two 
irreducible one-sided topological Markov shifts $(X_A,\sigma_A)$ and 
$(X_B,\sigma_B)$ are continuously orbit equivalent if and only if the 
corresponding Cuntz-Krieger algebras $\mathcal{O}_A$ and $\mathcal{O}_B$ are 
isomorphic and $\det(I-A)=\det(I-B)$. The proof of Matsumoto and Matui's 
theorem relies on two key results. The first of these is \cite[Theorem 
1.1]{Mat}, in which Matsumoto proves that the following statements are 
equivalent:
\begin{enumerate}
\item $(X_A,\sigma_A)$ and $(X_B,\sigma_B)$ are continuously orbit equivalent,
\item there exists a $*$-isomorphism $\phi:\mathcal{O}_A\to\mathcal{O}_B$ which maps the maximal abelian subalgebra $\mathcal{D}_A$ onto $\mathcal{D}_B$, and 
\item the topological full group of $(X_A,\sigma_A)$ and the topological full group of $(X_B,\sigma_B)$ are spatially isomorphic. (In \cite[Theorem 1.1]{Mat3}, Matsumoto showed that this is equivalent to the topological full groups being abstractly isomorphic.)
\end{enumerate}
The second key result is \cite[Proposition 4.13]{Ren2}, which, as noticed by 
Matui (see \cite[Theorem 5.1]{Matui}), implies that there exists a 
$*$-isomorphism $\phi:\mathcal{O}_A\to\mathcal{O}_B$ that maps the maximal 
abelian subalgebra, or diagonal, $\mathcal{D}_A$ onto $\mathcal{D}_B$ if and 
only if the corresponding groupoids $\mathcal{G}_A$ and $\mathcal{G}_B$ are 
isomorphic.

In this paper we initiate the study of orbit equivalence of directed graphs, 
and we prove the analogous result to \cite[Proposition 4.13]{Ren2} for graph 
algebras. In particular, as part of our main result we prove that if $E$ and 
$F$ are two graphs in which every cycle has an exit, then the following are 
equivalent:
\begin{enumerate}
\item[(1)] There is an isomorphism from $C^*(E)$ to $C^*(F)$ which maps the 
diagonal subalgebra $\mathcal{D}(E)$ onto $\mathcal{D}(F)$.
\item[(2)] The graph groupoids $\grp_E$ and $\grp_E$ are isomorphic as 
topological groupoids.
\item[(3)] The pseudogroups of $E$ and $F$ are isomorphic.
\item[(4)] The graphs $E$ and $F$ are orbit equivalent.
\end{enumerate}

It is natural to ask whether every cycle having an exit is necessary for our 
results. In our main result we in fact prove that $(1)\iff (2)$, $(3)\iff (4)$ 
and $(2)\implies (3)$ all hold for arbitrary directed graphs. It is only the 
implication $(3)\implies (2)$ that requires that every cycle has an exit (and we provide examples that show that $(3)\implies (2)$ does not hold in general without the assumption that every cycle has an exit). Our 
analysis of these implications for arbitrary graphs provides our most technical 
innovation, which 
is the introduction of a groupoid $\grp_{(C^*(E),\DD(E))}$ associated to 
$(C^*(E),\DD(E))$ that we call the extended Weyl groupoid. Our construction 
generalises Renault's Weyl groupoid construction from 
\cite[Definition~4.11]{Ren2} applied to $(C^*(E),\DD(E))$. We show that 
$\grp_{(C^*(E),\DD(E))}$ and $\grp_E$ are isomorphic as topological groupoids 
for an arbitrary graph $E$, which can be deduced from 
Renault's results in \cite{Ren2} only when every cycle in $E$ has an exit.

We conclude our paper with two applications of our main theorem. Our 
first application shows that if two general graphs $E$ and $F$ are conjugate 
then there is an isomorphism from $C^*(E)$ to $C^*(F)$ which maps 
$\mathcal{D}(E)$ onto $\mathcal{D}(F)$. As a corollary, we strengthen a result 
of Bates and Pask \cite[Theorem 3.2]{BP} on out-splitting of graphs. Our 
second application adds three additional equivalences to Eilers, Ruiz, and S\o 
rensen's complete invariant for amplified graphs \cite[Theorem 1.1]{ERS}.

The paper is organized as follows. Section~\ref{sec: Directed graphs and orbit 
equivalence} provides background on graphs, their groupoids and their 
$C^*$-algebras. In Section~\ref{sec: oe and pseudo} we define orbit 
equivalence of graphs and associate with each graph a pseudogroup which is the analogue of the topological full group Matsumoto has associated with each irreducible one-sided topological Markov shift, and we show that two graphs are orbit 
equivalent if and only if their pseudogroups are isomorphic. In 
Section~\ref{sec: ext Weyl} we construct the extended Weyl groupoid 
$\grp_{(C^*(E),\DD(E))}$ from $(C^*(E),\DD(E))$, and we show that 
$\grp_{(C^*(E),\DD(E))}$ and $\grp_E$ are isomorphic as topological groupoids. 
We use this result to show that if there is 
a diagonal-preserving isomorphism from $C^*(E)$ to $C^*(F)$, then $\grp_E$ and 
$\grp_F$ are isomorphic as topological 
groupoids. In Section~\ref{sec: Statement of the main result} we finish the proof of our main 
theorem and 
provide examples. Finally, in Section~\ref{sec: applications} we give the two 
applications of our main theorem.

\begin{remark}
We have learned that Xin Li has also considered orbit equivalence for directed graphs, and has independently proved that two graphs in which every cycle has an exit are orbit equivalent if and only if there is a diagonal-preserving isomorphism between their C*-algebras.
\end{remark}

\section{Background on the groupoids and $C^*$-algebras of directed 
graphs}\label{sec: Directed graphs and orbit equivalence}

We begin with some background on graphs and their $C^*$-algebras. In this section we recall the definitions of the boundary path space of a directed graph, graph $C^*$-algebras and graph groupoids. 

\subsection{Graphs and their $C^*$-algebras}
We refer the reader to \cite{Raeburn2005} for a more detailed treatment on graphs and their $C^*$-algebras. However, we note that the directions of arrows defining a graph are reversed in this paper. We used this convention so that our results can easily be compared with the work of Matsumoto and Matui's work on shift spaces.

A {\em directed graph} (also called a \emph{quiver}) $E=(E^0,E^1,r,s)$ consists of countable sets $E^0$ and $E^1$, and range and source maps $r,s:E^1\to E^0$. The elements of $E^0$ are called vertices, and the elements of $E^1$ are called edges. 

A {\em path} $\mu$ of length $n$ in $E$ is a sequence of edges $\mu=\mu_1\dots\mu_n$ such that $r(\mu_i)=s(\mu_{i+1})$ for all $1\le i\le n-1$. The set of paths of length $n$ is denoted $E^n$. We denote by $|\mu|$ the length of $\mu$. The range and source maps extend naturally to paths: $s(\mu):=s(\mu_1)$ and $r(\mu):=r(\mu_n)$. We regard the elements of $E^0$ as path of length 0, and for $v \in E^0$ we set $s(v):=r(v):=v$. For $v\in E^0$ and $n\in\N$ we denote by $vE^n$ the set of paths of length $n$ with source $v$, and by $E^nv$ the paths of length $n$ with range $v$. We define $E^*:=\bigcup_{n\in\N}E^n$ to be the collection of all paths with finite length. For $v,w\in E^0$ let $vE^*w:=\{\mu\in E^*:s(\mu)=v\text{ and }r(\mu)=w\}$. We define $E^0_\reg:=\{v\in E^0:vE^1\text{ is finite and nonempty}\}$ and $E^0_\sing:=E^0\setminus E^0_\reg$. If $\mu=\mu_1\mu_2\cdots\mu_m, \nu=\nu_1\nu_2\cdots\nu_n\in E^*$ and $r(\mu)=s(\nu)$, then we let $\mu\nu$ denote the path $\mu_1\mu_2\cdots\mu_m\nu_1\nu_2\cdots\nu_n$. 

A {\em loop} (also called a \emph{cycle}) in $E$ is a path $\mu\in E^*$ such that $|\mu|\ge 1$ and $s(\mu)=r(\mu)$. If $\mu$ is a loop and $k$ is a positive integer, then $\mu^k$ denotes the loop $\mu\mu\cdots\mu$ where $\mu$ is repeated $k$-times. We say that the loop $\mu$ is \emph{simple} if $\mu$ is not equal to $\nu^k$ for any loop $\nu$ and any integer $k\ge 2$. Notice than any loop $\mu$ is equal to $\nu^k$ for some simple loop $\nu$ and some positive integer $k$. An edge $e$ is an {\em exit} to the loop $\mu$ if there exists $i$ such that $s(e)=s(\mu_i)$ and $e\not=\mu_i$. A graph is said to satisfy \emph{condition (L)} if every loop has an exit. 

A {\em Cuntz-Krieger $E$-family} $\{P,S\}$ consists of a set of mutually orthogonal projections $\{P_v:v\in E^0\}$ and partial isometries $\{S_e:e\in E^1\}$ satisfying
\begin{enumerate}
\item[(CK1)] $S_e^*S_e=P_{r(e)}$ for all $e\in E^1$;
\item[(CK2)] $S_eS_e^*\le P_{s(e)}$ for all $e\in E^1$;
\item[(CK3)] $\displaystyle{P_v=\sum_{e\in vE^1}S_eS_e^*}$ for all $v\in E^0_\reg$.
\end{enumerate}
The {\em graph $C^*$-algebra} $C^*(E)$ is the universal $C^*$-algebra generated by a Cuntz-Krieger $E$-family. We denote by $\{p,s\}$ the Cuntz-Krieger $E$-family generating $C^*(E)$. There is a strongly continuous action $\gamma:C^*(E)\to\T$, called the {\em gauge action}, satisfying $\gamma_z(p_v)=p_v$ and $\gamma_z(s_e)=zs_e$, for all $z\in \T$, $v\in E^0$, $e\in E^1$. If $\{Q,T\}$ is a Cuntz-Krieger $E$-family in a $C^*$-algebra $B$, then we denote by $\pi_{Q,T}$ the homomorphism $C^*(E)\to B$ such that $\pi_{Q,T}(p_v)=Q_v$ for all $v\in E^0$, and $\pi_{Q,T}(s_e)=T_e$ for all $e\in E^1$. an Huef and Raeburn's \emph{gauge invariant uniqueness theorem} \cite{aHR} says that $\pi_{Q,T}$ is injective if and only if there is an action $\beta$ of $\T$ on the $C^*$-algebra generated by $\{Q,T\}$ satisfying $\beta_z(Q_v)=Q_v$ and $\beta_z(T_e)=zT_e$, for all $z\in \T$, $v\in E^0$, $e\in E^1$, and $Q_v\ne 0$ for all $v\in E^0$.

If $\mu=\mu_1\cdots\mu_n\in E^n$ and $n\ge 2$, then we let $s_\mu:=s_{\mu_1}\cdots s_{\mu_n}$. Likewise, we let $s_v:=p_v$ if $v\in E^0$. Then $C^*(E)=\clsp\{s_\mu s_\nu^*:\mu,\nu\in E^*,\ r(\mu)=r(\nu)\}$. The $C^*$-subalgebra $\DD(E):=\clsp\{s_\mu s_\mu^*:\mu\in E^*\}$ of $C^*(E)$ is a maximal abelian subalgebra if and only if every loop in $E$ has an exit (see \cite[Example 3.3]{NR}).

\subsection{The boundary path space of a graph}

An \emph{infinite path} in $E$ is an infinite sequence $x_1x_2\dots $ of edges in $E$ such that $r(e_i)=s(e_{i+1})$ for all $i$. We let $E^\infty$ be the set of all infinite paths in $E$. The source map extends to $E^\infty$ in the obvious way. We let $|x|=\infty$ for $x\in E^\infty$. The \emph{boundary path space} of $E$ is the space 
\begin{equation*}
	\partial E:=E^\infty\cup\{\mu\in E^*:r(\mu)\in E^0_\sing\}.
\end{equation*} 
If $\mu=\mu_1\mu_2\cdots\mu_m\in E^*$, $x=x_1x_2\cdots\in E^\infty$ and $r(\mu)=s(x)$, then we let $\mu x$ denote the infinite path  $\mu_1\mu_2\cdots\mu_m x_1x_2\cdots \in E^\infty$.

For $\mu\in E^*$, the \emph{cylinder set} of $\mu$ is the set
\begin{equation*}
	Z(\mu):=\{\mu x\in\partial E:x\in r(\mu)\partial E\},
\end{equation*} 
where $r(\mu)\partial E:=\{x \in \partial E : r(\mu)=s(x)\}$. Given $\mu\in E^*$ and a finite subset $F\subseteq r(\mu)E^1$ we define
\begin{equation*}
	Z(\mu\setminus F):=Z(\mu)\setminus\left(\bigcup_{e\in F}Z(\mu e)\right).
\end{equation*}

The boundary path space $\partial E$ is a locally compact Hausdorff space with the topology given by the basis $\{Z(\mu\setminus F): \mu\in E^*,\ F\text{ is a finite subset of }r(\mu)E^1\}$, and each such $Z(\mu\setminus F)$ is compact and open (see \cite[Theorem 2.1 and Theorem 2.2]{Web}). Moreover, \cite[Theorem 3.7]{Web} shows that there is a unique homeomorphism $h_E$ from $\partial E$ to the spectrum of $\DD(E)$ given by
\begin{equation}\label{he_map}
	h_E(x)(s_\mu s_\mu^*)=\begin{cases}
		1&\text{if }x\in Z(\mu),\\
		0&\text{if }x\notin Z(\mu).
	\end{cases}
\end{equation}

Our next lemma gives a description of the topology on the boundary path space, which we will need in the proof of Proposition \ref{prop:pseudo}.

\begin{lemma}\label{lem:open}
	Every nonempty open subset of $\partial E$ is the disjoint union of sets that are both compact and open.
\end{lemma}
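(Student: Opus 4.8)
The plan is to use the basis of the topology directly. Recall that $\partial E$ has a basis consisting of sets of the form $Z(\mu \setminus F)$ where $\mu \in E^*$ and $F$ is a finite subset of $r(\mu)E^1$, and that each such set is compact and open. So given a nonempty open set $U \subseteq \partial E$, I can write $U = \bigcup_{i} Z(\mu_i \setminus F_i)$ for some (countable, since $E^*$ is countable) collection of basic open sets. The issue is that this union need not be disjoint. The standard trick is to disjointify: enumerate the basic sets as $V_1, V_2, \dots$ and replace $V_n$ by $V_n \setminus (V_1 \cup \dots \cup V_{n-1})$. These are pairwise disjoint with the same union, so it suffices to show that each $V_n \setminus (V_1 \cup \dots \cup V_{n-1})$ is again a disjoint union of compact open sets — or, more simply, it suffices to show that the compact open subsets of $\partial E$ are closed under the set-theoretic operations needed, i.e. that the difference of two compact open sets is a disjoint union of compact open sets.

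\begin{proof}
Since $\partial E$ has a basis of compact open sets and $E^*$ is countable, every nonempty open set $U$ can be written as a countable union $U = \bigcup_{n\ge 1} V_n$ with each $V_n$ compact and open. Setting $W_n := V_n \setminus (V_1 \cup \dots \cup V_{n-1})$ we obtain pairwise disjoint sets with $U = \bigsqcup_{n\ge 1} W_n$. It therefore suffices to show that each $W_n$ is a disjoint union of compact open sets; and since $W_n = V_n \cap (\partial E \setminus V_1) \cap \dots \cap (\partial E \setminus V_{n-1})$ with $V_n$ compact open and each $\partial E \setminus V_i$ open (as $V_i$ is closed), it in fact suffices to prove the following: whenever $K$ is a compact open set and $O$ is an open set, $K \cap O$ is a disjoint union of compact open sets. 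For this, cover $K \cap O$ by basic compact open sets $Z(\mu\setminus F) \subseteq K\cap O$; since $K$ is compact and $K\cap O$ is clopen in $K$, finitely many such basic sets $B_1, \dots, B_m$ cover $K\cap O$. Then $K \cap O = B_1 \sqcup (B_2\setminus B_1) \sqcup \dots \sqcup (B_m \setminus (B_1\cup\dots\cup B_{m-1}))$, so it remains only to check that a finite Boolean combination of basic compact open sets $Z(\mu\setminus F)$ is again a finite disjoint union of such sets. Each $Z(\mu\setminus F) = Z(\mu)\setminus \bigcup_{e\in F}Z(\mu e)$, and for cylinder sets $Z(\mu)$, $Z(\nu)$ one has: $Z(\mu)\cap Z(\nu)$ is empty unless one of $\mu,\nu$ extends the other, in which case it equals the smaller cylinder; and $Z(\mu)\setminus Z(\nu)$, when $\nu = \mu\lambda$ extends $\mu$, can be written as a finite disjoint union $\bigsqcup Z(\mu\lambda_1\cdots\lambda_{k}\setminus\{\lambda_{k+1}\}) \sqcup \cdots$ by peeling off edges along $\lambda$, each piece compact open. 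Thus the family of finite disjoint unions of basic compact open sets is closed under intersection and relative complement, which completes the argument.
\end{proof}

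\textbf{The main obstacle} I expect is the bookkeeping in the last step: verifying concretely that the collection of finite disjoint unions of sets $Z(\mu\setminus F)$ forms a ring of sets (closed under intersections and differences). The key combinatorial facts are that two cylinder sets are either disjoint or nested, and that removing a sub-cylinder from a cylinder can be expressed by successively excluding initial edges along the extending path — each of these is elementary, but writing it cleanly requires care with the finite exclusion sets $F$. Everything else is the routine disjointification argument, using crucially that $\partial E$ has a \emph{countable} basis of \emph{compact} open sets, which is exactly what \cite[Theorems 2.1 and 2.2]{Web} provide.
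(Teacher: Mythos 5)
Your overall strategy --- enumerate a countable cover of $U$ by compact open basic sets $V_1,V_2,\dots$ and disjointify via $W_n:=V_n\setminus(V_1\cup\dots\cup V_{n-1})$ --- is sound and genuinely different from the paper's argument, but the second half of your proof contains a step that does not work as you justify it. Having written $W_n=V_n\cap(\partial E\setminus V_1)\cap\dots\cap(\partial E\setminus V_{n-1})$, you throw away the information that each $V_i$ is compact \emph{and open} and reduce to the claim that $K\cap O$ is a disjoint union of compact open sets for $K$ compact open and $O$ an \emph{arbitrary} open set; you then assert that $K\cap O$ is clopen in $K$ and extract a finite subcover. That assertion is false in general: $K\cap O$ is open in $K$ but need not be closed in $K$. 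For instance, if $E$ has one vertex and two loops, $\partial E$ is a Cantor set; taking $K=\partial E$ and $O$ the complement of a single point, $K\cap O$ is not compact and cannot be covered by finitely many compact open subsets of itself. So the finite-subcover step, and the ring-of-sets bookkeeping that follows it, fails at the level of generality you set up.

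The repair is immediate and makes the entire second half of your proof unnecessary: each $V_i$ is compact and open in a Hausdorff space, hence clopen, so $W_n$ is a finite intersection of clopen sets and is therefore clopen; being a closed subset of the compact set $V_n$, it is also compact. Thus $U=\bigcup_n W_n$ (discarding the empty $W_n$) is already the desired decomposition, with no need to analyse Boolean combinations of the sets $Z(\mu\setminus F)$. For comparison, the paper argues quite differently: for each $x\in U$ it constructs a canonical largest basic set $Z(\mu_x\setminus F_x)$ with $x\in Z(\mu_x\setminus F_x)\subseteq U$ (shortest admissible $\mu$, then smallest admissible $F$) and checks that any two such canonical sets are either equal or disjoint. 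The corrected version of your argument is arguably cleaner and shows the lemma is a general fact about second-countable Hausdorff spaces with a basis of compact open sets, whereas the paper's proof is tied to the combinatorics of the cylinder-set basis.
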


\begin{proof}
Let $U$ be a nonempty open subset of $\partial E$. For each $x\in U$ let 
\[
B_x:=\{(\mu,F): \mu\in E^*,\ F\text{ is a finite subset of }r(\mu)E^1,\ x\in Z(\mu\setminus F)\subseteq U\}.
\]
If $(\mu,F)\in B_x$, then $x\in Z(\mu)$ and $x\notin Z(\mu e)$ for each $e\in F$. Let $\mu_x$ be the shortest $\mu\in E^*$ such that $(\mu,F)\in B_x$ for some finite subset $F$ of $r(\mu)E^1$, and let $F_x:=\cap\{F: (\mu_x,F)\in B_x\}$. Then $(\mu_x,F_x)\in B_x$ and $Z(\mu\setminus F)\subseteq Z(\mu_x\setminus F_x)$ for all $(\mu,F)\in B_x$. It follows that if $x,y\in U$, then either $Z(\mu_x\setminus F_x)=Z(\mu_y\setminus F_y)$ or $Z(\mu_x\setminus F_x)\cap Z(\mu_y\setminus F_y)=\emptyset$. Since $U=\cup_{x\in U}Z(\mu_x\setminus F_x)$ and each $Z(\mu_x\setminus F_x)$ is open and compact, this shows that $U$ is the disjoint union of sets that are both compact and open.
\end{proof}

For $n\in\N$, let $\partial E^{\ge n}:=\{x\in\partial E: |x|\ge n\}$. Then $\partial E^{\ge n}= \cup_{\mu \in E^n} Z(\mu)$ is an open subset of $\partial E$. We define the \emph{shift map} on $E$ to be the map $\sigma_E:\partial E^{\ge 1}\to\partial E$ given by $\sigma_E(x_1x_2x_3\cdots)=x_2x_3\cdots$ for $x_1x_2x_3\cdots\in\partial E^{\ge 2}$ and $\sigma_E(e)=r(e)$ for $e\in\partial E\cap E^1$. For $n\ge 1$, we let $\sigma_E^n$ be the $n$-fold composition of $\sigma_E$ with itself. We let $\sigma_E^0$ denote the identity map on $\partial E$. Then $\sigma_E^n$ is a local homeomorphism for all $n\in\N$. When we write $\sigma_E^n(x)$, we implicitly assume that $x\in\partial E^{\ge n}$.  

We say that $x\in\partial E$ is \emph{eventually periodic} if there are $m,n\in\N$, $m\ne n$ such that $\sigma_E^m(x)=\sigma_E^n(x)$. Notice that $x\in\partial E$ is eventually periodic if and only if $x=\mu\nu\nu\nu\cdots$ for some path $\mu\in E^*$ and some loop $\nu\in E^*$ with $s(\nu)=r(\mu)$. By replacing $\nu$ by a subloop if necessary, we can assume that $\nu$ is a simple loop.

\subsection{Graph groupoids}

In \cite{KPRR}, Kumjian, Pask, Raeburn, and Renault defined groupoid $C^*$-algebras associated to a locally-finite directed graph with no sources. Their construction has been generalized to compactly aligned topological $k$-graphs in \cite{Yee}. We will now explain this construction in the case that $E$ is an arbitrary graph. The resulting groupoid is isomorphic to the one constructed by Paterson in \cite{Pat}. Let  
\begin{equation*}
	\grp_E:=\{(x,m-n,y): x,y\in\partial E,\ m,n\in\N,\text{ and } \sigma^m(x)=\sigma^n(y)\},
\end{equation*}
with product $(x,k,y)(w,l,z):=(x,k+l,z)$ if $y=w$ and undefined otherwise, and inverse given by $(x,k,y)^{-1}:=(y,-k,x)$.
With these operations $\grp_E$ is a groupoid (cf. \cite[Lemma 2.4]{KPRR}). The unit space $\grp_E^0$ of $\grp_E$ is $\{(x,0,x):x\in\partial E\}$ which we will freely identify with $\partial E$ via the map $(x,0,x)\mapsto x$ throughout the paper. We then have that the range and source maps $r,s: \grp_E \to \partial E$ are given by $r(x,k,y)=x$ and $s(x,k,y)=y$.

We now define a topology on $\grp_E$. Suppose $m,n\in\N$ and $U$ is an open subset of $\partial E^{\ge m}$ such that the restriction of $\sigma_E^m$ to $U$ is injective, $V$ is an open subset of $\partial E^{\ge n}$ such that the restriction of $\sigma_E^n$ to $V$ is injective, and that $\sigma_E^m(U)=\sigma_E^n(V)$, then we define
\begin{equation}\label{grp_topology}
	Z(U,m,n,V):=\{(x,k,y)\in\grp_E:x\in U,\ k=m-n,\ y\in V,\ \sigma_E^m(x)=\sigma_E^n(y)\}.
\end{equation}
Then $\grp_E$ is a locally compact, Hausdorff, \'{e}tale topological groupoid with the topology generated by the basis consisting of sets $Z(U,m,n,V)$ described in \eqref{grp_topology}, see \cite[Proposition 2.6]{KPRR} for an analogous situation. For $\mu,\nu \in E^*$ with $r(\mu)=r(\nu)$, let $Z(\mu,\nu):=Z(Z(\mu),|\mu|,|\nu|,Z(\nu))$. It follows that each $Z(\mu,\nu)$ is compact and open, and that the topology $\partial E$ inherits when we consider it as a subset of $\grp_E$ by identifying it with $\{(x,0,x):x\in\partial E\}$ agrees with the topology described in the previous section. Notice that for all $\mu,\nu \in E^*$, $U$ a compact open subset of $Z(\mu)$, and $V$ a compact open subset of $Z(\nu)$, the collection $\{Z(U,|\mu|,|\nu|,V): \sigma_E^{|\mu|}(U)=\sigma_E^{|\nu|}(V)\}$ is a basis for the topology of $\grp_E$. According to \cite[Proposition 6.2]{Yee}, $\grp_E$ is topologically amenable in the sense of \cite[Definition 2.2.8]{AR}. It follows from \cite[Proposition 3.3.5]{AR} and \cite[Proposition 6.1.8]{AR} that the reduced and universal $C^*$-algebras of $\grp_E$ are equal, and we denote this $C^*$-algebra by $C^*(\grp_E)$.

\begin{prop}[{Cf. \cite[Proposition 4.1]{KPRR}}]\label{prop:groupoid}
Suppose $E$ is a graph. Then there is a unique isomorphism $\pi:C^*(E) \to C^* 
(\grp_E)$ such that $\pi(p_v)=1_{Z(v,v)}$ for all $v \in E^0$ and 
$\pi(s_e)=1_{Z(e,r(e))}$ for all $e \in E^1$, and such that 
$\pi(\mathcal{D}(E))=C_0(\grp_E^0)$.
\end{prop}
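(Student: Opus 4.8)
The plan is to verify the hypotheses of an Huef and Raeburn's gauge-invariant uniqueness theorem for the candidate Cuntz--Krieger $E$-family in $C^*(\grp_E)$, and then check separately that the resulting isomorphism carries $\DD(E)$ onto $C_0(\grp_E^0)$. First I would set $Q_v := 1_{Z(v,v)}$ and $T_e := 1_{Z(e,r(e))}$ and check that $\{Q,T\}$ is a Cuntz--Krieger $E$-family in $C^*(\grp_E)$. This is a computation with the convolution product and involution on $C_c(\grp_E)$: one has $1_{Z(\mu,\nu)}^* = 1_{Z(\nu,\mu)}$ and $1_{Z(\mu,\nu)} 1_{Z(\alpha,\beta)} = 1_{Z(\mu\alpha',\beta)}$ when $\nu = \alpha\alpha'$ (and similar cases), from which (CK1) $T_e^* T_e = 1_{Z(r(e),r(e))} = Q_{r(e)}$ and (CK2) $T_e T_e^* = 1_{Z(e,e)} \le 1_{Z(s(e),s(e))} = Q_{s(e)}$ follow directly, the inequality because $Z(e,e) \subseteq Z(s(e),s(e))$. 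For (CK3), for $v \in E^0_\reg$ the sets $Z(e,e)$, $e \in vE^1$, are pairwise disjoint compact open subsets of $\grp_E^0$ whose union is $Z(v,v)$ (every boundary path with source $v$ extends by exactly one edge of $vE^1$ since $v$ is regular), so $\sum_{e \in vE^1} T_e T_e^* = 1_{Z(v,v)} = Q_v$. By universality we obtain a homomorphism $\pi := \pi_{Q,T} : C^*(E) \to C^*(\grp_E)$.

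Next I would establish surjectivity of $\pi$. The range of $\pi$ contains all $1_{Z(\mu,\nu)}$ with $r(\mu) = r(\nu)$, since $\pi(s_\mu s_\nu^*) = T_\mu T_\nu^* = 1_{Z(\mu,\nu)}$ by iterating the multiplication rule. By the description of the basis of $\grp_E$ recorded in the excerpt, every basic open set $Z(U,|\mu|,|\nu|,V)$ with $U,V$ compact open in $Z(\mu),Z(\nu)$ is a finite disjoint union of sets of the form $Z(\mu',\nu')$ with $\mu' \in E^*\mu$-type extensions, and hence its indicator lies in $\range\pi$; since these indicators span a dense subalgebra of $C^*(\grp_E)$ (they span $C_c(\grp_E)$, which is dense), $\pi$ is surjective.

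For injectivity I would invoke the gauge-invariant uniqueness theorem. There is a continuous action $\beta$ of $\T$ on $C^*(\grp_E)$ coming from the continuous cocycle $c : \grp_E \to \Z$, $c(x,k,y) = k$: concretely $\beta_z(f)(x,k,y) = z^k f(x,k,y)$ on $C_c(\grp_E)$, which extends to an action on $C^*(\grp_E)$. Then $\beta_z(Q_v) = Q_v$ since $Q_v$ is supported on the kernel $c^{-1}(0)$, and $\beta_z(T_e) = z T_e$ since $T_e$ is supported on $c^{-1}(1)$. Finally $Q_v = 1_{Z(v,v)} \ne 0$ because $Z(v,v)$ is a nonempty (it contains $(v,0,v)$ as $v \in \partial E$ when... more carefully, $v\partial E \ne \emptyset$ always) open subset of $\grp_E^0$, so the indicator is a nonzero element of $C_0(\grp_E^0) \subseteq C^*(\grp_E)$. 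Hence $\pi$ is injective, so it is an isomorphism. Uniqueness of $\pi$ with the stated values on generators is immediate since $C^*(E)$ is generated by $\{p_v, s_e\}$.

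It remains to check $\pi(\DD(E)) = C_0(\grp_E^0)$. The inclusion $\subseteq$ is clear: $\pi(s_\mu s_\mu^*) = 1_{Z(\mu,\mu)}$ and $Z(\mu,\mu) \subseteq \grp_E^0$, so $\pi$ maps the spanning set of $\DD(E)$ into $C_0(\grp_E^0)$, and $C_0(\grp_E^0)$ is closed. For $\supseteq$, recall that under the identification $\grp_E^0 \cong \partial E$ the sets $Z(\mu,\mu)$ correspond to the cylinder sets $Z(\mu)$, and more generally $Z(\mu\setminus F)$ corresponds to $Z(\mu,\mu) \setminus \bigcup_{e \in F} Z(\mu e, \mu e)$, whose indicator is $\pi(s_\mu s_\mu^* - \sum_{e\in F} s_{\mu e}s_{\mu e}^*) \in \pi(\DD(E))$. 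Since the sets $Z(\mu \setminus F)$ form a basis of compact open sets for $\partial E$ and $\partial E$ is locally compact Hausdorff, their indicators generate $C_0(\partial E) = C_0(\grp_E^0)$ as a $C^*$-algebra (by Stone--Weierstrass, the $*$-subalgebra they span is dense), and each such indicator lies in $\pi(\DD(E))$, which is closed; hence $C_0(\grp_E^0) \subseteq \pi(\DD(E))$. This gives the claimed equality. The only mildly delicate point is the bookkeeping showing every basic groupoid open set decomposes into finitely many $Z(\mu',\nu')$ pieces, i.e.\ surjectivity; everything else is a routine translation between the relations (CK1)--(CK3) and the groupoid convolution algebra, so I expect the decomposition argument for surjectivity to be the main thing to get right.
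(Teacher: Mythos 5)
Your proposal follows essentially the same route as the paper: verify that $Q_v=1_{Z(v,v)}$, $T_e=1_{Z(e,r(e))}$ form a Cuntz--Krieger $E$-family, invoke universality, prove injectivity via the gauge-invariant uniqueness theorem using the $\T$-action induced by the cocycle $(x,k,y)\mapsto k$, prove surjectivity by showing the indicators of basic open bisections lie in the range, and finish the diagonal statement with Stone--Weierstrass. All of that matches the paper's proof, which outsources the family verification and surjectivity to the computations of Kumjian--Pask--Raeburn--Renault.

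One step, as literally stated, is wrong for general graphs: the claim that every basic open set $Z(U,|\mu|,|\nu|,V)$ with $U,V$ compact open is a \emph{finite disjoint union of sets of the form $Z(\mu',\nu')$}. If $r(\mu)$ is an infinite emitter, a compact open subset of $Z(\mu)$ such as $Z(\mu\setminus F)$ is not a finite union of cylinder sets, so the decomposition fails. The repair is immediate and already implicit in your last paragraph: write $1_{Z(U,|\mu|,|\nu|,V)}=1_U * 1_{Z(\mu,\nu)} * 1_V$, where $1_U$ and $1_V$ are indicators of compact open subsets of $\grp_E^0$ and hence finite linear combinations of indicators of sets of the form $Z(\lambda\setminus F)$, each of which equals $\pi\bigl(s_\lambda s_\lambda^*-\sum_{e\in F}s_{\lambda e}s_{\lambda e}^*\bigr)$. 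Since $1_{Z(\mu,\nu)}=\pi(s_\mu s_\nu^*)$, the product lies in $\range\pi$, and these indicators span a dense subspace of $C^*(\grp_E)$. With that substitution the argument is complete.
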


\begin{proof}
Using calculations along the lines of those used in the proof of \cite[Proposition 4.1]{KPRR}, it is straight forward to check that \[
\{Q,T\}:=\{Q_v:=1_{Z(v,v)} \text{ and } T_e:=1_{Z(e,r(e))} : v \in E^0, e \in E^1\}
\]
is a Cuntz-Krieger $E$-family. The universal property of $\{p,s\}$ implies 
that there is a $*$-homomorphism $\pi:=\pi_{Q,T}:C^*(E)\to C^* (\grp_E)$ 
satisfying $\pi(p_v)=Q_v$ and $\pi(s_e)=T_e$. An argument similar to the one 
used in the proof of \cite[Proposition 4.1]{KPRR} shows that $C^* (\grp_E)$ is 
generated by $\{Q,T\}$, so $\pi$ is surjective. The cocycle $(x,k,y)\mapsto k$ 
induces an action $\beta$ of $\T$ on $C^* (\grp_E)$ satisfying 
$\beta_z(Q_v)=Q_v$ and $\beta_z(T_e)=zT_e$, for all $z\in \T$, $v\in E^0$, 
$e\in E^1$ (see \cite[Proposition II.5.1]{Ren}), and since $Q_v=1_{Z(v,v)}\ne 
0$ for all $v\in E^0$, the gauge invariant uniqueness theorem of $C^*(\grp_E)$ 
(\cite[Theorem 2.1]{BHRS}) implies that $\pi$ is injective. Since 
$\mathcal{D}(E)$ is generated by $\{s_\mu s_\mu^*:\mu\in E^*\}$ and $\pi(s_\mu 
s_\mu^*)=1_{Z(\mu,\mu)}$, we have that $\pi$ maps $\mathcal{D}(E)$ into 
$C_0(\grp_E^0)$. An application of the Stone-Weierstrass theorem implies that 
$C_0(\grp_E^0)$ is generated by $\{1_{Z(\mu,\mu)}:\mu\in E^*\}$. Hence 
$\pi(\mathcal{D}(E))=C_0(\grp_E^0)$.
\end{proof}

Suppose $\grp$ is a groupoid, the isotropy group of $x\in \grp^0$ is the group $\iso(x):=\{\gamma\in\grp:s(\gamma)=r(\gamma)=x\}$. In \cite{Ren2}, an \'{e}tale groupoid is said to be \emph{topologically principal} if the set of points of $\grp^0$ with trivial isotropy group is dense. We will now characterize when $\grp_E$ is topologically principal.

\begin{prop} \label{prop:topological principal}
	Let $E$ be a graph. Then the graph groupoid $\grp_E$ is topologically principal if and only if every loop in $E$ has an exit.
\end{prop}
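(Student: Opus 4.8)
The plan is to characterize the points of $\grp_E^0 = \partial E$ with trivial isotropy and show that they are dense precisely when condition (L) holds. First I would identify the isotropy group of a point $x \in \partial E$. An element $(x,k,x) \in \iso(x)$ with $k = m-n$ requires $\sigma_E^m(x) = \sigma_E^n(x)$; in particular, if $x$ has nontrivial isotropy then $x$ is eventually periodic (taking $k \neq 0$), or more subtly $k = 0$ is forced but some $m \neq n$ still give $\sigma_E^m(x) = \sigma_E^n(x)$, which again means $x$ is eventually periodic. Conversely, writing an eventually periodic $x = \mu\nu\nu\nu\cdots$ with $\nu$ a simple loop, one checks that $(x, |\nu|, x) \in \iso(x)$ is a nontrivial element. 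So the key reduction is: $x$ has trivial isotropy if and only if $x$ is \emph{not} eventually periodic.

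Next I would handle the two directions. For the forward direction, suppose some loop $\nu$ in $E$ has no exit. Then I claim there is a nonempty open set in $\partial E$ consisting entirely of eventually periodic points, so the topologically principal points cannot be dense. Concretely, since $\nu = \nu_1 \cdots \nu_p$ has no exit, for each vertex $s(\nu_i)$ on the loop the only edge emitted is $\nu_i$; hence every $x \in Z(s(\nu))$ is forced to follow the loop forever, giving $x = \nu\nu\nu\cdots$, and similarly $Z(\nu_i \nu_{i+1} \cdots)$ type cylinders are singletons of eventually periodic paths. More carefully, $Z(s(\nu)) = \{\nu^\infty\}$ (a single point, since no exit and the loop vertices are in $E^0_{\reg}$), so $Z(s(\nu))$ is a nonempty open set whose only point is eventually periodic; any open set meeting it contains this isolated periodic point, and one shows no non-eventually-periodic point lies in a neighborhood forcing failure of density. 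I would argue that the open set $\bigcup_i Z(\nu_i \nu_{i+1}\cdots\nu_p)$, or simply the isolated point $Z(s(\nu))$, witnesses that the trivial-isotropy points are not dense.

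For the reverse direction, assume every loop has an exit; I must show the non-eventually-periodic points are dense in $\partial E$. Given a basic open set $Z(\mu \setminus F)$, I need to produce a non-eventually-periodic boundary path in it. Starting from $\mu$, I extend: at each regular vertex I have a choice of edges, and because every loop has an exit I can always avoid committing to a periodic tail — whenever I return to a vertex I have visited, condition (L) guarantees an alternative edge, so by a greedy/diagonalization construction I build an infinite path that never becomes periodic (for instance, make the blocks between repeated vertices strictly grow, or ensure the sequence of edges is aperiodic). If instead the process is forced to terminate at a singular vertex, that terminating finite path lies in $\partial E$ and is trivially not eventually periodic. Either way $Z(\mu\setminus F)$ contains a trivial-isotropy point.

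The main obstacle is the reverse direction: rigorously constructing a non-eventually-periodic path inside an arbitrary basic open set $Z(\mu \setminus F)$ using only condition (L). The subtlety is that (L) gives an exit \emph{somewhere} on each loop, not at a prescribed vertex, so the construction must track which vertices have been revisited and invoke the exit at the right moment; I would organize this as an inductive procedure that at stage $k$ guarantees the path built so far cannot extend to anything periodic with period $\le k$, being careful at singular vertices where the path may simply stop (which is still a legitimate, non-periodic element of $\partial E$).
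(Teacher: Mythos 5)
Your reduction to ``trivial isotropy $\iff$ not eventually periodic'' and your treatment of the direction where some loop $\nu$ has no exit are both correct and match the paper: since no exit means each vertex on $\nu$ emits exactly one edge, $Z(s(\nu))=\{\nu^\infty\}$ is a nonempty open set consisting of a single eventually periodic point, so the trivial-isotropy points are not dense. (A minor quibble: your ``more subtly $k=0$ is forced'' case is vacuous, since $k=m-n=0$ forces $m=n$; only $k\ne 0$ witnesses nontrivial isotropy.)

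The genuine gap is in the direction ``condition (L) $\implies$ topologically principal,'' which you yourself flag as the main obstacle and then do not resolve. Your greedy construction as stated does not work: a regular vertex need not offer a choice of edges, and condition (L) only provides an exit \emph{somewhere} on each loop, so after riding around to the exit vertex and leaving the loop you may be funneled immediately into another loop, and nothing in your sketch prevents the ``periods to avoid'' from never being escapable. The missing idea is a dichotomy. Given a nonempty open $U$ in which (for contradiction) every point is eventually periodic, pick $x=\zeta_1\eta^\infty$ with $Z(\zeta_1\eta^k)\subseteq U$ and let $e$ be an exit of $\eta$ at $s(\eta_i)$. Either $r(e)E^*s(\eta)\ne\emptyset$, in which case $s(\eta)$ carries two \emph{distinct simple} loops $\alpha,\beta$ and the point $\zeta_1\eta^k\alpha\beta\alpha^2\beta\alpha^3\beta\cdots\in U$ is not eventually periodic (this is where your ``strictly growing blocks'' idea is actually justified --- but only after you have produced two distinct simple loops based at the \emph{same} vertex); or $r(e)E^*s(\eta)=\emptyset$, in which case you record the path $\zeta_2=\eta^k\eta_1\cdots\eta_{i-1}e$ and repeat inside $Z(\zeta_1\zeta_2)\subseteq U$. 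If the second alternative occurs at every stage, the resulting infinite path $\zeta_1\zeta_2\zeta_3\cdots$ satisfies $r(\zeta_{n+1})E^*r(\zeta_n)=\emptyset$ for all $n$, hence visits each vertex only finitely often, hence is not eventually periodic, again a contradiction. Without this two-case analysis (or an equivalent one), the inductive procedure you describe --- ``at stage $k$ rule out periods $\le k$'' --- has no justification that the required deviation is ever available, so as written the proof of this direction is incomplete.
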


\begin{proof}
	Let $x\in \partial E$. We claim that $(x,0,x)$ has nontrivial isotropy group if and only if $x$ is eventually periodic. Indeed, suppose $(x,m-n,x) \in \iso(x)$ with $m\neq n$, then $\sigma^m(x)=\sigma^n(x)$ and $x$ is eventually periodic. On the other hand, suppose $x=\mu \lambda^\infty$, then $(x,(|\mu|+|\lambda|)-|\mu|,x) \in \iso(x)$, proving the claim.
Now observe that if $v$ is a vertex such that there are two different simple loops $\alpha$ and $\beta$ with $s(\alpha)=s(\beta)=v$, then any cylinder set $Z(\delta)$ for which $r(\delta)E^*v\ne\emptyset$ contains a $y$ such that $(y,0,y)$ has trivial isotropy. To see this, pick $\lambda \in r(\delta)E^*v$, then $y=\delta\lambda\alpha\beta\alpha^2\beta\alpha^3 \beta \cdots$ has trivial isotropy since it is not eventually periodic.
	
Assume that every loop in $E$ has an exit and suppose for contradiction that $U$ is an nonempty open subset of $\partial E$ such that $(x,0,x)$ has nontrivial isotropy group for every $x\in U$. Note that $U \subseteq E^\infty$ since $y \in \partial E$ with $|y| < \infty$ implies that the isotropy group of $(y,0,y)$ is trivial. Let $x\in U$. Since $x$ has nontrivial isotropy group, there exist $\zeta_1\in E^*$ and a loop $\eta$ such that $\zeta_1\eta^\infty \in Z(\zeta_1\eta^k) \subseteq U$ for some $k \in \N$. Since $\eta$ has an exit and $(x,0,x)$ has nontrivial isotropy group for every $x\in U$, it follows that there is a $\zeta_2\in r(\zeta_1)E^*$ such that $Z(\zeta_1\zeta_2)\subseteq U$ and such that $r(\zeta_2)E^*r(\zeta_1)=\emptyset$, for otherwise there would be two distinct simple loops based at $r(\zeta_1)$. By repeating this argument we get a sequences of paths $\zeta_1,\zeta_2,\zeta_3,\dots$ such that $s(\zeta_{n+1})=r(\zeta_n)$, $r(\zeta_{n+1})E^*r(\zeta_n)=\emptyset$ and $Z(\zeta_1\zeta_2\dots\zeta_n)\subseteq U$ for all $n$. The element $y=\zeta_1\zeta_2\zeta_3\dots$ then belongs to $U$, but since it only visits each vertex a finite number of times, $(y,0,y)$ must have trivial isotropy, which contradicts the assumption that $(x,0,x)$ has nontrivial isotropy group for every $x\in U$. Thus, $\grp_E$ is topologically principal if every loop in $E$ has an exit.
	
	Conversely, if $\mu$ is a loop without exit and $x=\mu\mu\mu\dots$, then $(x,0,x)$ is an isolated point in $\grp_E^0$ with nontrivial isotropy group. Thus, $\grp_E$ is not topologically principal if there is a loop in $E$ without an exit.
\end{proof}

Since the reduced and universal $C^*$-algebras of $\grp_E$ are equal, it follows from \cite[Proposition II.4.2(i)]{Ren} that we can regard $C^*(\grp_E)$ as a subset of $C_0(\grp_E)$. For $f\in C^*(\grp_E)$ and $j\in\Z$, we let $\Phi_j(f)$ denote the restriction of $f$ to $\{(x,k,y)\in\grp_E:k=j\}$, and for $m\in\N$ we let $\Sigma_m(f):=\sum_{j=-m}^m(1-\frac{|j|}{m+1})\Phi_j(f)$.

\begin{prop} \label{prop:cesaro}
	Let $E$ be a graph and let $f\in C^*(\grp_E)$. Then each $\Phi_k(f)$ and each $\Sigma_m(f)$ belong to $C^*(\grp_E)$, and $(\Sigma_m(f))_{m\in\N}$ converges to $f$ in $C^*(\grp_E)$.
\end{prop}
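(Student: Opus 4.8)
The plan is to recognize $\Phi_j$ and $\Sigma_m$ as averages of the gauge action against, respectively, the characters of $\T$ and the Fej\'er kernels, and then to invoke the standard approximate-identity machinery on $\T$. Let $\beta$ be the (strongly continuous) action of $\T$ on $C^*(\grp_E)$ induced by the cocycle $(x,k,y)\mapsto k$, as in the proof of Proposition~\ref{prop:groupoid}. Regarding $C^*(\grp_E)$ as a subspace of $C_0(\grp_E)$ via \cite[Proposition~II.4.2(i)]{Ren}, the construction of $\beta$ from the cocycle (cf.\ \cite[Proposition~II.5.1]{Ren}), together with continuity of the inclusion $C^*(\grp_E)\subseteq C_0(\grp_E)$, shows that $\beta_z$ acts pointwise by $(\beta_z f)(x,k,y)=z^k f(x,k,y)$ for all $f\in C^*(\grp_E)$ and $(x,k,y)\in\grp_E$.

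For $j\in\Z$ set $\widetilde\Phi_j(f):=\int_\T \overline{z}^{\,j}\,\beta_z(f)\,dz$, the Bochner integral with respect to normalized Haar measure on $\T$. This is well-defined because $z\mapsto\overline z^{\,j}\beta_z(f)$ is a norm-continuous map $\T\to C^*(\grp_E)$, and since $C^*(\grp_E)$ is norm-closed, $\widetilde\Phi_j(f)\in C^*(\grp_E)$. Because evaluation at a point $(x,k,y)\in\grp_E$ is a bounded linear functional on $C_0(\grp_E)$, it commutes with the Bochner integral, so $\widetilde\Phi_j(f)(x,k,y)=\big(\int_\T \overline z^{\,j}z^k\,dz\big)f(x,k,y)=\delta_{j,k}\,f(x,k,y)$; that is, $\widetilde\Phi_j(f)$ is exactly the restriction of $f$ to $\{k=j\}$, so $\widetilde\Phi_j(f)=\Phi_j(f)$ and in particular $\Phi_j(f)\in C^*(\grp_E)$. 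Hence $\Sigma_m(f)=\sum_{j=-m}^m(1-\tfrac{|j|}{m+1})\Phi_j(f)\in C^*(\grp_E)$ as a finite linear combination.

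By linearity of the Bochner integral, $\Sigma_m(f)=\int_\T F_m(z)\,\beta_z(f)\,dz$, where $F_m(z)=\sum_{j=-m}^m(1-\tfrac{|j|}{m+1})\overline z^{\,j}$ is the Fej\'er kernel of order $m$: it satisfies $F_m\ge 0$, $\int_\T F_m\,dz=1$, and $\int_{\T\setminus W}F_m\,dz\to 0$ as $m\to\infty$ for every neighborhood $W$ of $1$ in $\T$. Given $\varepsilon>0$, use norm-continuity of $z\mapsto\beta_z(f)$ at $z=1$ (with $\beta_1=\id$) to pick $W$ with $\|\beta_z(f)-f\|<\varepsilon$ for $z\in W$; then
\[
\|\Sigma_m(f)-f\|=\Big\|\int_\T F_m(z)\big(\beta_z(f)-f\big)\,dz\Big\|\le\varepsilon\int_W F_m\,dz+2\|f\|\int_{\T\setminus W}F_m\,dz\le\varepsilon+2\|f\|\int_{\T\setminus W}F_m\,dz,
\]
so $\limsup_m\|\Sigma_m(f)-f\|\le\varepsilon$; as $\varepsilon$ was arbitrary, $\Sigma_m(f)\to f$ in $C^*(\grp_E)$.

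There is no real obstacle here; the content is the standard Fej\'er-mean argument for a strongly continuous circle action. The only points requiring a word of care are (i) that the pointwise formula for $\beta_z$ holds on all of $C^*(\grp_E)$, not merely on the dense $*$-subalgebra spanned by products of generators, and (ii) that the Bochner integrals above may be evaluated pointwise on $\grp_E$ — both of which follow because point evaluations on $C_0(\grp_E)$ are bounded linear functionals and hence commute with norm limits and with Bochner integration.
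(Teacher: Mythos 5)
Your proof is correct. The first half — identifying $\Phi_j(f)$ with the Bochner integral $\int_\T \overline z^{\,j}\beta_z(f)\,dz$ against the gauge action, justifying the pointwise evaluation via boundedness of point evaluations on $C_0(\grp_E)$, and deducing membership of $\Phi_j(f)$ and $\Sigma_m(f)$ in $C^*(\grp_E)$ — is essentially the same as the paper's argument. You diverge in the convergence step: the paper writes $\Sigma_m(f)=\int_\T\gamma_z(f)\sigma_m(z)\,dz$ only to extract the uniform bound $\|\Sigma_m(f)\|\le\|f\|$, and then proves convergence by an $\epsilon/3$ density argument — for $g\in C_c(\grp_E)$ only finitely many $\Phi_j(g)$ are nonzero, so $\Sigma_m(g)\to g$ by direct estimate, and the general case follows from density of $C_c(\grp_E)$ and contractivity of $\Sigma_m$. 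You instead invoke the full approximate-identity property of the Fej\'er kernels ($F_m\ge 0$, total mass $1$, mass concentrating at $1\in\T$) together with norm-continuity of $z\mapsto\beta_z(f)$, which yields $\Sigma_m(f)\to f$ in one stroke without passing through $C_c(\grp_E)$. Your route is slightly more self-contained at the $C^*$-level (it uses only strong continuity of the action) and is the standard Fej\'er-mean argument for any strongly continuous circle action; the paper's route leans on the concrete dense subalgebra $C_c(\grp_E)$ and needs less about the kernels (only positivity and normalization). Both are complete and correct.
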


\begin{proof}
	Let $j\in\Z$. The map $(x,k,y)\mapsto k$ is a continuous cocycle from $\grp_E$ to $\Z$. For each $z\in\T$ there is a unique automorphism $\gamma_z$ on $C^*(\grp_E)$ such that $\gamma_z(g)(x,k,y)=z^kg(x,k,y)$ for $g\in C^*(\grp_E)$ and $(x,k,y)\in\grp_E$, and that the map $z\mapsto\gamma_z$ is a strongly continuous action of $\T$ on $C^*(\grp_E)$ (see \cite[Proposition II.5.1]{Ren}). It follows that the integral $\int_\T\gamma_z(f)z^{-j}\ dz$, where $dz$ denotes the normalized Haar measure on $\T$, is well-defined and belongs to $C^*(\grp_E)$ (see for example \cite[Section C.2]{RW}). Let $(x,k,y)\in\grp_E$. If $k\ne j$, then 
	\begin{equation*}
		\int_\T\gamma_z(f)z^{-j}\ dz(x,k,y)=\int_\T z^{k-j}\ dz\ f(x,k,y)=0,
	\end{equation*}
	and if $k=j$, then 
	\begin{equation*}
		\int_\T\gamma_z(f)z^{-j}\ dz(x,k,y)=\int_\T z^{k-j}\ dz\ f(x,k,y)=f(x,k,y).
	\end{equation*}
	Thus, $\Phi_j(f)=\int_\T\gamma_z(f)z^{-j}\ dz$ from which it follows that $\Phi_j(f)\in C^*(\grp_E)$.
	
	Since each $\Sigma_m(f)$ is a linear combination of functions of the form $\Phi_j(f)$, each $\Sigma_m(f)$ belongs to $C^*(\grp_E)$.
	
	For $m\in\N$, let $\sigma_m:\T\to\R$ be the Fej\'er's kernel defined by 
\[ 
\sigma_m(z)=\sum_{j=-m}^m(1-\frac{|j|}{m+1})z^{-j}.
\]
Then $\sigma_m(z)\ge 0$ for all $z\in\T$, $\int_\T\sigma_m(z)\ dz=1$, and 
	\begin{align*}
		\Sigma_m(f)&=\sum_{j=-m}^m\left(1-\frac{|j|}{m+1}\right)\Phi_j(f)\\
		&=\sum_{j=-m}^m\left(1-\frac{|j|}{m+1}\right)\int_\T\gamma_z(f)z^{-j}\ dz
		=\int_\T\gamma_z(f)\sigma_m(z)\ dz.
	\end{align*}
	Thus
	\begin{equation*}
		\|\Sigma_m(f)\|\le\int_\T\|\gamma_z(f)\|\sigma_m(z) dz =\|f\|.
	\end{equation*}
	If $g\in C_c(\grp_E)$, then there is an $m_0\in\N$ such that $\Phi_j(g)=0$ for $|j|>m_0$. It follows that 
	\begin{align*}
		\|g-\Sigma_m(g)\|&=\Big\|g-\sum_{j=-m}^m\left(1-\frac{|j|}{m+1}\right)\Phi_j(g)\Big\| \\
		&\leq \Big\|g-\sum_{j=-m}^m \Phi_j(g)\Big\|+\Big\|\sum_{j=-m}^m\left(\frac{|j|}{m+1}\right)\Phi_j(g)\Big\| \\
		&\leq \Big\|\sum_{j=-m}^m\left(\frac{|j|}{m+1}\right)\Phi_j(g)\Big\| \\
		&\leq \Big\|\sum_{j=-m_0}^{m_0}\left(\frac{|j|}{m+1}\right)\Phi_j(g)\Big\| \quad \text{ for } m \geq m_0 \\
		&\le\sum_{j=-m_0}^{m_0}\frac{|j|}{m+1}\|\Phi_j(g)\|\to 0 \quad \text{ as $m\to\infty$.}
	\end{align*}
Thus, for any $\epsilon>0$ there exists $g\in C_c(\grp_E)$ and an $M\in\N$ such that $||f-g||<\epsilon/3$ and $||g-\Sigma_m(g)||<\epsilon/3$ for any $m\ge M$, and then 
	\begin{equation*}
		||f-\Sigma_m(f)||\le ||f-g||+||g-\Sigma_m(g)||+||\Sigma_m(g-f)||<\epsilon
	\end{equation*}
	for any $m\ge M$. This shows that $(\Sigma_m(f))_{m\in\N}$ converges to $f$ in $C^*(\grp_E)$.
\end{proof}

\section{Orbit equivalence and pseudogroups}\label{sec: oe and pseudo}

In this section we introduce the notion of orbit equivalence of two graphs, which is a natural generalisation of  Matsumoto's 
continuous orbit equivalence for topological Markov shifts from \cite{Mat}. We also define the pseudogroup of a graph using Renault's pseudogroups 
associated to groupoids \cite{Ren2}, and then show that two graphs are orbit 
equivalent if and only if their pseudogroups are isomorphic. 

\begin{definition}\label{def: cont orb equiv}
Two graphs $E$ and $F$ are {\em orbit equivalent} if there 
exist a homeomorphism $h:\partial E\to \partial F$ and continuous functions 
$k_1,l_1:\partial E^{\ge 1}\to \N$ and $k'_1,l'_1:\partial F^{\ge 1}\to \N$ 
such that 
\begin{equation}\label{eq: con orb equiv}
\sigma_F^{k_1(x)}(h(\sigma_E(x)))=\sigma_F^{l_1(x)}(h(x))\quad\text{and}\quad\sigma_E^{k'_1(y)}(h^{-1}(\sigma_F(y)))=\sigma_E^{l'_1(y)}(h^{-1}(y)),
\end{equation}
for all $x\in\partial E^{\ge 1}, y\in \partial F^{\ge 1}$.
\end{definition}

\begin{example}\label{ex: coe but not conjugacy}
Consider the graphs
\[
\begin{tikzpicture}
    \def\vertex(#1) at (#2,#3){
        \node[inner sep=0pt, circle, fill=black] (#1) at (#2,#3)
        [draw] {.}; 
 }
    \vertex(11) at (0,0)
    
    \vertex(21) at (1,0)
    
%    \vertex(31) at (2,0)
    
%    \vertex(41) at (3,0)
    
    \vertex(51) at (7,0)
    
    \vertex(61) at (8,0)
    
%    \vertex(71) at (9,0)
    
%    \vertex(81) at (10,0);
    
\node at (-0.6,0) {$E$};
\node at (6.6,0) {$F$};

%\node at (3.6,0) {$\dots$};
%\node at (10.6,0) {$\dots$};

\node at (0.5,-0.3) {$e_1$};
%\node at (1.5,-0.3) {$e_2$};
%\node at (2.5,-0.3) {$e_3$};
%\node at (8.5,-0.3) {$f_2$};
%\node at (9.5,-0.3) {$f_3$};

\node at (1,1) {$e_2$};

\node at (7.5,0.7) {$f_1$};
\node at (7.5,-0.7) {$f_2$};

\draw[style=semithick, -latex] (11.east)--(21.west);
%\draw[style=semithick, -latex] (21.east)--(31.west);
%\draw[style=semithick, -latex] (31.east)--(41.west);

\draw[style=semithick, -latex] (51.north east)
.. controls (7.25,0.5) and (7.75,0.5).. (61.north west);
\draw[style=semithick, -latex] (61.south west)
.. controls (7.75,-0.5) and (7.25,-0.5).. (51.south east);
%\draw[style=semithick, -latex] (61.east)--(71.west);
%\draw[style=semithick, -latex] (71.east)--(81.west);

\draw[style=semithick, -latex] (21.north east)
        .. controls (1.25,0.25) and (1.5,0.75) ..
        (1,0.75)
        .. controls (0.5,0.75) and (0.75,0.25) ..
        (21.north west);
        
\end{tikzpicture}
\]
Then $\partial E=\{e_1e_2e_2\dots,e_2e_2\dots\}$ and $\partial F=\{f_1f_2f_1f_2\dots,f_2f_1f_2f_1\dots\}$. The map $h:\partial E\to \partial F$ given by
\[
h(e_1e_2e_2\dots)=f_1f_2f_1f_2\dots\quad\text{and}\quad h(e_2e_2\dots)=f_2f_1f_2f_1\dots
\]
is a homeomorphism. Consider $k_1,l_1:\partial E^{\ge 1}\to\N$ given by 
$k_1(e_1e_2e_2\dots)=1$ and $k_1(e_2e_2\dots)=0$, and 
$l_1(e_1e_2e_2\dots)=0=l_1(e_2e_2\dots)$. Then $k_1$ and $l_1$ are continuous, 
and we have $\sigma_F^{k_1(x)}(h(\sigma_E(x)))=\sigma_F^{l_1(x)}(h(x))$ for 
all $x\in\partial E^{\ge 1}$. Similarly the functions $k'_1,l'_1:\partial 
F^{\ge 1}\to\N$ given by $k'_1(f_1f_2f_1f_2\dots)=0$ and 
$k'_1(f_2f_1f_2f_1\dots)=1$, and  $l'_1(f_1f_2f_1f_2\dots)=1$ and 
$l'_1(f_2f_1f_2f_1\dots)=0$, are continuous and satisfy
\[
\sigma_E^{k'_1(y)}(h^{-1}(\sigma_F(y)))=\sigma_E^{l'_1(y)}(h^{-1}(y))\quad\text{for
 all $y\in\partial F^{\ge 1}$}.
\]
Hence $E$ and $F$ are orbit equivalent.
\end{example}

Sections \ref{sec: Statement of the main result} and \ref{sec: applications} contain further examples of orbit equivalent graphs.

In Section 3 of \cite{Ren2}, Renault constructs for each \'{e}tale groupoid 
$\grp$ a pseudogroup in the following way: Define a \emph{bisection} to be a 
subset $A$ of $\grp$ such that the restriction of the source map of $\grp$ to 
$A$ and the restriction of the range map of $\grp$ to $A$ both are injective. 
The set of all open bisections of $\grp$ forms an inverse semigroup 
$\mathcal{S}$ with product defined by $AB=\{\gamma\gamma':(\gamma,\gamma')\in 
(A\times B)\cap \grp^{(2)}\}$ (where $\grp^{(2)}$ denote the set of 
composable pairs of $\grp$), and the inverse of $A$ is defined to be the image 
of $A$ under the inverse map of $\grp$. Each $A\in\mathcal{S}$ defines a 
unique homeomorphism $\alpha_A:s(A)\to r(A)$ such that 
$\alpha(s(\gamma))=r(\gamma)$ for $\gamma\in A$. The set 
$\{\alpha_A:A\in\mathcal{S}\}$ of partial homeomorphisms on $\grp^0$ is the 
pseudogroup of $\grp$. 

When $E$ is a graph, then we call the pseudogroup of the \'{e}tale groupoid 
$\grp_E$ the \emph{pseudogroup of $E$} and denote it by $\mathcal{P}_E$. 

We will now give two alternative characterizations of the partial 
homeomorphisms of $\partial E$ that belong to $\mathcal{P}_E$.

\begin{prop} \label{prop:pseudo}
	Let $E$ be a graph, let $U$ and $V$ be open subsets of $\partial E$, and 
	let $\alpha:V\to U$ be a homeomorphism. Then the following are equivalent:
	\begin{itemize}
	\item[(1)] $\alpha\in\mathcal{P}_E$.
	\item[(2)] For all $x\in V$, there exist $m,n\in\N$ and an open subset 
	$V'$ such that $x\in V'\subseteq V$, and such that 
	$\sigma_E^m(x')=\sigma_E^n(\alpha(x'))$ for all $x'\in V'$.
	\item[(3)] There exist continuous functions $m,n:V\to\N$ such that 
	$\sigma_E^{m(x)}(x)=\sigma_E^{n(x)}(\alpha(x))$ for all $x\in V$.
	\end{itemize}
\end{prop}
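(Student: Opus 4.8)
The strategy is to prove the cycle of implications $(1)\Rightarrow(2)\Rightarrow(3)\Rightarrow(1)$, though I expect $(2)\Rightarrow(3)$ to be the only step requiring genuine work (the others are more or less direct unwinding of definitions together with Lemma~\ref{lem:open}).

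\emph{$(1)\Rightarrow(2)$.} Suppose $\alpha = \alpha_A$ for some open bisection $A\in\mathcal{S}$. The sets $Z(U',m,n,V')$ of \eqref{grp_topology} — or, more concretely, the sets $Z(U'',|\mu|,|\nu|,V'')$ with $U''$ a compact open subset of $Z(\mu)$ and $V''$ a compact open subset of $Z(\nu)$ — form a basis for the topology of $\grp_E$. Given $x\in V$, the element $\gamma=(\alpha(x),\,*,\,x)$ of $A$ lies in some basic open set $Z(U'',|\mu|,|\nu|,V'')\subseteq A$. Taking $V'$ to be the source-projection of this basic set (an open neighbourhood of $x$ contained in $V$, since $s$ is injective on $A$ and $A$ is open), the defining condition of $Z$ gives $\sigma_E^{|\mu|}(\alpha(x'))=\sigma_E^{|\nu|}(x')$ for all $x'\in V'$, so $(2)$ holds with the constants $n=|\mu|$, $m=|\nu|$.

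\emph{$(2)\Rightarrow(3)$.} This is the main obstacle: condition $(2)$ supplies, for each $x$, a neighbourhood on which $m,n$ may be taken \emph{locally constant}, but the constants can vary from point to point, and we must assemble a single globally defined pair of \emph{continuous} functions $m,n:V\to\N$. The natural approach is to cover $V$ by the neighbourhoods $V'$ produced by $(2)$; using Lemma~\ref{lem:open} (applied to each $V'$, or to the whole of $V$ after refining) we may replace this cover by a \emph{partition} of $V$ into compact open sets $\{W_i\}$, on each of which there are constants $m_i,n_i$ with $\sigma_E^{m_i}(x')=\sigma_E^{n_i}(\alpha(x'))$ for $x'\in W_i$. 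Define $m$ and $n$ to equal $m_i$ and $n_i$ on $W_i$; since the $W_i$ are pairwise disjoint and open, these functions are continuous, and they satisfy the required equation on all of $V$. (A small point to check: one must be able to shrink the $V'$ of $(2)$ so that the resulting pieces are compact open and still carry the same constants — this is immediate since compact open subsets of a $V'$ inherit the locally constant data.)

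\emph{$(3)\Rightarrow(1)$.} Given continuous $m,n:V\to\N$ with $\sigma_E^{m(x)}(x)=\sigma_E^{n(x)}(\alpha(x))$ for all $x\in V$, set $A:=\{(\alpha(x),\,n(x)-m(x),\,x):x\in V\}\subseteq\grp_E$. One checks $A$ is a bisection: $s|_A$ is injective because $s(\alpha(x),\cdot,x)=x$ and $\alpha$ is a bijection onto its image, and $r|_A$ is injective because $r(\alpha(x),\cdot,x)=\alpha(x)$ and $\alpha$ is injective. To see $A$ is open, fix $x_0\in V$; by continuity of $m,n$ and of $\alpha$ there is a compact open neighbourhood $W\subseteq V$ of $x_0$ on which $m\equiv m_0$, $n\equiv n_0$ are constant and $\alpha(W)$ is compact open; then the portion of $A$ over $W$ equals $Z(\alpha(W),\,m_0,\,n_0,\,W)$ (using that $\sigma_E^{m_0}$ is injective on $\alpha(W)$ and $\sigma_E^{n_0}$ on $W$, which one can arrange by shrinking $W$ since these maps are local homeomorphisms), which is basic open. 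Hence $A$ is open, so $A\in\mathcal{S}$, and plainly $\alpha_A=\alpha$, giving $(1)$.
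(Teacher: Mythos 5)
Your proof is correct and follows essentially the same route as the paper: the cycle $(1)\Rightarrow(2)\Rightarrow(3)\Rightarrow(1)$, with Lemma~\ref{lem:open} used to refine the neighbourhoods from $(2)$ into a disjoint compact-open partition of $V$ on which $m,n$ are locally constant, and the bisection for $(3)\Rightarrow(1)$ assembled as a union of basic sets of the form $Z(\alpha(W),\cdot,\cdot,W)$. The only discrepancies are harmless swaps of the roles of $m$ and $n$ relative to the convention in \eqref{grp_topology}, of a kind the paper's own proof also commits.
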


\begin{proof}
	$(1)\Rightarrow (2)$: Suppose $\alpha\in\mathcal{P}_E$. Let 
	$A\in\mathcal{S}$ be such that $\alpha=\alpha_A$. Let $x\in V$. Then there 
	is a unique $\gamma\in A$ such that $s(\gamma)=x$, and then 
	$r(\gamma)=\alpha(x)$. Since $A$ is an open subset of $\grp_E$, there are 
	$m,n\in\N$, an open subset $U'$ of $\partial E^{\ge m}$ such that the 
	restriction of $\sigma_E^m$ to $U'$ is injective, and an open subset $V'$ 
	of $\partial E^{\ge n}$ such that the restriction of $\sigma_E^n$ to $V'$ 
	is injective and $\sigma_E^m(U')=\sigma_E^n(V')$, and such that $\gamma\in 
	Z(U',m,n,V')\subseteq A$. Then $x\in V'\subseteq V$ and 
	$\sigma_E^m(x')=\sigma_E^n(\alpha(x'))$ for all $x'\in V'$.
	
	$(2)\implies (3)$: Assume that for all $x\in V$, there exist $m,n\in\N$ 
	and an open subset $V'$ such that $x\in V'\subseteq V$, and such that 
	$\sigma_E^m(x')=\sigma_E^n(\alpha(x'))$ for all $x'\in V'$. According to 
	Lemma \ref{lem:open}, $V$ is the disjoint union of sets that are both 
	compact and open. Since $\partial E$ is locally compact, it follows that 
	there exists a family $\{V_i:i\in I\}$ of mutually disjoint compact and 
	open sets and a family $\{(m_i,n_i):i\in I\}$ of pairs of nonnegative 
	integers such that $V=\bigcup_{i\in I}V_i$ and 
	$\sigma_E^{m_i}(x)=\sigma_E^{n_i}(\alpha(x))$ for $x\in V_i$. Define 
	$m,n:V\to\N$ by setting $m(x)=m_i$ and $n(x)=n_i$ for $x\in V_i$. Then $m$ 
	and $n$ are continuous and $\sigma_E^{m(x)}(x)=\sigma_E^{n(x)}(\alpha(x))$ 
	for all $x\in V$.
	
	$(3)\implies (1)$: Assume that $m,n:V\to\N$ are continuous functions such 
	that $\sigma_E^{m(x)}(x)=\sigma_E^{n(x)}(\alpha(x))$ for all $x\in V$. 
	Then there exist for each $x\in V$ a compact and open subset $V_x$ such 
	that $x\in V_x\subseteq V$, $m(x')=m(x)$ and $n(x')=n(x)$ for all $x'\in 
	V_x$, the restriction of $\sigma_E^{n(x)}$ to $V_x$ is injective, and the 
	restriction of $\sigma_E^{m(x)}$ of $\alpha(V_x)$ is injective. According 
	to Lemma \ref{lem:open}, $V$ is the disjoint union of sets that are both 
	compact and open. It follows that there exists a family $\{V_i:i\in I\}$ 
	of mutually disjoint compact and open sets and a family $\{(m_i,n_i):i\in 
	I\}$ of pairs of nonnegative integers such that $V=\bigcup_{i\in I}V_i$, 
	$m(x)=m_i$ and $n(x)=n_i$ for all $x\in V_i$, the restriction of 
	$\sigma_E^{n_i}$ to $V_i$ is injective, and the restriction of 
	$\sigma_E^{m_i}$ of $\alpha(V_i)$ is injective. Let $A:=\bigcup_{i\in 
	I}Z(\alpha(V_i),m_i,n_iV_i)$. Then $A\in\mathcal{S}$ and 
	$\alpha=\alpha_A$, so $\alpha\in\mathcal{P}_E$.
\end{proof}

Suppose that $E$ and $F$ are two graphs and that there exists a homeomorphism 
$h:\partial E\to \partial F$. Let $U$ and $V$ be open subsets of $\partial E$ 
and let $\alpha:V\to U$ be a homeomorphism. We denote by 
$h\circ\mathcal{P}_E\circ h^{-1}:=\{h\circ\alpha\circ h^{-1}: 
\alpha\in\mathcal{P}_E\}$. We say that the pseudogroups of $E$ and $F$ are 
isomorphic if there is a homeomorphism $h:\partial E\to \partial F$ such that 
$h\circ\mathcal{P}_E\circ h^{-1}=\mathcal{P}_F$. We can now state the main 
result of this section.

\begin{prop}\label{prop:pseudogroups and orbit equivalence}
Let $E$ and $F$ be two graphs. Then $E$ and $F$ are orbit equivalent if and 
only if the pseudogroups of $E$ and $F$ are isomorphic.
\end{prop}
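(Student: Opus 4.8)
The plan is to prove both implications using the characterization of $\mathcal{P}_E$ given in Proposition~\ref{prop:pseudo}, with the "local" condition~(2) being the key bridge between orbit equivalence and pseudogroups. First I would settle the easier direction: assume $h\circ\mathcal{P}_E\circ h^{-1}=\mathcal{P}_F$. The shift $\sigma_E$ restricted to a small enough cylinder $Z(e)$ (for $e\in E^1$) is a homeomorphism onto its image, so it defines an element of $\mathcal{P}_E$ by Proposition~\ref{prop:pseudo}(3) (take $m\equiv 1$, $n\equiv 0$). Hence $h\circ\sigma_E|_{Z(e)}\circ h^{-1}\in\mathcal{P}_F$, and applying Proposition~\ref{prop:pseudo}(3) to this partial homeomorphism of $\partial F$ gives continuous functions witnessing $\sigma_F^{m(y)}(y)=\sigma_F^{n(y)}(h(\sigma_E(h^{-1}(y))))$ on an open set; translating back to a variable $x\in\partial E^{\ge 1}$ and using local constancy of the relevant integers one extracts continuous $k_1,l_1:\partial E^{\ge 1}\to\N$ with $\sigma_F^{k_1(x)}(h(\sigma_E(x)))=\sigma_F^{l_1(x)}(h(x))$. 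The symmetric argument with $h^{-1}$ and $\sigma_F$ produces $k_1',l_1'$, giving orbit equivalence.

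For the converse, assume $E$ and $F$ are orbit equivalent via $h$, $k_1,l_1,k_1',l_1'$. I would first bootstrap from the one-step relations to $n$-step relations: iterating \eqref{eq: con orb equiv} along a path, one shows by induction that for each $n$ there are continuous functions $k_n,l_n:\partial E^{\ge n}\to\N$ with $\sigma_F^{k_n(x)}(h(\sigma_E^n(x)))=\sigma_F^{l_n(x)}(h(x))$ for all $x\in\partial E^{\ge n}$, and similarly on the $F$ side. Now take $\alpha\in\mathcal{P}_E$, say $\alpha:V\to U$; I want to show $h\circ\alpha\circ h^{-1}\in\mathcal{P}_F$. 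Using Proposition~\ref{prop:pseudo}(2) for $\alpha$: fix $y\in h(V)$, set $x=h^{-1}(y)\in V$, choose $m,n$ and an open $V'\ni x$ with $\sigma_E^m(x')=\sigma_E^n(\alpha(x'))$ for all $x'\in V'$. Applying $h$ to both sides and inserting the $n$-step orbit-equivalence relations (the $k_m,l_m$ for $E\to F$ at $x'$ and at $\alpha(x')$) yields integers $p(x'),q(x')$ with $\sigma_F^{p(x')}(y') = \sigma_F^{q(x')}((h\circ\alpha\circ h^{-1})(y'))$ for $y'=h(x')$ ranging over the open set $h(V')$; shrinking $h(V')$ so that these locally-constant integers are constant, Proposition~\ref{prop:pseudo}(2) gives $h\circ\alpha\circ h^{-1}\in\mathcal{P}_F$. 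Thus $h\circ\mathcal{P}_E\circ h^{-1}\subseteq\mathcal{P}_F$, and the reverse inclusion follows by running the same argument with the roles of $E$ and $F$ (and $h^{-1}$) interchanged, using $k_1',l_1'$.

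The main obstacle is the bookkeeping in passing between $\partial E$ and $\partial F$ while keeping the exponents of the shift maps under control: the functions $k_n,l_n$ obtained by iteration are only continuous, i.e. locally constant on a basis of compact-open sets, so one must repeatedly pass to smaller neighbourhoods and invoke Lemma~\ref{lem:open} (as in the proof of Proposition~\ref{prop:pseudo}) to patch local data into a global bisection. A secondary subtlety is making sure all shifts are applied only where defined — e.g. that after composing with $h$ and a power of $\sigma_F$ the relevant points still lie in $\partial F^{\ge k}$ for the required $k$ — which is automatic once one works on cylinder sets small enough that all the integers involved are constant. None of the individual steps is deep; the content is organizing the iteration and the neighbourhood-shrinking cleanly, and observing that condition~(2) of Proposition~\ref{prop:pseudo} is exactly the right intermediary, being "local" enough to match the pointwise nature of orbit equivalence yet strong enough to recover membership in the pseudogroup.
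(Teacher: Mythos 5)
Your proposal is correct and follows essentially the same route as the paper: the reverse direction applies the pseudogroup hypothesis to the restricted shifts $\sigma_E|_{Z(e)}$ and patches the resulting integer functions over the disjoint cover $\{Z(e)\}_{e\in E^1}$, while the forward direction first iterates the one-step relations to $n$-step relations (the paper's Lemma~\ref{lem:orbit}) and then composes them with the local data from Proposition~\ref{prop:pseudo} to transport $\alpha$ across $h$. The only cosmetic difference is that you invoke condition (2) of Proposition~\ref{prop:pseudo} where the paper writes explicit continuous functions as in condition (3); these are interchangeable by that proposition.
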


To prove this proposition we will use the following result.

\begin{lemma}\label{lem:orbit}
	Suppose two graphs $E$ and $F$ are orbit equivalent, $h:\partial E\to 
	\partial F$ is a homeomorphism and $k_1,l_1:\partial E^{\ge 1}\to \N$ and 
	$k'_1,l'_1:\partial F^{\ge 1}\to \N$ are continuous functions satisfying 
	\eqref{eq: con orb equiv}. Let $n\in\N$. Then there exist continuous 
	functions $k_n,l_n:\partial E^{\ge n}\to \N$ and $k'_n,l'_n:\partial 
	F^{\ge n}\to \N$ such that 
	\begin{equation}\label{eq: con orb equiv ext}
	\sigma_F^{k_n(x)}(h(\sigma_E^n(x)))=\sigma_F^{l_n(x)}(h(x))\quad\text{and}\quad\sigma_E^{k'_n(y)}(h^{-1}(\sigma_F^n(y)))=\sigma_E^{l'_n(y)}(h^{-1}(y)),
	\end{equation}
	for all $x\in\partial E^{\ge n}, y\in \partial F^{\ge n}$.
\end{lemma}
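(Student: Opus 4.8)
The plan is to prove this by induction on $n$. The base case $n=1$ is precisely the hypothesis \eqref{eq: con orb equiv}; we simply take $k_1,l_1,k'_1,l'_1$ as given. For the inductive step, suppose we have already constructed continuous functions $k_n,l_n:\partial E^{\ge n}\to\N$ and $k'_n,l'_n:\partial F^{\ge n}\to\N$ satisfying \eqref{eq: con orb equiv ext}. By symmetry it suffices to produce $k_{n+1},l_{n+1}:\partial E^{\ge n+1}\to\N$; the functions $k'_{n+1},l'_{n+1}$ are obtained in exactly the same way with the roles of $E$ and $F$ (and $h$, $h^{-1}$) interchanged.

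The idea is to chain the relations for $n$ and for $1$. Fix $x\in\partial E^{\ge n+1}$. Since $\sigma_E^{n+1}(x)=\sigma_E(\sigma_E^n(x))$, I want to compare $h(\sigma_E^{n+1}(x))$ with $h(x)$ by first comparing $h(\sigma_E^{n+1}(x))=h(\sigma_E(\sigma_E^n(x)))$ with $h(\sigma_E^n(x))$ using the $n=1$ relation applied to the point $\sigma_E^n(x)\in\partial E^{\ge 1}$, and then comparing $h(\sigma_E^n(x))$ with $h(x)$ using the inductive relation \eqref{eq: con orb equiv ext} for $n$. Concretely, apply \eqref{eq: con orb equiv} to $\sigma_E^n(x)$ to get
\[
\sigma_F^{k_1(\sigma_E^n(x))}\bigl(h(\sigma_E^{n+1}(x))\bigr)=\sigma_F^{l_1(\sigma_E^n(x))}\bigl(h(\sigma_E^n(x))\bigr),
\]
and apply $\sigma_F$ to the appropriate power of \eqref{eq: con orb equiv ext} to relate $\sigma_F^{k_n(x)}(h(\sigma_E^n(x)))$ to $\sigma_F^{l_n(x)}(h(x))$. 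Composing these and collecting exponents, one finds that $\sigma_F$ applied to a suitable number of times to $h(\sigma_E^{n+1}(x))$ agrees with $\sigma_F$ applied a suitable number of times to $h(x)$; explicitly one can take something like $k_{n+1}(x):=k_n(x)+k_1(\sigma_E^n(x))$ and $l_{n+1}(x):=l_n(x)+l_1(\sigma_E^n(x))$, after applying $\sigma_F$ the requisite number of extra times to both sides so that all the shift maps are defined. Continuity of $k_{n+1}$ and $l_{n+1}$ follows immediately since they are sums and compositions of continuous functions ($k_1,l_1$ are continuous, $\sigma_E^n$ is continuous on $\partial E^{\ge n}\supseteq\partial E^{\ge n+1}$, and $k_n,l_n$ are continuous by the inductive hypothesis).

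The main point requiring care is the bookkeeping of the shift exponents so that every expression $\sigma_F^{j}(\cdot)$ that appears is actually defined, i.e.\ that the relevant boundary path lies in $\partial F^{\ge j}$; this is where one uses that $\sigma_F^a(z)=\sigma_F^b(w)$ implies $\sigma_F^{a+c}(z)=\sigma_F^{b+c}(w)$ whenever the latter are defined, so one can always pad both exponents by a common constant. I expect this padding argument to be the only genuine obstacle; everything else is a routine induction. One should also note at the outset that $\partial E^{\ge n+1}\subseteq\partial E^{\ge n}$ and that $\sigma_E^n$ maps $\partial E^{\ge n+1}$ into $\partial E^{\ge 1}$, which is what makes the composition $k_1\circ\sigma_E^n$ meaningful on $\partial E^{\ge n+1}$.
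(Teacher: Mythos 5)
Your overall strategy is exactly the paper's: induct on $n$, and at the inductive step chain the $n=1$ relation applied at the point $\sigma_E^n(x)$ with the relation already established for $n$, both of which pass through the intermediate path $v:=h(\sigma_E^n(x))$. The continuity argument is also the same. So the architecture is fine.

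There is, however, a genuine problem with the explicit exponents you propose, and your suggested remedy points in the wrong direction. Write $a=k_1(\sigma_E^n(x))$, $b=l_1(\sigma_E^n(x))$, $c=k_n(x)$, $d=l_n(x)$, so that $\sigma_F^a(h(\sigma_E^{n+1}(x)))=\sigma_F^b(v)$ and $\sigma_F^c(v)=\sigma_F^d(h(x))$. Your formula $k_{n+1}=a+c$, $l_{n+1}=b+d$ requires bringing both equations to the common exponent $b+c$ on $v$, i.e.\ it requires $\sigma_F^{b+c}(v)$ to be defined. For an arbitrary graph the boundary path space contains finite paths (those ending at singular vertices), and the hypotheses only guarantee $|v|\ge b$ and $|v|\ge c$, not $|v|\ge b+c$; if, say, $|v|=b=c=1$ your expression is undefined. ``Padding both exponents by a common constant'' cannot repair this, because padding only increases the number of shifts you need to apply to a path that may already be exhausted; the fix is to pad each equation by as \emph{little} as possible, bringing both to the exponent $\max\{b,c\}$ on $v$, which is always defined since $|v|\ge\max\{b,c\}$. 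This yields the correct formulas $k_{n+1}(x)=a+\max\{b,c\}-b$ and $l_{n+1}(x)=d+\max\{b,c\}-c$ (which are still continuous, as $\max$ of continuous functions is continuous). Your formulas agree with these only when $v$ is infinite or long enough, so the argument as written fails precisely in the generality the lemma claims.
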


\begin{proof}
	There is nothing to prove for $n=0$ and $n=1$. We will prove the general 
	case by induction. Let $m\ge 1$ and suppose that the lemma holds for 
	$n=m$. Let $x\in\partial E^{\ge m+1}$. Then 
	\begin{equation*}
		\sigma_F^{k_1(\sigma_E^m(x))}(h(\sigma_E^{m+1}(x)))=\sigma_F^{l_1(\sigma_E^m(x))}(h(\sigma_E^m(x)))
	\end{equation*}
	and
	\begin{equation*}
		\sigma_E^{k_m(x)}(h(\sigma_E^m(x)))=\sigma^{l_m(x)}(h(x)). 
	\end{equation*}
	Let 
	\begin{align}
		k_{m+1}(x)&:=k_1(\sigma_E^m(x))+\max\{l_1(\sigma_E^m(x)),k_m(x)\}-l_1(\sigma_E^m(x))\label{eq:k}\\
		 l_{m+1}(x)&:=l_m(x)+\max\{l_1(\sigma_E^m(x)),k_m(x)\}-k_m(x)\label{eq:l}.
	\end{align}
	Then 
	\begin{equation*}
		\sigma_F^{k_{m+1}(x)}(h(\sigma_E^{m+1}(x)))=\sigma_F^{l_{m+1}(x)}(h(x)).
	\end{equation*}
	Since $k_1$, $l_1$, $k_m$, and $l_m$ are continuous, it follows that 
	$k_{m+1},l_{m+1}:\partial E^{\ge m+1}\to\N$ defined by \eqref{eq:k} and 
	\eqref{eq:l} are also continuous.
	
	Similarly, if we define $k'_{m+1},l'_{m+1}:\partial F^{\ge m+1}\to\N$ by 
	letting
	\begin{align*}
		k'_{m+1}(y)&:=k'_1(\sigma_F^m(y))+\max\{l'_1(\sigma_F^m(y)),k'_m(y)\}-l'_1(\sigma_F^m(y))\\
		 l'_{m+1}(y)&:=l_m(y)+\max\{l'_1(\sigma_F^m(y)),k'_m(y)\}-k'_m(y)
	\end{align*}
	for $y\in\partial F^{\ge m+1}$, then $k'_{m+1}$ and $l'_{m+1}$ are 
	continuous, and  
	\begin{equation*}
		\sigma_E^{k'_{m+1}(y)}(h^{-1}(\sigma_F^{m+1}(y)))=\sigma_E^{l'_{m+1}(y)}(h^{-1}(y))
	\end{equation*}
	for all $y\in\partial E^{\ge m+1}$. Thus, the lemma also holds for 
	$n=m+1$, and the general result holds by induction.
\end{proof}

\begin{proof}[Proof of Proposition~\ref{prop:pseudogroups and orbit equivalence}]
	Suppose $E$ and $F$ are orbit equivalent. Then there exists a 
	homeomorphism $h:\partial E\to \partial F$ and, for each $n\in\N$, there 
	exists 
	continuous functions $k_n, l_n:\partial E^{\ge n}\to\N$ satisfying the 
	first equation of \eqref{eq: con orb equiv ext}. Let $(\alpha:V\to 
	U)\in\mathcal{P}_E$, and let $m,n:V\to\N$ be continuous functions such 
	that $\sigma_E^{m(x)}(x)=\sigma_E^{n(x)}(\alpha(x))$ for all $x\in V$. Let 
	$y\in h(V)$. Then 
	\begin{align*}
		\sigma_F^{l_{n(h^{-1}(y))}(\alpha(h^{-1}(y)))}(h(\alpha(h^{-1}(y))))
		&=\sigma_F^{k_{n(h^{-1}(y))}(\alpha(h^{-1}(y)))}(h(\sigma_E^{n(h^{-1}(y))}(\alpha(h^{-1}(y)))))\\
		&=\sigma_F^{k_{n(h^{-1}(y))}(\alpha(h^{-1}(y)))}(h(\sigma_E^{m(h^{-1}(y))}(h^{-1}(y)))),
	\end{align*}
	and
	\begin{equation*}
		\sigma_F^{k_{m(h^{-1}(y))}(h^{-1}(y))}(h(\sigma_E^{m(h^{-1}(y))}(h^{-1}(y))))=
		\sigma_F^{l_{m(h^{-1}(y))}(h^{-1}(y))}(y).
	\end{equation*}
	So if we let 
	\begin{multline}\label{eq:m}
		m'(y):=l_{m(h^{-1}(y))}(h^{-1}(y))+\max\{k_{n(h^{-1}(y))}(\alpha(h^{-1}(y))),k_{m(h^{-1}(y))}(h^{-1}(y))\}\\-k_{m(h^{-1}(y))}(h^{-1}(y))
	\end{multline}
	and
	\begin{multline}\label{eq:n}
		n'(y):=l_{n(h^{-1}(y))}(\alpha(h^{-1}(y)))+\max\{k_{n(h^{-1}(y))}(\alpha(h^{-1}(y))),k_{m(h^{-1}(y))}(h^{-1}(y))\}\\-k_{n(h^{-1}(y))}(\alpha(h^{-1}(y))),
	\end{multline}
	then $\sigma_F^{m'(y)}(y)=\sigma^{n'(y)}(h(\alpha(h^{-1}(y))))$. Since 
	$h^{-1}$, $m$, $n$, and $\alpha$ are continuous, it follows that 
	$m',n':h(V)\to\N$ defined by \eqref{eq:m} and \eqref{eq:n} are also 
	continuous. Thus, it follows from Proposition \ref{prop:pseudo} that 
	$h\circ\alpha\circ h^{-1}\in\mathcal{P}_F$. A similar argument proves that 
	if $\alpha'\in\mathcal{P}_F$, then $h^{-1}\circ\alpha'\circ 
	h\in\mathcal{P}_E$. Thus $h\circ\mathcal{P}_E\circ h^{-1}=\mathcal{P}_F$ 
	and the pseudogroups of $E$ and $F$ are isomorphic.

Now suppose that $h:\partial E\to \partial F$ is a homeomorphism such that 
$h\circ\mathcal{P}_E\circ h^{-1}=\mathcal{P}_F$. Fix $e\in E^1$ and let 
$\alpha_e:=\sigma_E|_{Z(e)}$. Then $\alpha_e$ is a homeomorphism from $Z(e)$ to 
$\alpha_e(Z(e))$ and since $\alpha_e(x)=\sigma_E(x)$ for all $x\in Z(e)$, it 
follows from Proposition \ref{prop:pseudo} that $\alpha_e\in\mathcal{P}_E$. 
Thus $h\circ\alpha_e\circ h^{-1}\in\mathcal{P}_F$ by assumption. It follows 
from Proposition \ref{prop:pseudo} that there are continuous functions 
$m_e',n_e':h(Z(e))\to\N$ such that 
\begin{equation*}
\sigma_F^{n'_e(y)}(h(\alpha_e(h^{-1}(y))))=\sigma_F^{m'_e(y)}(y)\quad\text{for 
$y\in h(Z(e))$.}
\end{equation*}
Define functions $k_1,l_1:\partial E^{\ge 1}\to\N$ by $k_1(x)=n'_{x_1}(h(x))$ 
and $l_1(x)=m'_{x_1}(h(x))$, which are continuous because the $Z(e)$ are 
pairwise-disjoint compact open sets covering $\partial E^{\ge 1}$. Then for 
each $x=x_1x_2\dots\in \partial E$ we have
\begin{equation*}
	\sigma_F^{l_1(x)}(h(x))=\sigma_F^{m'_{x_1}(h(x))}(h(x))=\sigma_F^{n'_{x_1}(h(x))}(h(\alpha_{x_1}(x)))=\sigma_F^{k_1(x)}(h(\sigma_E(x))).
\end{equation*}
Hence $k_1$ and $l_1$ satisfy the first equation from \eqref{eq: con orb 
equiv}. A similar argument gets the second equation from \eqref{eq: con orb 
equiv}. Thus $E$ and $F$ are orbit equivalent.
\end{proof}

\section{The extended Weyl groupoid of $(C^*(E),\mathcal{D}(E))$}\label{sec: 
ext Weyl}

Proposition~\ref{prop:groupoid} says that the pair $(C^*(E),\mathcal{D}(E))$ 
is an invariant of $\grp_E$, in the sense that if $E$ and $F$ are two graphs 
such that $\grp_E$ and $\grp_F$ are isomorphic as topological groupoids, then 
there is an isomorphism from $C^*(E)$ to $C^*(F)$ which maps $\mathcal{D}(E)$ 
onto $\mathcal{D}(F)$. In this section we show that $\grp_E$ is an invariant 
of $(C^*(E),\mathcal{D}(E))$, in the sense that if there is an isomorphism 
from $C^*(E)$ to $C^*(F)$ which maps $\mathcal{D}(E)$ onto $\mathcal{D}(F)$, 
then $\grp_E$ and $\grp_F$ are isomorphic as topological groupoids.

To prove this result we build a groupoid from $(C^*(E),\DD(E))$ that we call the 
extended Weyl groupoid, which generalises Renault's Weyl groupoid construction 
from \cite{Ren2} applied to $(C^*(E),\DD(E))$. Recall from \cite{Ren2} that 
Weyl groupoids are associated to pairs $(A,B)$ consisting of a $C^*$-algebra 
$A$ and an abelian $C^*$-subalgebra $B$ which contains an approximate unit of 
$A$. The Weyl groupoid construction has the property that if $\grp$ is a 
topologically principal \'etale Hausdorff locally compact second countable 
groupoid and $A=C^*_{\operatorname{red}}(\grp)$ and $B=C_0(\grp^0)$, then the 
associated Weyl groupoid is isomorphic to $\grp$ as a topological groupoid. We 
will modify Renault's construction for pairs $(C^*(E),\mathcal{D}(E))$ to 
obtain a groupoid $\grp_{(C^*(E),\mathcal{D}(E))}$ such that $\grp_{(C^*(E),\mathcal{D}(E))}$ and $\grp_E$ are isomorphic as topological groupoids, even when $\grp_E$ is not topologically 
principal. We will then show that if $E$ and $F$ are two graphs such that 
there is an isomorphism from $C^*(E)$ to $C^*(F)$ which maps $\mathcal{D}(E)$ 
onto $\mathcal{D}(F)$, then $\grp_{(C^*(E),\mathcal{D}(E))}$ and 
$\grp_{(C^*(F),\mathcal{D}(F))}$, and thus $\grp_E$ and $\grp_F$ are 
isomorphic as topological groupoids.

As in \cite{Ren2} (and originally defined in \cite{Kum2}), we define the 
\emph{normaliser} of $\mathcal{D}(E)$ to be the set
\begin{equation*}
	N(\mathcal{D}(E)):=\{n\in C^*(E): ndn^*, n^*dn\in\mathcal{D}(E)\text{ for 
	all }d\in\mathcal{D}(E)\}.
\end{equation*} 
According to \cite[Lemma 4.6]{Ren2}, $nn^*,n^*n\in\mathcal{D}(E)$ for $n\in 
N(\mathcal{D}(E))$. Recalling the definition of $h_E$ given in \eqref{he_map}, 
for $n\in N(\mathcal{D}(E))$, we let $\dom(n):=\{x\in\partial 
E:h_E(x)(n^*n)>0\}$ and $\ran(n):=\{x\in\partial E:h_E(x)(nn^*)>0\}$. It 
follows from \cite[Proposition 4.7]{Ren2} that, for $n\in N(\mathcal{D}(E))$, 
there is a unique homeomorphism $\alpha_n:\dom(n)\to\ran(n)$ such that, for 
all $d\in\mathcal{D}(E)$,
\begin{equation}\label{eq: defining prop of alphas}
	h_E(x)(n^*dn)=h_E(\alpha_n(x))(d)h_E(x)(n^*n).
\end{equation}
From \cite[Lemma~4.10]{Ren2} we also know that $\alpha_{n^*}=\alpha_n^{-1}$ 
and $\alpha_{mn}=\alpha_m\circ\alpha_n$ for each $m,n\in N(\DD(E))$. 

The following lemma gives an insight into how the homeomorphisms $\alpha_n$ 
work. We 
collect further properties of these homeomorphisms in Lemma~\ref{lem: 
properties of alpha n}.

\begin{lemma}\label{lem: alpha sub spanning elt }
Let $E$ be a graph. For each $\mu,\nu\in E^*$ with $r(\mu)=r(\nu)$ we have 
$s_{\mu}s_\nu^*\in N(\DD(E))$ with
\[
\dom(s_\mu s_\nu^*)=Z(\nu),\,\ran(s_\mu 
s_\nu^*)=Z(\mu)\text{ and }\alpha_{s_\mu s_\nu^*}(\nu z)=\mu z\text{ 
for all $z\in 
r(\nu)\partial E$.}
\]
\end{lemma}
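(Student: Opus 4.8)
The plan is to verify directly that $n:=s_\mu s_\nu^*$ is a normaliser of $\DD(E)$, compute $n^*n$ and $nn^*$ to identify $\dom(n)$ and $\ran(n)$, and then check that the prescribed map $\nu z\mapsto \mu z$ satisfies the defining property \eqref{eq: defining prop of alphas}, which pins down $\alpha_n$ by the uniqueness in \cite[Proposition 4.7]{Ren2}.

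First I would show $s_\mu s_\nu^*\in N(\DD(E))$. Using the Cuntz--Krieger relations, for a spanning element $d=s_\lambda s_\lambda^*\in\DD(E)$ one computes $n^*dn = s_\nu s_\mu^* s_\lambda s_\lambda^* s_\mu s_\nu^*$; the middle product $s_\mu^* s_\lambda s_\lambda^* s_\mu$ is a standard graph-algebra calculation that evaluates to $s_\kappa s_\kappa^*$ or $p_{r(\mu)}$ (when $\mu$ and $\lambda$ are comparable) or to $0$ otherwise, so $n^*dn$ lands in $\DD(E)$; symmetrically for $ndn^*$. By linearity and continuity this extends to all $d\in\DD(E)$. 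Next, $n^*n = s_\nu s_\mu^* s_\mu s_\nu^* = s_\nu p_{r(\mu)}s_\nu^* = s_\nu s_\nu^*$ since $r(\mu)=r(\nu)$, and similarly $nn^* = s_\mu s_\mu^*$. Applying $h_E$ and the formula \eqref{he_map}, $h_E(x)(n^*n)=h_E(x)(s_\nu s_\nu^*)$ is $1$ if $x\in Z(\nu)$ and $0$ otherwise, so $\dom(n)=Z(\nu)$; likewise $\ran(n)=Z(\mu)$.

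Finally I would identify $\alpha_n$. Define $\beta:Z(\nu)\to Z(\mu)$ by $\beta(\nu z)=\mu z$ for $z\in r(\nu)\partial E$; this is a well-defined homeomorphism because every element of $Z(\nu)$ is uniquely of the form $\nu z$ with $z\in r(\nu)\partial E$ (and similarly for $Z(\mu)$), and $\beta$ intertwines the identifications $Z(\nu)\cong r(\nu)\partial E\cong Z(\mu)$. To see $\beta=\alpha_n$, it suffices by uniqueness to verify \eqref{eq: defining prop of alphas}, i.e. $h_E(x)(n^*dn)=h_E(\beta(x))(d)\,h_E(x)(n^*n)$ for all $d\in\DD(E)$ and $x\in\partial E$. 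Both sides vanish unless $x\in Z(\nu)$, so write $x=\nu z$; it is enough to check $d=s_\lambda s_\lambda^*$. Using the computation of $n^*s_\lambda s_\lambda^* n$ from the first step, one gets $h_E(\nu z)(n^*dn)=1$ exactly when $\nu z\in Z(\nu\eta)$ for the relevant continuation, which happens exactly when $\mu z\in Z(\lambda)$, i.e. when $h_E(\mu z)(s_\lambda s_\lambda^*)=1$; combined with $h_E(\nu z)(n^*n)=1$ this gives both sides equal. By the uniqueness clause of \cite[Proposition 4.7]{Ren2}, $\alpha_n=\beta$, which is the claimed formula.

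The main obstacle is the bookkeeping in the product $s_\mu^* s_\lambda s_\lambda^* s_\mu$ and its image under $h_E$: one must carefully split into the cases $\mu=\lambda\mu'$, $\lambda=\mu\lambda'$, and $\mu,\lambda$ incomparable, and track how the resulting cylinder sets match up under the bijection $z\mapsto(\nu z,\mu z)$. Everything else is routine manipulation with the Cuntz--Krieger relations and the explicit description \eqref{he_map} of $h_E$; once the case analysis is organised, the verification of \eqref{eq: defining prop of alphas} is mechanical.
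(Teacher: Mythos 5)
Your proposal is correct and follows essentially the same route as the paper: the same case analysis of $(s_\mu s_\nu^*)^* s_\lambda s_\lambda^* (s_\mu s_\nu^*)$, the same computation of $n^*n$ and $nn^*$ to get $\dom$ and $\ran$, and the same use of the defining relation \eqref{eq: defining prop of alphas} on spanning elements to pin down $\alpha_{s_\mu s_\nu^*}$. The only cosmetic difference is that you verify the candidate map $\nu z\mapsto\mu z$ satisfies the relation and invoke uniqueness, whereas the paper directly computes the character $h_E(\alpha_{s_\mu s_\nu^*}(\nu z))$ and identifies it with $h_E(\mu z)$ — these are the same calculation read in opposite directions.
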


\begin{proof}
Let $\mu,\nu\in E^*$ with $r(\mu)=r(\nu)$. For each $\lambda\in E^*$ we have
\begin{equation}\label{eq: key s mu s nu star normaliser}
(s_\mu s_\nu^*)^*s_\lambda s_\lambda^*(s_\mu s_\nu ^*) =
\begin{cases}
s_\nu s_\nu^* & \text{if $\mu=\lambda\mu'$}\\
s_{\nu\lambda'}s_{\nu\lambda'}^* & \text{if $\lambda=\mu\lambda'$}\\
0 & \text{otherwise.}
\end{cases}
\end{equation}
So $(s_\mu s_\nu^*)^*s_\lambda s_\lambda^*(s_\mu s_\nu ^*)\in\DD(E)$, and it 
follows that $(s_\mu s_\nu^*)^*d(s_\mu s_\nu ^*)\in\DD(E)$ for all 
$d\in\DD(E)$. A similar argument shows that $(s_\mu s_nu^*)d(s_\mu s_\nu 
^*)^*\in\DD(E)$ for all $d\in\DD(E)$, and hence $s_\mu s_\nu^*\in N(D(E))$. We 
have
\[
h_E(x)((s_\mu s_{\nu}^*)^*s_\mu s_\nu^*)=h_E(x)(s_\nu s_\nu^*)=
\begin{cases}
1 & \text{if $x\in Z(\nu)$}\\
0 & \text{if $x\not\in Z(\nu)$},
\end{cases}
\]
and hence $\dom(s_\mu s_\nu^*)=Z(\nu)$. A similar calculation gives 
$\ran(s_\mu s_\nu^*)=Z(\mu)$. 

Now suppose $z\in 
r(\nu)\partial E$. We use (\ref{eq: defining prop of alphas}) and (\ref{eq: 
key s mu s nu star normaliser}) to get
\begin{align*}
h_E(\alpha_{s_\mu s_\nu^*}(\nu z))(s_\lambda s_\lambda^*) &= h_E(\nu 
z)\big((s_\mu 
s_\nu^*)^*s_\lambda s_\lambda^*(s_\mu s_\nu ^*)\big)h_E(\nu z)\big((s_\mu 
s_{\nu}^*)^*s_\mu s_\nu^*\big)\\
&= 
\begin{cases}
h_E(\nu z)(s_\nu s_\nu^*) & \text{if $\mu=\lambda\mu'$}\\
h_E(\nu z)(s_{\nu\lambda'} s_{\nu\lambda'}^*) & \text{if 
$\lambda=\mu\lambda'$}\\
0 & \text{otherwise}
\end{cases}\\
&= 
\begin{cases}
1 & \text{if $\mu z\in Z(\lambda)$}\\
0 & \text{otherwise}
\end{cases}\\
&= h_E(\mu z)(s_\lambda s_\lambda^*).
\end{align*}
It follows that $h_E(\alpha_{s_\mu s_\nu^*}(\nu z))=h_E(\mu z)$, and hence 
$\alpha_{s_\mu s_\nu^*}(\nu z)=\mu z$.
\end{proof}

Denote by $\partial E_\is$ the set of isolated points in $\partial E$. Notice 
that $x\in\partial E$ belongs to $\partial E_\is$ if and only if the 
characteristic function $1_{\{x\}}$ belongs to $C_0(\partial E)$. For 
$x\in\partial E_\is$, we let $p_x$ denote the unique element of 
$\mathcal{D}(E)$ satisfying that $h_E(y)(p_x)$ is 1 if $y=x$ and zero 
otherwise.

\begin{lemma}\label{lem: properties of alpha n}
Let $E$ be a graph, $n\in N(\DD(E))$ and $x\in\partial E_\is\cap\dom(n)$. Then
\begin{enumerate}
\item[(a)] $np_xn^*=h_E(x)(n^*n)p_{\alpha_n(x)}$,
\item[(b)] $n^*p_{\alpha_n(x)}n=h_E(x)(n^*n)p_x$, and 
\item[(c)] $np_x=p_{\alpha_n(x)}n$.
\end{enumerate}
\end{lemma}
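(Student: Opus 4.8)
The plan is to translate everything into the commutative picture furnished by the Gelfand isomorphism $\DD(E)\cong C_0(\partial E)$, $d\mapsto\big(y\mapsto h_E(y)(d)\big)$. Since this map is injective, the identities (a) and (b)---both of whose sides lie in $\DD(E)$, as $n\in N(\DD(E))$---can be checked by evaluating every character $h_E(y)$, $y\in\partial E$, on each side, whereas (c) will be deduced algebraically from (a). Two preliminaries come first. As $\{x\}$ is open in $\partial E$ and $\alpha_n$ is a homeomorphism of the open set $\dom(n)$ onto the open set $\ran(n)$, the set $\{\alpha_n(x)\}=\alpha_n(\{x\})$ is open in $\partial E$; hence $\alpha_n(x)\in\partial E_\is\cap\ran(n)=\partial E_\is\cap\dom(n^*)$, so $p_{\alpha_n(x)}$ is defined and the three statements make sense. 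Next, applying \eqref{eq: defining prop of alphas} with $d=nn^*$ (which lies in $\DD(E)$ by \cite[Lemma 4.6]{Ren2}) gives $h_E(x)\big((n^*n)^2\big)=h_E(\alpha_n(x))(nn^*)\,h_E(x)(n^*n)$; since $h_E(x)$ is a character its left-hand side equals $h_E(x)(n^*n)^2$, and since $h_E(x)(n^*n)>0$ (because $x\in\dom(n)$) cancelling a factor yields the key identity
\[
h_E(\alpha_n(x))(nn^*)=h_E(x)(n^*n).
\]

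For (a): fix $y\in\partial E$ and compute $h_E(y)(np_xn^*)$. If $y\notin\ran(n)$ then $h_E(y)(nn^*)=0$, and since $0\le np_xn^*\le nn^*$ (as $nn^*-np_xn^*=n(1-p_x)n^*\ge0$ in the unitization) and $h_E(y)$ is positive, $h_E(y)(np_xn^*)=0$. If $y\in\ran(n)=\dom(n^*)$, then \eqref{eq: defining prop of alphas} applied to $n^*$, together with $\alpha_{n^*}=\alpha_n^{-1}$, gives $h_E(y)(np_xn^*)=h_E(\alpha_n^{-1}(y))(p_x)\,h_E(y)(nn^*)$, which is $h_E(\alpha_n(x))(nn^*)=h_E(x)(n^*n)$ when $y=\alpha_n(x)$ and $0$ for other $y\in\ran(n)$. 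In all cases $h_E(y)(np_xn^*)=h_E(x)(n^*n)\,h_E(y)(p_{\alpha_n(x)})$, so by injectivity $np_xn^*=h_E(x)(n^*n)\,p_{\alpha_n(x)}$.

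Part (b) is obtained by applying (a) with $n$ and $x$ replaced by $n^*$ and $\alpha_n(x)\in\partial E_\is\cap\dom(n^*)$: since $\alpha_{n^*}=\alpha_n^{-1}$ this gives $n^*p_{\alpha_n(x)}n=h_E(\alpha_n(x))(nn^*)\,p_x$, which equals $h_E(x)(n^*n)\,p_x$ by the key identity. For (c), note that $p_x(n^*n)=h_E(x)(n^*n)\,p_x$ in $\DD(E)$ (under the Gelfand isomorphism, $1_{\{x\}}$ times the function corresponding to $n^*n$ equals $h_E(x)(n^*n)$ times $1_{\{x\}}$); using (a) to rewrite $p_{\alpha_n(x)}$, we then compute
\[
p_{\alpha_n(x)}n=\frac{1}{h_E(x)(n^*n)}\,(np_xn^*)\,n=\frac{1}{h_E(x)(n^*n)}\,n\,\big(p_x(n^*n)\big)=np_x.
\]
The whole argument is essentially bookkeeping once \eqref{eq: defining prop of alphas}, $\alpha_{n^*}=\alpha_n^{-1}$, and $nn^*,n^*n\in\DD(E)$ are available; the only place requiring a little care is the off-range case $y\notin\ran(n)$ in (a), where \eqref{eq: defining prop of alphas} is unavailable and one must argue via positivity of characters together with $np_xn^*\le nn^*$ instead.
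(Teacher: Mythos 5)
Your proof is correct and follows essentially the same route as the paper's: derive $h_E(\alpha_n(x))(nn^*)=h_E(x)(n^*n)$ from \eqref{eq: defining prop of alphas} with $d=nn^*$, verify (a) by evaluating characters and invoking the defining property of $p_{\alpha_n(x)}$, obtain (b) by swapping $n\leftrightarrow n^*$ and $x\leftrightarrow\alpha_n(x)$, and deduce (c) algebraically from (a) together with $n^*np_x=h_E(x)(n^*n)p_x$. Your explicit treatment of the case $y\notin\ran(n)$ via positivity, and the observation that $\alpha_n(x)$ is isolated so that $p_{\alpha_n(x)}$ is defined, are small points the paper leaves implicit.
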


\begin{proof}
We use Equation~(\ref{eq: defining prop of alphas}) with 
$d=nn^*$ 
to get
\[
h_E(x)(n^*n)^2=h_E(x)(n^*nn^*n)=h_E(\alpha_n(x))(nn^*)h_E(x)(n^*n),
\]
which implies that 
\begin{equation}\label{eq: hEs and alphas}
h_E(\alpha_n(x))(nn^*)=h_E(x)(n^*n). 
\end{equation}
Note that this is a 
positive number because $x\in\dom(n)$. For (a) we again use 
(\ref{eq: defining prop of alphas}) to get
\begin{align*}
h_E(y)\big(\bigl(h_E(\alpha_n(x))(nn^*)\bigr)^{-1}np_xn^*\big) &= 
\bigl(h_E(\alpha_n(x))(nn^*)\bigr)^{-1}h_E(y)(np_xn^*)\\
&= \bigl(h_E(\alpha_n(x))(nn^*)\bigr)^{-1}h_E(\alpha_{n^*}(y))(p_x)h_E(y)(nn^*)\\
&= 
\begin{cases}
1 & \text{if $y=\alpha_n(x)$}\\
0 & \text{otherwise}.
\end{cases}
\end{align*}
By the defining property of $p_{\alpha_n(x)}$ we now have 
$p_{\alpha_n(x)}=\bigl(h_E(\alpha_n(x))(nn^*)\bigr)^{-1}np_xn^*$. Using (\ref{eq: hEs and 
alphas}) gives $np_xn^*=h_E(x)(n^*n)p_{\alpha_n(x)}$, which is (a). 
Identity (b) follows from (a) by replacing $n$ with $n^*$ and $x$ with 
$\alpha_n(x)$ and then use (\ref{eq: hEs and 
alphas}).

To prove (c) we first notice that
\begin{align*}
h_E(y)\big((h_E(x)(n^*n))^{-1}n^*np_x\big)=(h_E(x)(n^*n))^{-1}h_E(y)(n^*n)h_E(y)(p_x)=
\begin{cases}
1 & \text{if $x=y$}\\
0 & \text{if $x\not=y$.}
\end{cases}
\end{align*} 
Hence by the defining property of $p_x$ we have
\begin{equation}\label{eq: n*n and hE}
n^*np_x=h_E(x)(n^*n)p_x.
\end{equation} 
We now use (\ref{eq: n*n and hE}) and (a) to get (c): 
\[
np_x = n((h_E(x)(n^*n))^{-1}n^*np_x)=(h_E(x)(n^*n))^{-1}np_x 
n^*n=p_{\alpha_n(x)}n.\qedhere
\] 
\end{proof}

\begin{lemma}\label{lem:corner}
	Suppose $x\in\partial E_\is$. If $x$ is not eventually periodic, then 
	$p_xC^*(E)p_x=p_x\mathcal{D}(E)p_x=\C p_x$. If $x=\mu\eta\eta\eta\cdots$ 
	for some $\mu\in E^*$ and some simple loop $\eta\in E^*$ with 
	$s(\eta)=r(\mu)$, then $p_xC^*(E)p_x$ is isomorphic to $C(\T)$ by the 
	isomorphism mapping $p_xs_\mu s_\eta s_\mu^*p_x$ to the identity function 
	on $\T$, and $p_x\mathcal{D}(E)p_x=\C p_x$. 
\end{lemma}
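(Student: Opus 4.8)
The plan is to work inside the corner $p_x C^*(E) p_x$ using the spanning set $C^*(E)=\clsp\{s_\mu s_\nu^*\}$ and the computations of Lemma~\ref{lem: properties of alpha n}(c), which give $s_\mu s_\nu^* p_x = p_{\alpha_{s_\mu s_\nu^*}(x)} s_\mu s_\nu^*$ whenever $x\in\dom(s_\mu s_\nu^*)=Z(\nu)$, and $s_\mu s_\nu^* p_x = 0$ otherwise. Consequently $p_x s_\mu s_\nu^* p_x$ is nonzero only when $x\in Z(\nu)$ and $\alpha_{s_\mu s_\nu^*}(x)=x$; by Lemma~\ref{lem: alpha sub spanning elt } this means $x=\nu z$ and $\mu z = \nu z$, i.e. writing $x = \nu z$ we need $\mu z = x$ as well. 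Since $C^*(E) = \clsp\{s_\mu s_\nu^*\}$ and multiplication by $p_x$ on both sides is continuous, $p_x C^*(E) p_x$ is the closed span of the surviving elements $p_x s_\mu s_\nu^* p_x$; so the first step is to identify exactly which pairs $(\mu,\nu)$ survive.

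\textbf{Case 1: $x$ not eventually periodic.} Here I claim the only surviving terms have $\alpha_{s_\mu s_\nu^*}$ acting as the identity near $x$ in a way forced by $\mu=\nu$. If $x\in Z(\nu)$ and $\mu z = \nu z = x$ where $x=\nu z$, then $s(\mu)=s(\nu)$ and $r(\mu)=r(\nu)$; comparing the two expressions $x=\mu z$ and $x = \nu z$ for the same boundary path, if $|\mu|=|\nu|$ we immediately get $\mu=\nu$, while if (say) $|\mu|>|\nu|$ then $\mu = \nu\mu'$ with $\mu' $ a loop and $x = \nu \mu' z = \nu z$, forcing $z = \mu' z$, so $z$ — hence $x$ — is eventually periodic, a contradiction. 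Thus the only contributions are $p_x s_\mu s_\mu^* p_x$ with $x\in Z(\mu)$, and each such element equals $p_x$ by the defining property of $p_x$ (since $h_E(x)(s_\mu s_\mu^*)=1$). Therefore $p_x C^*(E) p_x = \C p_x$, and in particular $p_x \mathcal D(E) p_x = \C p_x$.

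\textbf{Case 2: $x=\mu\eta^\infty$ with $\eta$ a simple loop, $s(\eta)=r(\mu)$.} Set $w := p_x s_\mu s_\eta s_\mu^* p_x$. First check $w$ is well-defined and unitary in the corner: using Lemma~\ref{lem: properties of alpha n}(c) and Lemma~\ref{lem: alpha sub spanning elt }, $\alpha_{s_\mu s_\eta s_\mu^*}$ sends $x = \mu\eta^\infty$ to $\mu\eta\eta^\infty = x$, so $w\ne 0$ and $w^* w = p_x s_\mu s_\eta^* s_\mu^* s_\mu s_\eta s_\mu^* p_x = p_x s_\mu s_\mu^* p_x = p_x$ (using $s_\mu^* s_\mu = p_{r(\mu)}\ge s_\eta s_\eta^*$ and $x\in Z(\mu)$), and similarly $w w^* = p_x$; so $w$ is a unitary in $p_x C^*(E) p_x$. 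Next, by the same surviving-term analysis: a term $p_x s_\alpha s_\beta^* p_x$ is nonzero iff $x\in Z(\alpha)\cap Z(\beta)$, which (since $x=\mu\eta^\infty$) forces $\alpha$ and $\beta$ to be initial segments of $\mu\eta^\infty$; writing each as $\mu\eta^{j}\eta'$ for an initial segment $\eta'$ of $\eta$, one computes that $p_x s_\alpha s_\beta^* p_x$ is a scalar multiple of a power $w^k$ (with $k=$ (length difference)$/|\eta|$, which must be an integer by the simple-loop hypothesis, for otherwise $\alpha_{s_\alpha s_\beta^*}(x)\ne x$). Hence $p_x C^*(E) p_x = \clsp\{w^k : k\in\Z\}$, which is a commutative $C^*$-algebra with unit $p_x$ singly generated by the unitary $w$; so it is a quotient of $C(\T)$ via $z\mapsto w$. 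To see the quotient is all of $C(\T)$, i.e. $\operatorname{sp}(w)=\T$, it suffices to exhibit a family of states separating the $w^k$: for each $t\in\T$ the map $s_\mu s_\eta s_\mu^*\mapsto t$ extends (by the universal property applied to a perturbed Cuntz–Krieger family, or by composing the canonical map $C^*(E)\to p_x C^*(E) p_x$ with the evaluation coming from the gauge action, which multiplies $s_\mu s_\eta s_\mu^*$ by the gauge parameter) to a character sending $w^k$ to $t^k$. Finally $p_x \mathcal D(E) p_x$: every $s_\lambda s_\lambda^*$ with $x\in Z(\lambda)$ gives $p_x s_\lambda s_\lambda^* p_x = p_x$, and all others give $0$, so $p_x \mathcal D(E) p_x = \C p_x$.

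\textbf{Main obstacle.} The routine bookkeeping (which $(\mu,\nu)$ survive, and that the surviving corner elements are exactly scalar multiples of powers of $w$) is mechanical given Lemmas~\ref{lem: alpha sub spanning elt } and \ref{lem: properties of alpha n}. The genuine point is the lower bound $\operatorname{sp}(w)=\T$ in Case~2 — equivalently, that $w$ generates a full copy of $C(\T)$ and not a proper quotient. The clean way is to produce, for each $t\in\T$, a $*$-homomorphism from $p_x C^*(E) p_x$ to $\C$ sending $w\mapsto t$; I expect to get these from the gauge action $\gamma$ on $C^*(E)$ composed with the evaluation-at-$x$ character $h_E(x)$ on $\mathcal D(E)$-valued "diagonal" data — more precisely, the point-evaluation at $x$ gives a character of $p_x\mathcal D(E)p_x$ and combining it with $\gamma_t$ detects the power of $w$. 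Making this precise (checking it descends to the corner and is multiplicative there) is the one step deserving care; everything else follows from the corner decomposition and the defining property of $p_x$.
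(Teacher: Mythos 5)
Your proposal is correct, but it takes a genuinely different route from the paper. The paper's proof is a two-line application of Proposition~\ref{prop:groupoid}: under the isomorphism $C^*(E)\cong C^*(\grp_E)$, the projection $p_x$ becomes the characteristic function of the isolated unit $(x,0,x)$, so $p_xC^*(E)p_x$ is identified with the $C^*$-algebra of the isotropy group $\iso(x)$, which is trivial in the aperiodic case and is $\{(x,k|\eta|,x):k\in\Z\}\cong\Z$ in the periodic case, whence $C^*(\Z)\cong C(\T)$ with the generator $(x,|\eta|,x)$ corresponding to $p_xs_\mu s_\eta s_\mu^*p_x$. You instead work entirely inside $C^*(E)$ with the spanning family $\{s_\alpha s_\beta^*\}$ and Lemma~\ref{lem: properties of alpha n}(c), classify the surviving compressions $p_xs_\alpha s_\beta^*p_x$, and identify the corner as the closed span of powers of the unitary $w=p_xs_\mu s_\eta s_\mu^*p_x$; this is more computational but avoids invoking the groupoid model. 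The one step you rightly flag as delicate --- that the spectrum of $w$ is all of $\T$ rather than a proper closed subset --- is handled most cleanly not by constructing characters but by your parenthetical gauge-action remark: since $\gamma_z(p_x)=p_x$ (as $p_x\in\mathcal{D}(E)$), each $\gamma_z$ restricts to a $*$-automorphism of the corner sending $w$ to $z^{|\eta|}w$, so the spectrum of $w$ is a nonempty closed subset of $\T$ invariant under every rotation, hence equals $\T$; the phrase ``canonical map $C^*(E)\to p_xC^*(E)p_x$'' should be avoided, as compression is not a homomorphism, but the restriction of $\gamma_z$ to the corner does the job. What each approach buys: the paper's argument is shorter and makes transparent why $C(\T)$ appears (it is the group $C^*$-algebra of the isotropy), while yours is self-contained at the level of the Cuntz--Krieger relations and would survive in settings where one does not yet have (or does not want to invoke) the groupoid picture.
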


\begin{proof}
	Let $(\grp_E)_x^x$ denote the isotropy group 
	$\{\gamma\in\grp:s(\gamma)=r(\gamma)=x\}$ of $(x,0,x)$. Assume that $x$ is 
	not eventually periodic. Then $(\grp_E)_x^x=\{(x,0,x)\}$. Proposition 
	\ref{prop:groupoid} implies that there is an isomorphism from 
	$p_xC^*(E)p_x$ to $C^*((\grp_E)_x^x)$, and consequently  
	$p_xC^*(E)p_x=p_x\mathcal{D}(E)p_x=\C p_x$, completing the first assertion 
	in the lemma.
	
	Assume then that $x=\mu\eta\eta\eta\cdots$ for some $\mu\in E^*$ and some 
	simple loop $\eta\in E^*$ with $s(\eta)=r(\mu)$. We then have that 
	$(\grp_E)_x^x=\{(x,k|\eta|,x):k\in\Z\}$. Now Proposition 
	\ref{prop:groupoid} implies that there is an isomorphism from $p_xC^*(E)p_x$ to 
	$C(\T)$ which maps $p_xs_\mu s_\eta s_\mu^*p_x$ to the identity function 
	on $\T$, and that $p_x\mathcal{D}(E)p_x=\C p_x$.
\end{proof}

The extended Weyl groupoid associated to $(C^*(E),\DD(E))$ is built using an 
equivalence relation defined on pairs of normalisers and boundary paths. For 
isolated boundary paths $x$ the equivalence relation is defined using a 
unitary in the corner of $C^*(E)$ determined by $p_x$.

\begin{lemma}\label{lem: props of the unitary}
Let $E$ be a graph. For $x\in\partial 
E_\is$, $n_1,n_2\in N(\mathcal{D}(E))$, $x\in\dom(n_1)\cap\dom(n_2)$, and 
$\alpha_{n_1}(x)=\alpha_{n_2}(x)$, we denote
\[
U_{(x,n_1,n_2)}:=(h_E(x)(n_1^*n_1n_2^*n_2))^{-1/2}p_xn_1^*n_2p_x.
\]
Then
\begin{enumerate}
\item[(1)] 
$U_{(x,n_1,n_2)}U_{(x,n_1,n_2)}^*=U_{(x,n_1,n_2)}^*U_{(x,n_1,n_2)}=p_x$, and
\item[(2)] $U_{(x,n_1,n_2)}^*=U_{(x,n_2,n_1)}$.
\end{enumerate}
Moreover, if $n_3\in N(\mathcal{D}(E))$, $x\in\dom(n_3)$, and 
$\alpha_{n_3}(x)=\alpha_{n_1}(x)=\alpha_{n_2}(x)$, then
\begin{enumerate}  
\item[(3)] $U_{(x,n_1,n_2)}U_{(x,n_2,n_3)}=U_{(x,n_1,n_3)}$. 
\end{enumerate}
\end{lemma}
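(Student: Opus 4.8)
The plan is to verify each identity by computing how the relevant element acts on points of $\partial E$ via $h_E$, using the defining property \eqref{eq: defining prop of alphas} of the $\alpha_n$ together with the corner results for isolated points. The key preliminary observation, which I would establish first, is that for $n\in N(\DD(E))$ and $x\in\partial E_\is\cap\dom(n)$ we have $p_xn^*np_x = h_E(x)(n^*n)p_x$; this is essentially Equation~\eqref{eq: n*n and hE} from the proof of Lemma~\ref{lem: properties of alpha n} (using that $n^*n\in\DD(E)$ commutes with $p_x$). More generally, since $n_1^*n_2$ need not lie in $\DD(E)$, the crucial point is that $p_x C^*(E) p_x$ is either $\C p_x$ or $C(\T)$ by Lemma~\ref{lem:corner}, so in particular $p_x n_1^* n_2 p_x$ is a normal element of this commutative corner; this is what makes the polar-decomposition-style computations below legitimate.

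For (1), I would compute $U_{(x,n_1,n_2)}U_{(x,n_1,n_2)}^* = (h_E(x)(n_1^*n_1n_2^*n_2))^{-1}\, p_x n_1^* n_2 p_x n_2^* n_1 p_x$. Inserting $p_x n_2^* n_2 p_x = h_E(x)(n_2^*n_2) p_x$ in the middle (legitimate since $p_x n_2 p_x = p_{\alpha_{n_2}(x)} n_2 p_x$-type manipulations via Lemma~\ref{lem: properties of alpha n}(c), or more directly since the middle factor $n_2 p_x n_2^*$ is a scalar multiple of $p_{\alpha_{n_2}(x)}$ by Lemma~\ref{lem: properties of alpha n}(a), and $\alpha_{n_2}(x)=\alpha_{n_1}(x)$), one reduces to $(h_E(x)(n_1^*n_1))^{-1} p_x n_1^* p_{\alpha_{n_1}(x)} n_1 p_x = (h_E(x)(n_1^*n_1))^{-1} p_x n_1^*n_1 p_x = p_x$ using Lemma~\ref{lem: properties of alpha n}(b) and \eqref{eq: n*n and hE}. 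The computation of $U_{(x,n_1,n_2)}^*U_{(x,n_1,n_2)}$ is symmetric. Statement (2) is immediate from $(p_x n_1^* n_2 p_x)^* = p_x n_2^* n_1 p_x$ and the symmetry of the scalar $h_E(x)(n_1^*n_1n_2^*n_2) = h_E(x)(n_2^*n_2n_1^*n_1)$ (these commute, both being in $\DD(E)$).

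For (3), I would expand the product as
\[
U_{(x,n_1,n_2)}U_{(x,n_2,n_3)} = \bigl(h_E(x)(n_1^*n_1n_2^*n_2)\,h_E(x)(n_2^*n_2n_3^*n_3)\bigr)^{-1/2}\, p_x n_1^* n_2 p_x n_2^* n_3 p_x,
\]
then replace the middle $p_x n_2^* n_2 p_x$ by $h_E(x)(n_2^*n_2) p_x$ (again using Lemma~\ref{lem: properties of alpha n}(a),(b) and that $\alpha_{n_1}(x)=\alpha_{n_2}(x)=\alpha_{n_3}(x)$), obtaining $h_E(x)(n_2^*n_2)\bigl(h_E(x)(n_1^*n_1n_2^*n_2)h_E(x)(n_2^*n_2n_3^*n_3)\bigr)^{-1/2} p_x n_1^* n_3 p_x$; the scalar simplifies to $\bigl(h_E(x)(n_1^*n_1n_3^*n_3)\bigr)^{-1/2}$, which is exactly the normalising constant for $U_{(x,n_1,n_3)}$. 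The main obstacle I anticipate is handling the scalar $h_E(x)(\cdot)$ bookkeeping carefully — in particular justifying that all these $h_E(x)$-values are strictly positive (which holds since $x\in\dom(n_i)$ for each $i$) so that the fractional powers make sense, and checking that the various $\DD(E)$-elements one commutes past $p_x$ genuinely commute. None of the steps is deep, but the identity $p_x n^* n p_x = h_E(x)(n^*n)p_x$ and Lemma~\ref{lem: properties of alpha n} must be invoked at precisely the right moments; once that is set up, everything reduces to scalar arithmetic in the commutative corner $p_x C^*(E) p_x$.
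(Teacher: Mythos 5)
Your proposal is correct and follows essentially the same route as the paper: both reduce everything to the identity $p_xn_1^*n_2p_xn_2^*n_1p_x=h_E(x)(n_1^*n_1n_2^*n_2)p_x$ obtained from Lemma~\ref{lem: properties of alpha n}(a)--(c), with the rest being scalar bookkeeping via the character $h_E(x)$ (whose values are positive since $x\in\dom(n_i)$). The only cosmetic differences are that you compute $U_{(x,n_1,n_2)}^*U_{(x,n_1,n_2)}$ by the symmetric direct calculation where the paper invokes commutativity of the corner $p_xC^*(E)p_x$, and your phrase ``the middle $p_xn_2^*n_2p_x$'' should read ``the middle factor $n_2p_xn_2^*$'', as you in fact note parenthetically.
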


\begin{proof}
Suppose that $x\in\partial 
E_\is$, $n_1,n_2\in N(\mathcal{D}(E))$, $x\in\dom(n_1)\cap\dom(n_2)$, and 
$\alpha_{n_1}(x)=\alpha_{n_2}(x)$. First note that since 
$x\in\dom(n_1)\cap\dom(n_2)$, we have $h_E(n_1^*n_1),h_E(n_2^*n_2)>0$, and the 
formula for $U_{(x,n_1,n_2)}$ makes sense. We now claim that
\begin{equation}\label{eq: key eq for U a unitary}
p_xn_1^*n_2p_xn_2^*n_1p_x=h_E(x)(n_1^*n_1n_2^*n_2)p_x.
\end{equation}
To see this, we apply identities (a) and (b) of Lemma~\ref{lem: properties of 
alpha n} to get
\begin{align*}
p_xn_1^*n_2p_xn_2^*n_1p_x &= p_xn_1^*(n_2p_xn_2^*)n_1p_x\\
&= 
p_xn_1^*\big(h_E(x)(n_2^*n_2)p_{\alpha_{n_2}(x)}\big)n_1p_x\\
&= h_E(x)(n_2^*n_2)p_xn_1^*p_{\alpha_{n_1}(x)}n_1p_x\\
&= 
h_E(x)(n_1^*n_1)h_E(x)(n_2^*n_2)p_x\\
&= h_E(x)(n_1^*n_1n_2^*n_2)p_x.
\end{align*}
We now use (\ref{eq: key eq for U 
a unitary}) to get
\[
U_{(x,n_1,n_2)}U_{(x,n_1,n_2)}^*=(h_E(n_1^*n_1n_2^*n_2))^{-1}
p_xn_1^*n_2p_xn_2^*n_1p_x=p_x
\]
Similarly, and using that $p_xC^*(E)p_x$ is commutative, we have
\begin{align*}
U_{(x,n_1,n_2)}^*U_{(x,n_1,n_2)}
&=(h_E(n_1^*n_1n_2^*n_2))^{-1}p_xn_2^*n_1p_xn_1^*n_2p_x\\
&=(h_E(n_1^*n_1n_2^*n_2))^{-1}p_xn_1^*n_2p_xn_2^*n_1p_x\\
&=p_x.
\end{align*}
So (1) holds.

Identity (2) holds because $n_1^*n_1,n_2^*n_2\in\DD(E)$, and hence
\begin{align*}
U_{(x,n_1,n_2)}^*:=(h_E(x)(n_1^*n_1n_2^*n_2))^{-1/2}p_xn_2^*n_1p_x&=
(h_E(x)(n_2^*n_2n_1^*n_1))^{-1/2}p_xn_2^*n_1p_x\\
&=U_{(x,n_2,n_1)}.
\end{align*}

We use identities (a) and (c) of Lemma~\ref{lem: properties of alpha n} to get
\begin{align*}
U_{(x,n_1,n_2)}U_{(x,n_2,n_3)}&=(h_E(x)(n_1^*n_1n_2^*n_2)h_E(x)(n_2^*n_2n_3^*n_3))^{-1/2}
p_xn_1^*n_2p_xn_2^*n_3p_x\\
&= (h_E(x)(n_2^*n_2))^{-1}(h_E(x)(n_1^*n_1n_3^*n_3))^{-1/2}p_xn_1^*
\big(h_E(x)(n_2^*n_2)
p_{\alpha_{n_2}(x)}\big)n_3p_x\\
&=(h_E(n_1^*n_1n_3^*n_3))^{-1/2}p_xn_1^*
p_{\alpha_{n_3}(x)}n_3p_x\\
&= (h_E(n_1^*n_1n_3^*n_3))^{-1/2}p_xn_1^*n_3p_x\\
&= U_{(x,n_1,n_3)}.
\end{align*}
So (3) holds. 
\end{proof}

\begin{notation}\label{notation: lambda}
For $x$, $n_1$ and $n_2$ as in Lemma~\ref{lem: props of the unitary} we 
denote 
\[
\lambda_{(x,n_1,n_2)}:=h_E(x)(n_1^*n_1n_2^*n_2).
\]
So $U_{(x,n_1,n_2)}=\lambda_{(x,n_1,n_2)}^{-1/2}p_xn_1^*n_2p_x$. It follows from identity (1) of Lemma \ref{lem: props of the unitary} that $U_{(x,n_1,n_2)}$ is a unitary element of $p_xC^*(E)p_x$. We denote by 
$[U_{(x,n_1,n_2)}]_1$ the class of $U_{(x,n_1,n_2)}$ in 
$K_1(p_xC^*(E)p_x)$.
\end{notation}

\begin{prop}\label{prop: equiv relation}
Let $E$ be a graph. For each $x_1,x_2\in\partial E$ and $n_1,n_2\in 
N(\mathcal{D}(E))$ such that $x_1\in\dom(n_1)$ and $x_2\in\dom(n_2)$ we write 
$(n_1,x_1)\sim (n_2,x_2)$ if either
\begin{enumerate}
\item[(a)] $x_1=x_2\in\partial E_\is$, 
$\alpha_{n_1}(x_1)=\alpha_{n_2}(x_2)$, and $[U_{(x_1,n_1,n_2)}]_1=0$; or
\item[(b)] $x_1=x_2\notin\partial E_\is$ and there is an open set $V$ such 
that $x_1\in 
V\subseteq\dom(n_1)\cap\dom(n_2)$ and $\alpha_{n_1}(y)=\alpha_{n_2}(y)$ for 
all $y\in V$. 
\end{enumerate}
Then $\sim$ is an equivalence relation on $\{(n,x):n\in 
N(\mathcal{D}(E)),\ x\in\dom(n)\}$.
\end{prop}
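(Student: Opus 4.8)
The plan is to verify the three defining properties of an equivalence relation directly, keeping the two clauses (a) and (b) separate. A useful preliminary observation is that each clause forces the two boundary paths to be equal, and that for a fixed common point $x$ clause (a) can apply precisely when $x\in\partial E_\is$ while clause (b) can apply precisely when $x\notin\partial E_\is$; hence there is no interaction between the clauses, and in the transitivity argument all pairs involved will automatically be related through the same clause. The only inputs needed are Lemma~\ref{lem: props of the unitary}, the identity \eqref{eq: n*n and hE}, and the standard facts that $K_1$ of a unital $C^*$-algebra is a group and is additive on products of unitaries, applied to $p_xC^*(E)p_x$ (which is unital with unit $p_x$).

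\emph{Reflexivity and symmetry.} For reflexivity, fix $(n,x)$ with $x\in\dom(n)$. If $x\notin\partial E_\is$, clause (b) holds with $V=\dom(n)$. If $x\in\partial E_\is$, then $\alpha_n(x)=\alpha_n(x)$ trivially, and it remains to check $[U_{(x,n,n)}]_1=0$. Using $n^*n\in\DD(E)$ together with \eqref{eq: n*n and hE} one gets $p_xn^*np_x=h_E(x)(n^*n)p_x$ and $\lambda_{(x,n,n)}=h_E(x)((n^*n)^2)=h_E(x)(n^*n)^2$, so $U_{(x,n,n)}=p_x$; since $p_x$ is the unit of $p_xC^*(E)p_x$, its $K_1$-class is $0$ and clause (a) holds. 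For symmetry, suppose $(n_1,x_1)\sim(n_2,x_2)$, so $x_1=x_2=:x$. If $x\notin\partial E_\is$ the condition in clause (b) is symmetric in $n_1,n_2$. If $x\in\partial E_\is$, then $\alpha_{n_1}(x)=\alpha_{n_2}(x)$ is symmetric, and by Lemma~\ref{lem: props of the unitary}(2) we have $U_{(x,n_2,n_1)}=U_{(x,n_1,n_2)}^*$, whence $[U_{(x,n_2,n_1)}]_1=-[U_{(x,n_1,n_2)}]_1=0$ in the group $K_1(p_xC^*(E)p_x)$, so $(n_2,x)\sim(n_1,x)$.

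\emph{Transitivity.} Suppose $(n_1,x_1)\sim(n_2,x_2)$ and $(n_2,x_2)\sim(n_3,x_3)$; then $x_1=x_2=x_3=:x$, and by the preliminary observation both relations hold through the same clause. If $x\notin\partial E_\is$, choose open neighbourhoods $V_{12}\subseteq\dom(n_1)\cap\dom(n_2)$ and $V_{23}\subseteq\dom(n_2)\cap\dom(n_3)$ of $x$ on which $\alpha_{n_1}=\alpha_{n_2}$ and $\alpha_{n_2}=\alpha_{n_3}$ respectively; then $V:=V_{12}\cap V_{23}$ is an open neighbourhood of $x$ contained in $\dom(n_1)\cap\dom(n_3)$ on which $\alpha_{n_1}=\alpha_{n_3}$, so $(n_1,x)\sim(n_3,x)$ via clause (b). If $x\in\partial E_\is$, then $\alpha_{n_1}(x)=\alpha_{n_2}(x)=\alpha_{n_3}(x)$, and Lemma~\ref{lem: props of the unitary}(3) gives $U_{(x,n_1,n_3)}=U_{(x,n_1,n_2)}U_{(x,n_2,n_3)}$, so $[U_{(x,n_1,n_3)}]_1=[U_{(x,n_1,n_2)}]_1+[U_{(x,n_2,n_3)}]_1=0$ and $(n_1,x)\sim(n_3,x)$ via clause (a).

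I do not expect a genuine obstacle: the whole argument is bookkeeping once Lemma~\ref{lem: props of the unitary} is in hand. The two points deserving a moment's care are the reflexivity computation $U_{(x,n,n)}=p_x$ (so that its $K_1$-class vanishes) and the observation that passing to $K_1$ converts identities (2) and (3) of Lemma~\ref{lem: props of the unitary} into the group relations $[u^*]_1=-[u]_1$ and $[uv]_1=[u]_1+[v]_1$ in $p_xC^*(E)p_x$.
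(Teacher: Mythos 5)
Your proof is correct and follows essentially the same route as the paper: the paper likewise treats reflexivity and the non-isolated case as immediate, and establishes symmetry and transitivity at isolated points by applying identities (2) and (3) of Lemma~\ref{lem: props of the unitary} and passing to $K_1(p_xC^*(E)p_x)$. Your explicit computation $U_{(x,n,n)}=p_x$ for reflexivity is a detail the paper omits but is a correct and welcome addition.
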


\begin{proof}
The only nontrivial parts to prove are that $\sim$ is symmetric and transitive 
when the boundary paths are isolated points. Suppose $(n_1,x_1)\sim (n_2,x_2)$ 
with $x:=x_1=x_2\in\partial E_\is$. We know from Lemma~\ref{lem: props of the 
unitary}(2) that $U_{(x,n_2,n_1)}=U_{(x,n_1,n_2)}^*$. So
\[
[U_{(x,n_1,n_2)}]_1=0\Longrightarrow 
[U_{(x,n_2,n_1)}]_1=[U_{(x,n_1,n_2)}^*]_1=0,
\]
and hence $(n_2,x_2)\sim (n_1,x_1)$.

For transitivity, suppose $(n_1,x_1)\sim (n_2,x_2)$ and $(n_2,x_2)\sim 
(n_3,x_3)$ with 
$x:=x_1=x_2=x_3\in\partial E_\is$. We know from Lemma~\ref{lem: props of the 
unitary}(2) that $U_{(x,n_1,n_2)}U_{(x,n_2,n_3)}=U_{(x,n_1,n_3)}$. So
\[
[U_{((x,n_1,n_2)}]_1=0=[U_{(x,n_2,n_3)}]_1\Longrightarrow 
[U_{(x,n_1,n_3)}]_1=[U_{(x,n_1,n_2)}]_1[U_{(x,n_2,n_3)}]_1=0,
\]
and hence $(n_1,x_1)\sim (n_3,x_3)$.
\end{proof}

\begin{prop}\label{prop: the groupoid}
Let $E$ be a graph, and $\sim$ the equivalence relation on $\{(n,x):n\in 
N(\mathcal{D}(E)),\ x\in\dom(n)\}$ from Proposition~\ref{prop: equiv 
relation}. Denote the collection of equivalence classes by 
$\grp_{(C^*(E),\mathcal{D}(E))}$. Define a partially-defined product on 
$\grp_{(C^*(E),\mathcal{D}(E))}$ by 
\[
[(n_1,x_1)][(n_2,x_2)]:=[(n_1n_2, x_2)]\quad\text{ if $\alpha_{n_2}(x_2)=x_1$},
\]
and undefined otherwise. Define an inverse map by 
$[(n,x)]^{-1}:=[(n^*,\alpha_n(x))]$. Then these operations make $\grp_{(C^*(E),\mathcal{D}(E))}$ into a
groupoid. 
\end{prop}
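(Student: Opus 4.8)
The plan is to verify that the stated product and inverse are well defined on $\sim$-classes, and then to check the groupoid axioms. I would begin with the observation that $(n_1,x_1)\sim(n_2,x_2)$ forces $x_1=x_2$ and $\alpha_{n_1}(x_1)=\alpha_{n_2}(x_2)$ (immediate from both clauses of Proposition~\ref{prop: equiv relation}), so that $s\colon[(n,x)]\mapsto x$ and $r\colon[(n,x)]\mapsto\alpha_n(x)$ are well defined and the condition $\alpha_{n_2}(x_2)=x_1$ depends only on the classes. I would also assemble the standard facts about normalisers that are needed: $N(\DD(E))$ is closed under products and adjoints; $nn^{*},n^{*}n\in\DD(E)$ with $\dom(n)=\dom(n^{*}n)$ and $\ran(n)=\dom(nn^{*})$; $\alpha_{n^{*}}=\alpha_n^{-1}$ and $\alpha_{mn}=\alpha_m\circ\alpha_n$ as partial homeomorphisms, so in particular $\dom(mn)=\{x\in\dom(n):\alpha_n(x)\in\dom(m)\}$; and $\alpha_d=\id_{\dom(d)}$ for $d\in\DD(E)$ (from \eqref{eq: defining prop of alphas} and multiplicativity of $h_E(x)$ on $\DD(E)$). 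Finally, for $x\in\partial E_\is$ and $d_1,d_2\in\DD(E)$ with $x\in\dom(d_1)\cap\dom(d_2)$, the unitary $U_{(x,d_1,d_2)}$ lies in $p_x\DD(E)p_x=\C p_x$ by Lemma~\ref{lem:corner}, hence is homotopic to $p_x$ through unitaries and so has trivial $K_1$-class; combined with clause~(b) for non-isolated $x$, this shows $[(d,x)]$ is independent of the choice of $d\in\DD(E)$ with $x\in\dom(d)$, and I write $u_x$ for this common class.

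Well-definedness of the product amounts to the assertion: if $(n_1,x)\sim(n_1',x)$, $(n_2,y)\sim(n_2',y)$ and $\alpha_{n_2}(y)=x$, then $(n_1n_2,y)\sim(n_1'n_2',y)$. When $y$ (hence $x=\alpha_{n_2}(y)$) is not isolated, this is routine: intersecting the neighbourhoods from clause~(b) with a suitable $\alpha_{n_2}$-preimage yields an open $V\ni y$ contained in $\dom(n_1n_2)\cap\dom(n_1'n_2')$ on which $\alpha_{n_1n_2}=\alpha_{n_1}\circ\alpha_{n_2}$ and $\alpha_{n_1'n_2'}=\alpha_{n_1'}\circ\alpha_{n_2'}$ agree. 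The crux is the case $y\in\partial E_\is$ (so $x\in\partial E_\is$ too, $\alpha_{n_2}$ being a homeomorphism between open sets), where I would prove the multiplicativity identity
\[
U_{(y,\,n_1n_2,\,n_1'n_2')}=V^{*}\,U_{(x,n_1,n_1')}\,V\cdot U_{(y,n_2,n_2')},\qquad V:=\bigl(h_E(y)(n_2^{*}n_2)\bigr)^{-1/2}\,n_2\,p_y.
\]
Here $V^{*}V=p_y$ and $VV^{*}=p_x$ by \eqref{eq: n*n and hE} and Lemma~\ref{lem: properties of alpha n}(a), so $b\mapsto V^{*}bV$ is a $*$-isomorphism $p_xC^{*}(E)p_x\to p_yC^{*}(E)p_y$ and hence induces an isomorphism on $K_1$. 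The identity itself is a computation: use $n_2p_y=p_xn_2$ and $n_2'p_y=p_xn_2'$ (Lemma~\ref{lem: properties of alpha n}(c)) to move $n_2^{*}$ and $n_2'$ to the outside of $U_{(y,n_1n_2,n_1'n_2')}$, collapse $n_2p_yn_2^{*}=h_E(y)(n_2^{*}n_2)p_x$ (Lemma~\ref{lem: properties of alpha n}(a)), and check the matching of normalisations $\lambda_{(y,n_1n_2,n_1'n_2')}=\lambda_{(x,n_1,n_1')}\,\lambda_{(y,n_2,n_2')}$ via \eqref{eq: defining prop of alphas} and multiplicativity of $h_E(y)$ on $\DD(E)$. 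Applying $[\,\cdot\,]_1$ to the identity and using the $K_1$-isomorphism $b\mapsto V^{*}bV$, the hypotheses $[U_{(x,n_1,n_1')}]_1=0=[U_{(y,n_2,n_2')}]_1$ give $[U_{(y,n_1n_2,n_1'n_2')}]_1=0$, which is clause~(a) for $(n_1n_2,y)\sim(n_1'n_2',y)$. Well-definedness of the inverse is handled the same way: for $(n,x)\sim(n',x)$ the non-isolated case is a neighbourhood argument, and when $x\in\partial E_\is$, with $x'=\alpha_n(x)$ and $v:=\bigl(h_E(x)(n^{*}n)\bigr)^{-1/2}n\,p_x$ a partial isometry from the $p_x$-corner onto the $p_{x'}$-corner, a similar computation gives $U_{(x',n^{*},n'^{*})}=v\,U_{(x,n',n)}\,v^{*}$; since $U_{(x,n',n)}=U_{(x,n,n')}^{*}$ has $K_1$-class $-[U_{(x,n,n')}]_1=0$ by Lemma~\ref{lem: props of the unitary}(2), we conclude $[U_{(x',n^{*},n'^{*})}]_1=0$.

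It remains to check the groupoid axioms. That $\bigl([(n,x)]^{-1}\bigr)^{-1}=[(n,x)]$ follows from $(n^{*})^{*}=n$ and $\alpha_{n^{*}}=\alpha_n^{-1}$. Writing $g_i=[(n_i,x_i)]$, the products $(g_1g_2)g_3$ and $g_1(g_2g_3)$ are both defined exactly when $\alpha_{n_2}(x_2)=x_1$ and $\alpha_{n_3}(x_3)=x_2$ --- using $\alpha_{n_2n_3}=\alpha_{n_2}\circ\alpha_{n_3}$ for the second grouping and the domain formula for $\dom(mn)$ to see $x_3\in\dom(n_1n_2n_3)$ --- and then both equal $[(n_1n_2n_3,x_3)]$ by associativity of multiplication in $C^{*}(E)$. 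For the units, $n^{*}n,nn^{*}\in\DD(E)$ give $[(n,x)]^{-1}[(n,x)]=[(n^{*}n,x)]=u_x$ and $[(n,x)][(n,x)]^{-1}=[(nn^{*},\alpha_n(x))]=u_{\alpha_n(x)}$; it then remains to prove, for composable $g=[(n,x)]$ and $h=[(m,y)]$ (so $\alpha_m(y)=x$), that $g^{-1}(gh)=h$ and $(gh)h^{-1}=g$, which reduce to $(n^{*}nm,y)\sim(m,y)$ and $(nmm^{*},x)\sim(n,x)$ respectively. The non-isolated cases follow from $\alpha_{n^{*}n}=\id$ and $\alpha_{mm^{*}}=\id$ on their (open) domains together with continuity of $\alpha_m$, and when $y\in\partial E_\is$ (resp.\ $x\in\partial E_\is$) a direct computation in the style of the previous paragraph --- using Lemma~\ref{lem: properties of alpha n}(a),(c), \eqref{eq: defining prop of alphas}, \eqref{eq: n*n and hE}, and multiplicativity of $h_E$ on $\DD(E)$ --- shows $U_{(y,n^{*}nm,m)}=p_y$ (resp.\ $U_{(x,nmm^{*},n)}=p_x$), which has trivial $K_1$-class.

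I expect the main obstacle to be the isolated-point case of well-definedness of the product: establishing the multiplicativity identity for the unitaries $U_{(\cdot,\cdot,\cdot)}$, and in particular verifying that the scalar normalisations $\lambda_{(\cdot,\cdot,\cdot)}$ multiply correctly. Everything else is either bookkeeping with the commutation relations of Lemma~\ref{lem: properties of alpha n} or a routine clause~(b) neighbourhood argument.
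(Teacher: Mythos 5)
Your proof is correct and follows essentially the same route as the paper's: the non-isolated case is the identical neighbourhood argument, and your exact factorisation $U_{(y,n_1n_2,n_1'n_2')}=V^*U_{(x,n_1,n_1')}V\cdot U_{(y,n_2,n_2')}$ (together with the analogous identity for inverses) is a tidier packaging of the paper's computation, which likewise moves $n_2$ and $n_2'$ past the projections via Lemma~\ref{lem: properties of alpha n} and then substitutes the homotopies $U\simeq p$ step by step. The only substantive difference is that you also verify associativity, the unit identities and $g^{-1}(gh)=h$, which the paper explicitly leaves to the reader.
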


\begin{proof}
We only check that composition and inversion are well-defined. That 
composition is associative and every element is composable with its inverse 
(in either direction) is left to the reader. To see that composition is 
well-defined, suppose 
$[(n_1,x_1)]=[(n_1',x_1')]$ and $[(n_2,x_2)]=[(n_2',x_2')]$ with $[(n_1,x_1)]$ 
and $[(n_2,x_2)]$ composable. We need to show that $[(n_1',x_1')]$ and 
$[(n_2',x_2')]$ are also composable with 
\begin{equation}\label{comp_grp}
[(n_1n_2,x_2)]=[(n_1'n_2',x_2')].
\end{equation}
We immediately know that $x_1=x_1'$, $x_2=x_2'$, $x_2=\alpha_{n_2}^{-1}(x_1)$, 
$\alpha_{n_1}(x_1)=\alpha_{n_1'}(x_1')$, and 
$\alpha_{n_2}(x_2)=\alpha_{n_2'}(x_2')$. This gives
\[
\alpha_{n_2'}^{-1}(x_1')=\alpha_{n_2'}^{-1}(x_1)=\alpha_{n_2'}^{-1}(\alpha_{n_2}(x_2))=\alpha_{n_2'}^{-1}(\alpha_{n_2'}(x_2'))=x_2'.
\]
So $\alpha_{n_2'}(x_2')=x_1'$, and hence $[(n_1',x_1')]$ and $[(n_2',x_2')]$ 
are composable.

To see that \eqref{comp_grp} holds we have two cases: \\
\emph{Case 1}: Suppose $x_1 \notin \partial E_\is$.  Then $x_1'=x_1 \notin 
\partial E_\is$, $x_2=\alpha_{n_2}^{-1}(x_1) \notin \partial E_\is$, and $x_2' 
= \alpha_{n_2'}^{-1}(x_1') \notin \partial E_\is$. We also know there exists 
an open set $V_1$ such that $x_1 \in V_1 \subseteq \dom(n_1) \cap \dom(n_1')$ 
with $\alpha_{n_1}|_{V_1}=\alpha_{n_1'}|_{V_1}$, and an open set $V_2$ such 
that $x_2 \in V_2 \subseteq \dom(n_2) \cap \dom(n_2')$ with 
$\alpha_{n_2}|_{V_2}=\alpha_{n_2'}|_{V_2}$. Let $V:=V_2 \cap 
\alpha_{n_2}^{-1}(V_1)$, which is an open set containing $x_2$. We claim that 
\[
V\subseteq \dom(n_1n_2)\cap \dom(n_1'n_2').
\]
To see this, let $x 
\in V$. Then using \eqref{eq: defining prop of alphas} we have
\[
h_E(x)((n_1n_2)^*n_1 n_2)=h_E(\alpha_{n_2}(x))(n_1^*n_1)h_E(x)(n_2^*n_2),
\]
which is positive because $\alpha_{n_2}(x)\in\dom(n_1)$ and $x\in\dom(n_2)$. 
So $V\subseteq\dom(n_1n_2)$. A similar argument gives 
$V\subseteq\dom(n_1'n_2')$, and so the claim holds. For each $x\in V$ we have 
$\alpha_{n_2}(x)=\alpha_{n_2'}(x)\in V_1$, which means
\[
\alpha_{n_1n_2}(x)=\alpha_{n_1}(\alpha_{n_2}(x))=\alpha_{n_1'}(\alpha_{n_2'}(x))
=\alpha_{n_1'n_2'}(x).
\]
So $\alpha_{n_1n_2}|_V=\alpha_{n_1'n_2'}|_V$. Hence 
$(n_1n_2,x_2)\sim(n_1'n_2',x_2')$, and (\ref{comp_grp}) holds 
in this case.

\emph{Case 2}: Suppose $x_1 \in \partial E_\is$. Then $x_1'=x_1 \in \partial 
E_\is$, $x_2=\alpha_{n_2}^{-1}(x_1) \in \partial E_\is$, and 
$x_2'=\alpha_{n_2'}^{-1}(x_1') \in \partial E_\is$. We also have 
$\alpha_{n_1}(x_1)=\alpha_{n_1'}(x_1')$, 
$\alpha_{n_2}(x_2)=\alpha_{n_2'}(x_2')$, and hence
\[
\alpha_{n_1 
n_2}(x_2)=\alpha_{n_1}(\alpha_{n_2'}(x_2'))=\alpha_{n_1}(x_1')=\alpha_{n_1'}(x_1')=\alpha_{n_1'}(\alpha_{n_2'}(x_2'))=\alpha_{n_1'n_2'}(x_2').
\]
To get $(n_1n_2,x_2)\sim(n_1'n_2',x_2')$ in this case it now suffices to show 
that
$[U_{(x_2,n_1n_2,n_1'n_2')}]_1=0$.
We use that $\alpha_{n_2}(x_2)=x_1$ and $\alpha_{n_2'}(x_2')=x_1'=x_1$ and 
apply Lemma~\ref{lem: properties of alpha n}(c) twice to get
\[
p_{x_2}n_2^*n_1^*n_1'n_2'p_{x_2} = (n_2p_{x_2})^*n_1^*n_1'(n_2'p_{x_2})=
(p_{\alpha_{n_2}(x_2)}n_2)^*n_1^*n_1'p_{\alpha_{n_2'}(x_2)}n_2'= 
n_2^*p_{x_1}n_1^*n_1'p_{x_1}n_2'.
\]
Now we can write
\begin{align*}
U_{(x_2,n_1n_2,n_1'n_2')} &= 
\lambda_{(x_2,n_1n_2,n_1'n_2')}^{-1/2}p_{x_2}n_2^*n_1^*n_1'n_2'p_{x_2}\\
&= 
\lambda_{(x_2,n_1n_2,n_1'n_2')}^{-1/2}n_2^*p_{x_1}n_1^*n_1'p_{x_1}n_2'\\
&= \lambda_{(x_2,n_1n_2,n_1'n_2')}^{-1/2}\lambda_{(x_1,n_1,n_1')}^{1/2}
n_2^*\big(\lambda_{(x_1,n_1,n_1')}^{-1/2}p_{x_1}n_1^*n_1'p_{x_1}\big)
n_2'\\
&=\lambda_{(x_2,n_1n_2,n_1'n_2')}^{-1/2}\lambda_{(x_1,n_1,n_1')}^{1/2}
n_2^*U_{(x_1,n_1,n_1')}n_2'
\end{align*} 
Since $(n_1,x_1)\sim (n_1',x_1')$ implies that $U_{(x_1,n_1,n_1')}$ is 
homotopic to $p_{x_1}$, we see that $U_{(x_2,n_1n_2,n_1'n_2')}$ is homotopic to
\[
\lambda_{(x_2,n_1n_2,n_1'n_2')}^{-1/2}\lambda_{(x_1,n_1,n_1')}^{1/2}
n_2^*p_{x_1}n_2'.
\]
We use Lemma~\ref{lem: properties of alpha n}(c) to get
\[
n_2^*p_{x_1}n_2'n_2^*p_{\alpha_{n_2}(x_2)}p_{\alpha_{n_2'}(x_2')}n_2'
=p_{x_2}n_2^*n_2'p_{x_2}.
\]
Hence $U_{(x_2,n_1n_2,n_1'n_2')}$ is homotopic to
\begin{align*}
\lambda_{(x_2,n_1n_2,n_1'n_2')}^{-1/2}\lambda_{(x_1,n_1,n_1')}^{1/2}
n_2^*p_{x_1}n_2'&=\lambda_{(x_2,n_1n_2,n_1'n_2')}^{-1/2}
\lambda_{(x_1,n_1,n_1')}^{1/2}
p_{x_2}n_2^*n_2'p_{x_2}\\
&=\lambda_{(x_2,n_1n_2,n_1'n_2')}^{-1/2}
\lambda_{(x_1,n_1,n_1')}^{1/2}\lambda_{(x_2,n_2,n_2')}^{1/2}
U_{(x_2,n_2,n_2')}.
\end{align*}
But $(n_2,x_2)\sim (n_2',x_2')$ implies that $U_{(x_2,n_2,n_2')}$ is 
homotopic to $p_{x_2}$, and hence $U_{(x_2,n_1n_2,n_1'n_2')}$ is homotopic to 
$p_{x_2}$. This says that $[U_{(x_2,n_1n_2,n_1'n_2')}]_1=0$, as desired.

This complete the proof that composition is well-defined. To see that 
inversion is well-defined, suppose $[(n_1,x_1)]=[(n_1',x_1')]$. We need to 
show 
that $[(n_1^*,\alpha_{n_1}(x_1))]=[((n_1')^*,\alpha_{n_1'}(x_1')]$. We again 
have two cases.

{\em Case 1:} Suppose that $x_1=x_1'\notin\partial E_\is$. We know that there 
is open $V$ such that $x_1\in V\subseteq \dom(n_1)\cap\dom(n_1')$ and 
$\alpha_{n_1}|_V=\alpha_{n_1'}|_V$. A straightforward argument shows that the 
open set $V':=\alpha_{n_1}(V)$ satisfies $\alpha_{n_1}(x_1)\in V'\subseteq 
\dom(n_1^*)\cap \dom((n_1')^*)$ and
$\alpha_{n_1^*}|_{V'}=\alpha_{(n_1')^*}|_{V'}$. So 
$[(n_1^*,\alpha_{n_1}(x_1))]=[((n_1')^*,\alpha_{n_1'}(x_1')]$ in this case.

{\em Case 2:} Suppose that $x_1=x_1'\in\partial E_\is$. We have to show that 
$[U_{(\alpha_{n_1}(x_1),n_1^*,(n_1')^*)}]_1=0$. We use Lemma~\ref{lem: 
properties of alpha n}(c) to get
\begin{align*}
U_{(\alpha_{n_1}(x_1),n_1^*,(n_1')^*)} 
&=\lambda_{(\alpha_{n_1}(x_1),n_1^*,(n_1')^*)}^{-1/2}
p_{\alpha_{n_1}(x_1)}n_1(n_1')^*p_{\alpha_{n_1}(x_1)}\\
&= \lambda_{(\alpha_{n_1}(x_1),n_1^*,(n_1')^*)}^{-1/2}n_1p_{x_1}(n_1')^*.
\end{align*}
Since $(n_1,x_1)\sim(n_1',x_1')$, we have $U_{(x_1,n_1,n_1')}$ homotopic to 
$p_{x_1}$. Hence $U_{(\alpha_{n_1}(x_1),n_1^*,(n_1')^*)}$ is homotopic to 
\[
\lambda_{(\alpha_{n_1}(x_1),n_1^*,(n_1')^*)}^{-1/2}n_1U_{(x_1,n_1,n_1')}(n_1')^*
=\lambda_{(\alpha_{n_1}(x_1),n_1^*,(n_1')^*)}^{-1/2}\lambda_{(x_1,n_1,n_1')}^{-1/2}
n_1p_{x_1}n_1^*n_1'p_{x_1}(n_1')^*.
\]
Now, using (\ref{eq: hEs and alphas}) we have
\[
\lambda_{(\alpha_{n_1}(x_1),n_1^*,(n_1')^*)}^{-1/2}\lambda_{(x_1,n_1,n_1')}^{-1/2}
= h_E(x_1)(n_1^*n_1)^{-1}h_E(x_1)((n_1')^*n_1')^{-1}.
\]
So $U_{(\alpha_{n_1}(x_1),n_1^*,(n_1')^*)}$ is homotopic to 
\begin{align*}
h_E(x_1)(n_1^*n_1)^{-1}&h_E(x_1)((n_1')^*n_1')^{-1}n_1p_{x_1}n_1^*n_1'p_{x_1}(n_1')^*
\\
&\qquad=\big(h_E(x_1)(n_1^*n_1)^{-1}n_1p_{x_1}n_1^*\big)\big(h_E(x_1)((n_1')^*n_1')^{-1}
n_1'p_{x_1}(n_1')^*\big)\\
&\qquad=p_{\alpha_{n_1}(x_1)},
\end{align*}
where the last equality follows from Lemma~\ref{lem: properties of alpha 
n}(a). Hence $[U_{(\alpha_{n_1}(x_1),n_1^*,(n_1')^*)}]_1=0$.
\end{proof}

We equip $\grp_{(C^*(E),\mathcal{D}(E))}$ with the topology generated by 
$\{\{[(n,x)]:x\in\dom(n)\}:n\in N(\mathcal{D}(E))\}$. It can be proven directly that $\grp_{(C^*(E),\mathcal{D}(E))}$ is a topological groupoid with this topology, however, it also follows from our next result. 

\begin{prop}\label{prop:weyl}
Let $E$ be a graph. Then $\grp_{(C^*(E),\mathcal{D}(E))}$ is a topological groupoid, and $\grp_{(C^*(E),\mathcal{D}(E))}$ and $\grp_E$ are 
isomorphic as topological groupoids.
\end{prop}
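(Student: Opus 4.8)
The plan is to exhibit an explicit isomorphism of topological groupoids $\kappa\colon\grp_E\to\grp_{(C^*(E),\DD(E))}$; since $\grp_E$ is a topological groupoid, this will also show that $\grp_{(C^*(E),\DD(E))}$ is one. For $(x,k,y)\in\grp_E$ I would choose $m,n\in\N$ with $m-n=k$ and $\sigma_E^m(x)=\sigma_E^n(y)=:z$, write $x=\mu z$ and $y=\nu z$ with $|\mu|=m$, $|\nu|=n$ (so $r(\mu)=r(\nu)$), and set $\kappa(x,k,y):=[(s_\mu s_\nu^*,y)]$; Lemma~\ref{lem: alpha sub spanning elt } guarantees $y\in\dom(s_\mu s_\nu^*)$ and $\alpha_{s_\mu s_\nu^*}(y)=x$, so this is a legitimate element once independence of the representation is checked. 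Two representations of $(x,k,y)$ differ only by replacing $(\mu,\nu,z)$ with $(\mu\rho,\nu\rho,z')$ where $z=\rho z'$ — this is forced by the condition $|\mu|-|\nu|=k$ — and then $(s_\mu s_\nu^*,y)\sim(s_{\mu\rho}s_{\nu\rho}^*,y)$: for $y\notin\partial E_\is$ take $V=Z(\nu\rho)$ in clause~(b) of Proposition~\ref{prop: equiv relation}, and for $y\in\partial E_\is$ the identity $s_\mu^*s_{\mu\rho}=s_\rho$ gives $U_{(y,s_\mu s_\nu^*,s_{\mu\rho}s_{\nu\rho}^*)}=p_y$, so clause~(a) applies.

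Next I would check that $\kappa$ is a groupoid homomorphism: after refining the representations of two composable arrows so that the $\nu$-path of the first is the $\mu$-path of the second, the identities $s_{\mu_1}s_{\nu_1}^*s_{\mu_2}s_{\nu_2}^*=s_{\mu_1}s_{\nu_2}^*$ and $(s_\mu s_\nu^*)^*=s_\nu s_\mu^*$ match the operations of Proposition~\ref{prop: the groupoid}. Injectivity I would get from a triviality-of-kernel argument: the class $[(n,y)]$ remembers both $y$ and $\alpha_n(y)$, so $\kappa$ determines source and range; hence $\kappa(\gamma_1)=\kappa(\gamma_2)$ makes $\gamma_1\gamma_2^{-1}$ defined with $\kappa(\gamma_1\gamma_2^{-1})$ a unit, and it suffices to show that $\kappa(x,k,x)$ is a unit only when $k=0$. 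If $k\ne0$ then $x=\mu_0\eta^\infty$ for a simple loop $\eta$ with $k=j|\eta|$, $j\ne0$ (replacing $(x,k,x)$ by its inverse we may assume $j>0$), and $\kappa(x,k,x)=[(s_{\mu_0\eta^j}s_{\mu_0}^*,x)]$; when $x\in\partial E_\is$, Lemma~\ref{lem:corner} identifies $p_xC^*(E)p_x$ with $C(\T)$ so that $U_{(x,s_{\mu_0\eta^j}s_{\mu_0}^*,p_{s(x)})}=p_x s_{\mu_0}s_{\eta^j}^*s_{\mu_0}^*p_x$ corresponds to $z^{-j}$, whence $[U]_1=-j\ne0$; when $x\notin\partial E_\is$ the homeomorphism $\alpha_{s_{\mu_0\eta^j}s_{\mu_0}^*}$ fixes only $x$ (since $\eta^j w=w$ forces $w=\eta^\infty$) and so is the identity on no neighbourhood of $x$. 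Either way $\kappa(x,k,x)$ is not a unit, so $\ker\kappa$ is trivial and $\kappa$ is injective.

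Surjectivity of $\kappa$ is the crux: it is equivalent to the statement that for every $n\in N(\DD(E))$ and every $x\in\dom(n)$ there are $\mu,\nu\in E^*$ with $\alpha_n=\alpha_{s_\mu s_\nu^*}$ on some cylinder $Z(\nu)\ni x$ contained in $\dom(n)$ and with $(n,x)\sim(s_\mu s_\nu^*,x)$. I would first localise $n$ near $x$ without changing its $\sim$-class: multiplying $n$ by $(P_0n^*nP_0)^{-1/2}$ for a small cylinder projection $P_0=1_{Z(\nu_0)}$ on which $n^*n$ is bounded below, and then cutting down on the left by a cylinder projection around $\alpha_n(x)$, reduces to the case that $n^*n$ and $nn^*$ are cylinder projections; each such step preserves the $\sim$-class by clause~(b), or at an isolated point because the relevant unitary $U$ is a scalar multiple of $p_x$ (via Lemma~\ref{lem: properties of alpha n}). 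The real work is then to show that, after shrinking the cylinder further, $\alpha_n$ becomes a shift map $\nu w\mapsto\mu w$: this should follow from $n\in\clsp\{s_\alpha s_\beta^*\}$, so that the behaviour of $\alpha_n$ near $x$ is governed by finitely many terms $s_\alpha s_\beta^*$, together with the relation $h_E(x)(n^*s_\lambda s_\lambda^* n)=h_E(\alpha_n(x))(s_\lambda s_\lambda^*)h_E(x)(n^*n)$ to identify the target cylinder, with the $\Z$-grading and the Ces\`aro estimates of Proposition~\ref{prop:cesaro} controlling the bookkeeping. Granting the shift description on $Z(\nu)$, for each $x'\in Z(\nu)$ one gets $(n,x')\sim(s_\mu s_\nu^*,x')$ by clause~(b) when $x'\notin\partial E_\is$, and when $x'\in\partial E_\is$ by clause~(a), because the convolution computation in $C^*(\grp_E)$ provided by Proposition~\ref{prop:groupoid} shows $p_{x'}n^*s_\mu s_\nu^*p_{x'}$ is a scalar multiple of $p_{x'}$, hence $[U_{(x',n,s_\mu s_\nu^*)}]_1=0$.

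Finally, this surjectivity statement also shows that the sets $D_{s_\mu s_\nu^*}=\{[(s_\mu s_\nu^*,x)]:x\in Z(\nu)\}$ form a basis for $\grp_{(C^*(E),\DD(E))}$, while $\kappa$ carries the basic open set $Z(\mu,\nu)$ of $\grp_E$ bijectively onto $D_{s_\mu s_\nu^*}$; since the $Z(\mu,\nu)$ form a basis for $\grp_E$, the bijection $\kappa$ carries a basis onto a basis and is therefore a homeomorphism, hence an isomorphism of topological groupoids, which completes the proof. I expect the main obstacle to be the local-structure statement for normalisers in the third paragraph: when every loop in $E$ has an exit it can be extracted from Renault's analysis in \cite{Ren2}, but for arbitrary $E$ it is the essential new content, and the role of clause~(a) of the equivalence relation is precisely to absorb the failure of a normaliser to be supported on a single bisection over a point with nontrivial isotropy.
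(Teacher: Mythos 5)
Your map, your well-definedness and homomorphism checks, and your injectivity argument (reduce to showing $\kappa(x,k,x)$ is a unit only for $k=0$, split on whether the eventually periodic point is isolated, and use Lemma~\ref{lem:corner} for the $K_1$ obstruction) all match the paper's proof. The genuine gap is in surjectivity at isolated eventually periodic points. You claim that the convolution computation shows $p_{x'}n^*s_\mu s_\nu^*p_{x'}$ is a scalar multiple of $p_{x'}$, hence $[U_{(x',n,s_\mu s_\nu^*)}]_1=0$. This is false precisely when $x'$ is isolated and eventually periodic: there $p_{x'}C^*(E)p_{x'}\cong C(\T)$, the open support of $\pi(n)$ can meet the isotropy fibre over $x'$ in several points $(\alpha_{n}(x'),k_i,x')$, and $p_{x'}n^*s_\mu s_\nu^*p_{x'}$ is then a genuine element of $C(\T)$ whose winding number need not vanish. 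Already for $E$ a single vertex $v$ with one loop, $C^*(E)=C(\T)$, $\DD(E)=\C$, and a unitary $n$ of winding number $w$ satisfies $[(n,x)]=\kappa((x,w,x))$, not $\kappa((x,0,x))$; your recipe with $\mu=\nu=v$ would wrongly identify $[(n,x)]$ with the unit. The correct statement, and what the paper proves, is that the preimage of $[(n,x)]$ is $(\alpha_n(x),k',x)$ for a $k'$ that differs from the naive $|\mu|-|\nu|$ by a multiple of $|\eta|$ determined by $[U_{(x,n,s_\mu s_\nu^*)}]_1$; one must replace $s_\mu s_\nu^*$ by $s_{\mu\eta^m}s_{\nu\eta^l}^*$ to kill the $K_1$ class. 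Your own closing remark --- that clause (a) exists to absorb normalisers not supported on a single bisection over a point with nontrivial isotropy --- describes exactly the phenomenon your argument fails to handle.

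Two further points. First, the ``local structure'' step (that $\alpha_n$ agrees with some $\alpha_{s_\mu s_\nu^*}$ on a neighbourhood of a non-isolated $x$) is only asserted to ``follow from'' density of $\newspan\{s_\alpha s_\beta^*\}$; the paper proves it by transporting $n$ to $f=\pi(n)\in C^*(\grp_E)$ and analysing the open support $\supp'(f)$ via Lemma~\ref{lem: support of f}, and some such argument is genuinely needed. Second, your topological conclusion requires that each subbasic set $\{[(n,y)]:y\in\dom(n)\}$ contain, around each of its points, a whole set of the form $\{[(s_\mu s_\nu^*p_W,x')]:x'\in W\}$; this is a \emph{uniform} statement --- the same $\mu,\nu$ must work for every $x'$ in a neighbourhood of $x$, including isolated eventually periodic $x'$ accumulating at $x$, where the $K_1$ class could a priori jump. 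Pointwise surjectivity (even once repaired, with $\mu,\nu$ adjusted point by point) does not give this. The paper's continuity argument closes this with the norm estimate $\|U_{(x',n,s_\mu s_\nu^*)}-\lambda p_{x'}\|<2$, obtained from the Ces\`aro approximation of Proposition~\ref{prop:cesaro}, which forces $[U_{(x',n,s_\mu s_\nu^*)}]_1=0$ simultaneously for all isolated $x'$ in a fixed small neighbourhood. Your sketch invokes the Ces\`aro estimates only for the local-structure step, so this part of the argument is missing as well.
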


\begin{remark}\label{rem: the Weyl groupoid case}
If $\grp_E$ is topological principally, which we know from Proposition 
~\ref{prop:topological principal} is equivalent to $E$ satisfying condition 
(L), then $\grp_{(C^*(E),\mathcal{D}(E))}$ is isomorphic to the Weyl groupoid 
$\grp_{C_0(\grp_E^0)}$ of $(C^*(\grp_E),C_0(\grp_E^0))$ as in \cite{Ren2}. In 
this case the 
isomorphism of $\grp_E$ and $\grp_{(C^*(E),\mathcal{D}(E))}$ proved below 
follows from \cite[Proposition 4.14]{Ren2}.
\end{remark}

To prove Proposition~\ref{prop:weyl} we need the following result. The proof can be deduced from the proof of \cite[Proposition 
4.8]{Ren2}, but we include a proof for completeness. As in \cite{Ren2}, we let $\supp'(f):=\{y\in\grp_E:f(\gamma)\ne 0\}$ for $f\in C^*(\grp_E)$.

\begin{lemma}\label{lem: support of f}
Let $E$ be a graph and $\pi:C^*(E)\to C^*(\grp_E)$ the isomorphism from Proposition~\ref{prop:groupoid}. Let $n\in N(\DD(E)$, and $f:=\pi(n)$. Then $\supp'(f)$ 
satisfies  
\begin{enumerate}
\item[(i)] $s(\supp'(f))=\dom(n)$;
\item[(ii)] $(x,k,y)\in\supp'(f)\Longrightarrow \alpha_n(y)=x$; and
\item[(iii)] $y\in\dom(n)\Longrightarrow (\alpha_n(y),k,y)\in \supp'(f)$ for some $k\in\Z$. 
\end{enumerate} 
\end{lemma}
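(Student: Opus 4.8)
The plan is to compute $\supp'(f)$ using the structure of normalisers and the identification of $C^*(E)$ with $C^*(\grp_E)$ from Proposition~\ref{prop:groupoid}. First I would recall that under $\pi$ the diagonal $\DD(E)$ is carried onto $C_0(\grp_E^0)$, and that for $d\in\DD(E)$ the function $\pi(d)$ is supported on the unit space with $\pi(d)(x,0,x)=h_E(x)(d)$ (this follows from \eqref{he_map} and the way $\pi$ is built on the generators $s_\mu s_\mu^*\mapsto 1_{Z(\mu,\mu)}$). The key observation is that for $n\in N(\DD(E))$ and $d\in\DD(E)$, the convolution identities $\pi(n)*\pi(d)*\pi(n)^* = \pi(ndn^*)$ and $\pi(n)^**\pi(d)*\pi(n) = \pi(n^*dn)$ land in $C_0(\grp_E^0)$, which forces $f:=\pi(n)$ to be supported on a bisection: concretely, if $(x,k,y)$ and $(x',k',y')$ both lie in $\supp'(f)$ with $y=y'$, then evaluating $f*f^*$ (which is in $C_0(\grp_E^0)$, hence supported on units) at $(x,k-k',x')$ shows $x=x'$ and $k=k'$; symmetrically using $f^**f$ gives injectivity of the range coordinate. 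So $\supp'(f)$ is an open bisection, and $f$ induces a partial homeomorphism of $\grp_E^0=\partial E$ which I will identify with $\alpha_n$.

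For part (i), I would argue that $n^*n\in\DD(E)$ with $\pi(n^*n)=f^**f$ supported on the unit space and $\pi(n^*n)(y,0,y)=h_E(y)(n^*n)$. Since $\supp'(f)$ is a bisection, $(f^**f)(y,0,y)=|f(\alpha(y),k(y),y)|^2$ where $(\alpha(y),k(y),y)$ is the unique element of $\supp'(f)$ with source $y$ (when one exists). Hence $y\in s(\supp'(f))$ precisely when $h_E(y)(n^*n)>0$, i.e.\ precisely when $y\in\dom(n)$; this gives (i) and, combined with the bisection property, also gives (iii) once the partial homeomorphism is identified with $\alpha_n$. For part (ii), I would verify that the partial homeomorphism $y\mapsto\alpha(y)$ induced by the bisection $\supp'(f)$ satisfies the defining property \eqref{eq: defining prop of alphas} of $\alpha_n$: using $\pi(n^*dn)=f^**\pi(d)*f$ and computing the convolution at $(y,0,y)$ for $d\in\DD(E)$ yields $h_E(y)(n^*dn) = |f(\alpha(y),k(y),y)|^2\, h_E(\alpha(y))(d) = h_E(y)(n^*n)\, h_E(\alpha(y))(d)$, which is exactly the equation characterising $\alpha_n$ uniquely; by uniqueness $\alpha=\alpha_n$ on $\dom(n)$, so every $(x,k,y)\in\supp'(f)$ has $x=\alpha(y)=\alpha_n(y)$, giving (ii).

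The main obstacle I anticipate is making the convolution computations rigorous: $C^*(\grp_E)$ is not simply $C_c(\grp_E)$, and the functions involved need only be continuous and vanishing at infinity on $\grp_E$ (via the inclusion $C^*(\grp_E)\subseteq C_0(\grp_E)$ recorded before Proposition~\ref{prop:cesaro}), so pointwise convolution formulas require care when the groupoid is not principal and fibres of $s$ can be infinite. The cleanest route is probably to first reduce to the case where $n$ is a finite linear combination of elements $s_\mu s_\nu^*$: for such $n$, $f=\pi(n)$ lies in $C_c(\grp_E)$ and all convolutions are honest finite sums, and Lemma~\ref{lem: alpha sub spanning elt } identifies $\supp'(f)$ explicitly, so (i)--(iii) can be checked directly; then one passes to general $n\in N(\DD(E))$ by a density/approximation argument, using that the three properties are closed under the relevant limits and that $\dom(n)$, $\ran(n)$, and $\alpha_n$ behave well with respect to the relation \eqref{eq: defining prop of alphas}. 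Alternatively, and perhaps more slickly, one can quote the argument of \cite[Proposition 4.8]{Ren2} almost verbatim, since nothing in that argument actually used topological principality — only the normaliser calculus and the description of $C^*(\grp_E)$ as functions on $\grp_E$, both of which are available here.
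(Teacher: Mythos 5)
Your overall strategy for (i) and (ii) --- computing $f^*gf$ on the unit space once by the convolution formula and once via \eqref{eq: defining prop of alphas}, and comparing --- is exactly the paper's argument, and your handling of (i) and the deduction of (iii) from (i) and (ii) are sound. But the proof as written rests on a false intermediate claim: that $\supp'(f)$ is an open bisection. This fails precisely in the non-condition-(L) situations this section is designed to handle. Take $E$ to be a single vertex with a single loop, so that $\grp_E\cong\Z$, $C^*(\grp_E)\cong C(\T)$ and $\DD(E)=\C$; then every unimodular $u\in C(\T)$ is a normaliser, and any such $u$ not of the form $\lambda z^k$ has at least two nonzero Fourier coefficients, so $\supp'(\pi(u))$ contains at least two points of the isotropy group $\Z$ and is not a bisection. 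Your argument for the bisection property --- evaluating $ff^*$ at $(x,k-k',x')$ and using $ff^*\in C_0(\grp_E^0)$ --- breaks down because the convolution sum at that point has (possibly infinitely) many terms and can vanish by cancellation without each term vanishing. This is why the lemma is stated as it is: (ii) pins down the range coordinate $x=\alpha_n(y)$ but deliberately says nothing about uniqueness of $k$; the failure of that uniqueness is exactly what the $K_1$-classes in the extended Weyl groupoid are introduced to repair. Consequently your verification of (ii), which invokes ``the unique element of $\supp'(f)$ with source $y$,'' is circular. The repair is short and is what the paper does: if $(x,k,y)\in\supp'(f)$ but $x\ne\alpha_n(y)$, choose a positive $g\in C_0(\grp_E^0)$ with $g(x,0,x)=1$ and $g(\alpha_n(y),0,\alpha_n(y))=0$; then $f^*gf(y,0,y)=\sum_{s(\gamma)=y}|f(\gamma)|^2g(r(\gamma))\ge|f(x,k,y)|^2>0$ because every term is nonnegative and no cancellation can occur, while \eqref{eq: defining prop of alphas} gives $f^*gf(y,0,y)=g(\alpha_n(y),0,\alpha_n(y))\,h_E(y)(n^*n)=0$, a contradiction.

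Two smaller points. The proposed reduction to finite linear combinations of the $s_\mu s_\nu^*$ is not a good escape route: such combinations need not be normalisers, and membership in $\supp'(\cdot)$ does not pass to norm limits, so the approximation step would be at least as delicate as the direct argument (the pointwise convolution formula for elements of $C^*(\grp_E)\subseteq C_0(\grp_E)$ is the standard fact to justify instead, using that $f$ restricted to each fibre $s^{-1}(y)$ is square-summable). And your closing remark that nothing in \cite[Proposition 4.8]{Ren2} uses topological principality is only half true: the portion establishing (i)--(iii) does go through (which is why the paper says its proof ``can be deduced from'' Renault's), but the bisection conclusion of that proposition genuinely requires topological principality, as the single-loop example shows.
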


\begin{proof}
Identity (i) follows because 
\[
h_E(y)(n^*n)=\pi(n^*n)(y,0,y)=f^*f(y,0,y)=\sum_{\substack{\gamma\in\GG_E \\ 
s(\gamma)=(y)}}|f(\gamma)|^2.
\]

For (ii) we first consider the function $f^*gf$ where $g$ is any element of 
$\pi(\DD(E))=C_0(\GG_E^{(0)})$. Using the convolution product we have
\begin{equation}\label{eq: key cont convolution}
f^*gf(y,0,y)=\sum_{\substack{\gamma\in\GG_E \\ 
s(\gamma)=(y)}}|f(\gamma)|^2g(r(\gamma))\quad 
\text{for all $x\in\partial E$}.
\end{equation}
Alternatively, we can also apply (\ref{eq: defining prop of alphas}) to, say, 
$g=\pi(d)$ to get
\begin{align}
f^*gf(y,0,y)=\pi(n^*dn)(y,0,y)=h_E(y)(n^*dn)&=h_E(\alpha_n(y))(d)h_E(y)(n^*n)\nonumber\\
&= g(\alpha_n(y),0,\alpha_n(y))|f(y,0,y)|^2.\label{eq: key cont hE}
\end{align}
Now suppose for contradiction that $(x,k,y)\in\supp'(f)$ but 
$\alpha_n(y)\not=x$. Choose $g\in C_0(\GG_E^{(0)})$ a positive function with 
$g(x,0,x)=1$ and 
$g(\alpha_n(y),0,\alpha_n(y))=0$. Then (\ref{eq: key cont convolution}) gives 
\[
f^*gf(y,0,y)\ge |f(x,k,y)|^2g(x,0,x)>0, 
\]
whereas (\ref{eq: key cont hE}) gives
\[
f^*gf(y,0,y)=g(\alpha_n(y),0,\alpha_n(y))|f(y,0,y)|^2=0.
\]
So (ii) holds. 

Implication (iii) follows immediately from (i) and (ii).
\end{proof}

\begin{proof}[Proof of Proposition~\ref{prop:weyl}]
Let $(x,k,y)\in\grp_E$. Then there are $\mu,\nu\in E^*$ and $z\in\partial E$ 
such that $x=\mu z$, $y=\nu z$, and $k=|\mu|-|\nu|$. We know from 
Lemma~\ref{lem: alpha sub spanning elt } that $s_\mu s_\nu^*\in 
N(\mathcal{D}(E))$, $y\in\dom(s_\mu s_\nu^*)$, and that 
$\alpha_{s_\mu s_\nu^*}(y)=x$. Define 
$\phi:\grp_E\to\grp_{(C^*(E),\mathcal{D}(E))}$ by 
\[
\phi((x,k,y))=[(s_\mu s_\nu^*,y)].
\]
It is routine to check that $\phi$ is well-defined, in the sense that if 
$\mu,\nu,\mu',\nu'\in E^*$, $z,z'\in\partial E$, $\mu z=\mu' z'$, $\nu z=\nu' 
z'$, and $|\mu|-|\nu|=|\mu'|-|\nu'|$, then $[(s_\mu s_\nu^*, \nu 
z)]=[(s_{\mu'} s_{\nu'}^*,\nu' z')]$. It is also routine to check that 
$\phi$ is a groupoid homomorphism. We now have to show that $\phi$ is a 
homeomorphism.
	
To show that $\phi$ is injective, assume that 
$\phi((x,k,y))=\phi((x',k',y'))$. Then $x=x'$ and $y=y'$. Suppose for 
contradiction that $k\ne k'$. Then $y$ must be eventually periodic, because 
otherwise we would have $\alpha_{s_\kappa s_\lambda^*}(y)\ne 
\alpha_{s_{\kappa'}s_{\lambda'}^*}(y)$ for $|\kappa|-|\lambda|=k$ and $|\kappa'|-|\lambda'|=k'$. 
Thus $x=\mu \eta\eta\eta\cdots$ and 
$y=\nu\eta\eta\eta\cdots$ for some $\mu,\nu\in E^*$ and a simple loop $\eta\in E^*$ 
such that $s(\eta)=r(\mu)=r(\nu)$. It follows that 
$\phi((x,k,y))=[(s_{\mu(\eta)^m}s_{\nu (\eta)^n}^*,y)]$ and 
$\phi((x,k',y))=[(s_{\mu(\eta)^{m'}}s_{\nu (\eta)^{n'}}^*,y)]$ where 
$m,n,m',n'$ are nonnegative integers such that $|\mu 
(\eta)^m|-|\nu(\eta)^n|=k$ and $|\mu (\eta)^{m'}|-|\nu(\eta)^{n'}|=k'$. 
Suppose that $\eta$ has an exit. Then $y\notin\partial E_\is$, and there is a 
$\zeta\in E^*$ such that $s(\zeta)=s(\eta)$, $|\zeta|\le |\eta|$, and 
$\zeta\ne \eta_1\eta_2\cdots \eta_{|\zeta|}$ (where $\eta=\eta_1\eta_2\cdots 
\eta_{|\eta|}$). Then for any open set $U$ with $y\in 
U\subseteq\dom(s_{\mu(\eta)^m}s_{\nu 
(\eta)^n}^*)\cap\dom(s_{\mu(\eta)^{m'}}s_{\nu (\eta)^{n'}}^*)$, there is a positive 
integer $l$ such that $\emptyset\ne Z(\nu(\eta)^l\zeta)\subseteq U$, and that 
$\alpha_{s_{\mu(\eta)^m}s_{\nu (\eta)^n}^*}(z)\ne 
\alpha_{s_{\mu(\eta)^{m'}}s_{\nu (\eta)^{n'}}^*}(z)$ for any $z\in 
Z(\mu(\eta)^l\zeta)$. This contradicts the assumption that 
$\phi((x,k,y))=\phi((x,k',y))$. If $\eta$ does not have an exit, then 
$y\in\partial E_\is$. Without loss of generality assume $k > k'$, then we can use Lemma \ref{lem: properties of alpha n}(c) to compute
\[
[p_y(s_{\mu(\eta)^m}s_{\nu (\eta)^n}^*)^*s_{\mu(\eta)^{m'}}s_{\nu 
(\eta)^{n'}}^*p_y]_1=[p_ys_{\nu}s_{\eta^{k-k'}}s_{\nu}^*p_y]_1=[(p_ys_{\nu}s_{\eta}s_{\nu}^*p_y)^{k-k'}]_1,
\]
and the second assertion in Lemma \ref{lem:corner} implies that 
$[(p_ys_{\nu}s_{\eta}s_{\nu}^*p_y)^{k-k'}]_1\neq 0$. 
Thus, 
\[
[U_{(y , s_{\mu(\eta)^m}s_{\nu (\eta)^n}^* , s_{\mu(\eta)^{m'}}s_{\nu 
(\eta)^{n'}}^*)}]_1 \neq 0
\]
and hence $(s_{\mu(\eta)^m}s_{\nu (\eta)^n}^*,y) \not\sim 
(s_{\mu(\eta)^{m'}}s_{\nu (\eta)^{n'}}^*,y)$. But this means 
$\phi((x,k,y))\not=\phi((x,k',y))$, which is a contradiction. So we must have 
$k=k'$, and hence $\phi$ is injective.
	
To show that $\phi$ is surjective, let $[(n,x)]$ be an arbitrary 
element of $\grp_{(C^*(E),\mathcal{D}(E))}$. Let $f:=\pi(n)$, where 
$\pi:C^*(E)\to C^*(\grp_E)$ is the isomorphism from 
Proposition~\ref{prop:groupoid}. We know from (iii) of Lemma~\ref{lem: 
support of f} that $(\alpha_n(x),k,x)\in\supp'(f)$ for some $k\in\Z$. Suppose 
first that 
$x\notin\partial E_\is$.  Choose $\mu,\nu\in E^*$, a clopen neighborhood $U$ of 
$\alpha_n(x)$, and a clopen neighborhood $V$ of $x$ such that $U\subseteq 
Z(\mu)$, $V\subseteq Z(\nu)$, $\sigma_E^{|\mu|}(U)=\sigma_E^{|\nu|}(V)$, $k=|\mu|-|\nu|$, and 
$Z(U,|\mu|,|\nu|,V)\subseteq\supp'(f)$. Then $\alpha_{s_\mu s_\nu^*}(y)=\alpha_n(y)$ 
for all $y\in V$, and hence $\phi(\alpha_n(x),|\mu|-|\nu|,x)=[(s_\mu 
s_\nu^*,x)]=[(n,x)]$. 

Now suppose that $x\in\partial E_\is$ is not eventually periodic. Choose $\mu,\nu\in E^*$ and $z\in\partial E$ such that $x=\nu z$, 
$\alpha_n(x)=\mu z$, and $k=|\mu|-|\nu|$. It follows from Lemma 
\ref{lem:corner} that $[U_{(x,n,s_{\mu}s_\nu^*)}]_1=0$ (because $K_1(\C)=0$), and thus that 
$\phi((\alpha_n(x),k,x))=[(s_\mu s_\nu^*,x)]=[(n,x)]$. Assume then that $x$ is 
eventually periodic. Then there are $\mu,\nu\in E^*$ and a simple loop 
$\eta\in E^*$ such that $s(\eta)=r(\mu)=r(\nu)$, $x=\nu\eta\eta\eta\cdots$, 
$\alpha_n(x)=\mu\eta\eta\eta\cdots$, and $k=|\mu|-|\nu|$. Choose positive 
integers $l$ and $m$ such that $[U_{(x,n,s_{\mu}s_\nu^*)}]_1=l-m$. Then by 
Lemma~\ref{lem:corner} we have
\[
[p_xs_\mu 
s_{\eta^l}s_{\eta^m}^*s_\nu^*p_x]_1=[(p_xs_\mu s_\eta s_\nu^*p_x)^l(p_xs_\mu 
s_\eta^*s_\nu^*p_x)^m]_1=l-m,
\]
and hence $[U_{(x,n,s_{\mu}s_\nu^*)}]_1=[p_xs_\nu 
s_{\eta^l}s_{\eta^m}^*s_\nu^*p_x]_1$. Since 
\[
h_E(s_{\nu\eta^m}s_{\nu\eta^m}^*)^{-1/2}U_{(x,n,s_\mu s_\nu^*)}(p_xs_\nu 
s_{\eta^l}s_{\eta^m}^*s_\nu^*p_x)^*=U_{(x,n,s_{\mu\eta^m}s_{\nu\eta^l}^*)},
\]
We have $[U_{(x,n,s_{\mu\eta^m}s_{\nu\eta^l}^*)}]_1=[U_{(x,n,s_\mu 
s_\nu^*)}]_1-[p_xs_\nu 
s_{\eta^l}s_{\eta^m}^*s_\nu^*p_x]_1=0.$ Hence 
\[
\phi((\alpha_n(x),|\mu 
(\eta)^m|-|\nu 
(\eta)^l|,x))=[(s_{\mu(\eta)^m}s_{\nu(\eta)^l}^*,x)]=[(n,x)],
\]
which shows 
that 
$\phi$ is surjective.

To see that $\phi$ is open, let $\mu,\nu\in E^*$ and let $U$ and $V$ 
be clopen subsets of $\partial E$ such that $U\subseteq Z(\mu)$, $V\subseteq 
Z(\nu)$, and $\sigma_E^{|\mu|}(U)=\sigma_E^{|\nu|}(V)$. Then there is a 
$p_V\in\mathcal{D}(E)$ such that $h_E(x)(p_V)=1$ if $x\in V$, and 
$h_E(x)(p_V)=0$ if $x\in\partial E\setminus V$; and then 
$\phi(Z(U,|\mu|,|\nu|,V))=\{[s_\mu s_\nu^* p_V,x]:x\in\dom(s_\mu s_\nu^* 
p_V)\}$. This shows that $\phi$ is open.
	
To prove that $\phi$ is continuous we will show that 
$\phi^{-1}(\{[(n,y)]:y\in \dom(n)\})$ is open for each $n\in N(\DD(E))$. Fix 
$n\in 
N(\mathcal{D}(E))$ 
and 
$z\in\dom(n)$. We claim that there is an open subset $V_{(n,z)}$ in $\GG_E$ 
such that 
\[
\phi^{-1}([(n,z)])\in V_{(n,z)}\subseteq \phi^{-1}(\{[(n,y)]:y\in 
\dom(n)\}).
\]
Let $\pi:C^*(E)\to C^*(\GG_E)$ be the isomorphism from 
Proposition~\ref{prop:groupoid}, and $f:=\pi(n)$. We know from (iii) of 
Lemma~\ref{lem: support of f} that $(\alpha_n(z),k,z)\in\supp'(f)$ for some 
$k\in\Z$. Suppose that there are two 
different 
integers $k_1$ and $k_2$ such that both $(\alpha_n(z),k_1,z)$ and 
$(\alpha_n(z),k_2,z)$ belong to $\supp'(f)$. Then there are 
$\mu_1,\nu_1,\mu_2,\nu_2\in E^*$ such that $(\alpha_n(z),k_1,z)\in 
Z(\mu_1,\nu_1)$, $(\alpha_n(z),k_2,z)\in Z(\mu_2,\nu_2)$ and 
$Z(\mu_1,\nu_1),Z(\mu_2,\nu_2)\subseteq \supp'(f)$. Without loss of generality 
we can 
assume that $|\mu_1|=|\mu_2|$, and then we have $\mu_1=\mu_2$. We also have 
$\nu_1=\nu_2\xi$ or $\nu_2=\nu_1\xi$ for some $\xi\in E^*\setminus E^0$; we 
assume that $\mu_2=\mu_1\xi$ and denote $\mu:=\mu_1=\mu_2$. We claim that $z$ 
is an isolated point, and that 
\[
s(Z(\mu,\nu_1))\cap 
s(Z(\mu,\nu_1\xi))=\{(z)\}.
\]   
To see this, suppose $(x)\in s(Z(\mu,\nu_1))\cap 
s(Z(\mu,\nu_1\xi))$. Then $x=\nu_1y$ for some $y$ such that $\alpha_n(y)=\mu 
y$, and $x=\nu_1\xi y'$ for some $y'$ such that $\alpha_n(y)=\mu y'$. It 
follows that $y'=y=\xi y'$, and hence $y=\nu_1\xi\xi\xi\dots$. So  
\[
s(Z(\mu,\nu_1))\cap 
s(Z(\mu,\nu_1\xi))=\{(\nu_1\xi\dots)\}=\{(z)\},
\]
and hence $z$ is an isolated point. Now 
$\phi^{-1}([(n,z)])=(\mu\xi\dots,|\mu|-|\nu_1|,\nu_1\xi\dots)$ is isolated 
because 
$\{\phi^{-1}([(n,z)])\}=Z(\{\mu\xi\dots\},|\mu|,|\nu_1|,\{\nu_1\xi\dots\})$ is 
open. So in this case we take $V_{(n,z)}=\{\phi^{-1}([(n,z)])\}$.

Now assume that there is a unique $k$ such that 
$(\alpha_n(z),k,z)\in\supp'(f)$. Choose $\mu,\nu\in E^*$ with $r(\mu)=r(\nu)$ 
and an open subset $V\subseteq r(\mu)\partial E$ such that 
$(\alpha_n(z),k,z)\in Z(\mu 
V,|\mu|,|\nu|,\nu V)\subseteq\supp'(f)$. Lemma~\ref{lem: support of f} implies 
that 
$\alpha_n(x)=\alpha_{s_\mu s_\nu^*}(x)$ for all $x\in \nu V$. We aim to find 
an open subset $W\subseteq \nu V$ such that $z\in W$ 
and 
\begin{equation}\label{eq: requirements of W}
[U_{(x,n,s_\mu s_\nu^*)}]_1=0\text{ for all }x\in W\cap\partial 
E_\is;
\end{equation} 
for then we have $\phi((\alpha_n(x),|\mu|-|\nu|,x))=[(s_\mu 
s_\nu^*,x)]=[(n,x)]$ for all $x\in W$, and 
the open subset 
$V_{(n,z)}:=Z(\alpha_n(W),|\mu|,|\nu|,W)$ 
satisfies the 
desired $\phi^{-1}([(n,z)])\in V_{(n,z)}\subseteq \phi^{-1}(\{[(n,y)]:y\in 
\dom(n)\})$.

Let $\delta:=|f(\alpha_n(z),k,z)|$. Then 
\[
h_E(z)(n^*n)=f^*f(z,0,z)=\sum_{\substack{\gamma\in\GG_E \\ s(\gamma)=(z)}}
|f(\gamma)|^2=\delta^2.
\]
Choose an open subset $V_0\subseteq \nu V$ such that $z\in V_0$ and 
$h_E(x)(n^*n)>(\delta/2)^2$ for all $x\in V_0$. Define
\[
g:=f 
^*1_{Z(\mu,\nu)}-\lambda(f^*f)^{1/2},
\]
where $\lambda=\overline{f(\alpha_n(z),k,z)}/|f(\alpha_n(z),k,z)|\in\T$. We 
claim that $g(z,j,z)=0$ for all $j\in\Z$. When $j=0$ we 
have 
\[
g(z,0,z)=\sum_{\gamma_1\gamma_2=(z,0,z)}f^*(\gamma_1)1_{Z(\mu,\nu)}(\gamma_2) 
- 
\lambda\sum_{\eta_1\eta_2=(z,0,z)}(f^*(\eta_1)f(\eta_2))^{1/2}
.\]
Implication (ii) of Lemma \ref{lem: support of f} ensures that the only terms in the sums which produce nonzero 
entries are $\gamma_1,\eta_1=(z,-k,\alpha_n(z))$ and 
$\gamma_2,\eta_2=(\alpha_n(z),k,z)$. Hence
\[
g(z,0,z)=\overline{f(\alpha_n(z),k,z)}-\lambda|f(\alpha_n(z),k,z)|=0.
\]
When $j\not=0$, both terms in the expression for $g$ contain 
$f(\alpha_n(z),k-j,z)$, which is zero. Hence $g(z,j,z)=0$. 

Use Proposition~\ref{prop:cesaro} to choose $m\in\N$ such that 
$\|g-\Sigma_m(g)\|<\delta/2$. Since $g(z,j,z)=0$ for all $j\in\Z$, there is an 
open set $W$ such that $z\in W\subseteq V_0$ and 
\begin{equation}\label{eq: g funct identity}
\Big|\Big(1-\frac{|j|}{m+1}\Big)g(x,j,x)\Big|<\frac{\delta}{2(m+1)}
\end{equation}
for all $-m\le j\le m$ and $x\in W$. Then $|\Sigma_m(g)(x,j,x)|<\delta/2$ for 
all $(x,j,x)\in\GG_E$ with $x\in W$. It follows from the definition of the 
norm on $C^*(\GG_E)$ that for all $x\in W\cap\partial E_\is$ we have
\begin{align*}
\|\pi(p_x)\Sigma_m(g)\pi(p_x)\| &\le \Big|\sum_{\gamma\in\GG_E}(\pi(p_x)
\Sigma_m(g)\pi(p_x))(\gamma)\Big|\\
&= \Big|\sum_{\gamma\in\GG_E}\sum_{\gamma_1\gamma_2\gamma_3=\gamma}
1_{\{(x,0,x)\}}(\gamma_1)\Sigma_m(g)(\gamma_2)1_{\{(x,0,x)\}}(\gamma_3)\Big|\\
&=\Big|\sum_{j\in\Z}\Sigma_m(g)(x,j,x)\Big| <\frac{\delta}{2}.
\end{align*}
Hence
\begin{equation}\label{eq: norm estimate for cty}
\|\pi(p_x)g\pi(p_x)\|\le\|g-\Sigma_m(g)\|+\|\pi(p_x)\Sigma_m(g)\pi(p_x)\|<\delta.
\end{equation}
We now claim that $\|U_{(x,n,s_\mu s_\nu^*)}-\lambda p_x\|<2$ for all $x\in 
W\cap\partial E_\is$. To see this, first note that 
\[
\pi(p_x)=(f^*f)^{-1/2}(x,0,x) (f^*f)^{1/2}(x,0,x) \pi(p_x)=(f^*f)(x,0,x)^{-1/2}(f^*f)^{1/2} 
\pi(p_x).
\]
Thus
\begin{align*}
\pi(U_{(x,n,s_\mu s_\nu^*)}-\lambda p_x) &= 
\pi\big(h_E(x)(n^*n)^{-1/2}p_xn^*s_\mu s_\nu^* p_x-\lambda p_x\big)\\
&= (f^*f)(x,0,x)^{-1/2}\pi(p_x)f^*1_{Z(\mu,\nu)}\pi(p_x)-\lambda\pi(p_x)\\
&= 
(f^*f)(x,0,x)^{-1/2}\big(\pi(p_x)f^*1_{Z(\mu,\nu)}\pi(p_x)-\lambda(f^*f)^{1/2} 
\pi(p_x)\big)\\
&=(f^*f)(x,0,x)^{-1/2}\pi(p_x)g\pi(p_x).
\end{align*}
Using (\ref{eq: norm estimate for cty}) we now get
\begin{align*}
\|U_{(x,n,s_\mu s_\nu^*)}-\lambda p_x\|=\|\pi(U_{(x,n,s_\mu s_\nu^*)}-\lambda 
p_x)\|&=(f^*f)(x,0,x)^{-1/2}\|\pi(p_x)g\pi(p_x)\|\\
&<(f^*f)(x,0,x)^{-1/2}\delta.
\end{align*}
Recall that $x\in W\cap \partial E_\is\subseteq V_0$, and hence 
$(f^*f)(x,0,x)^{-1/2}=h_E(x)(n^*n)^{-1/2}<2/\delta$. So 
\[
\|U_{(x,n,s_\mu s_\nu^*)}-\lambda p_x\|<2.
\]
But this means $[U_{(x,n,s_\mu s_\nu^*)}]_1=0$, and so $W$ satisfies the 
desired (\ref{eq: requirements of W}). As mentioned, this means 
$V_{(n,z)}:=Z(\alpha_n(W),|\mu|,|\nu|,W)$ 
satisfies 
\[
\phi^{-1}([(n,z)])\in V_{(n,z)}\subseteq \phi^{-1}(\{[(n,y)]:y\in 
\dom(n)\}),
\]
as required. 
\end{proof}

\begin{prop}\label{prop:groupoid-iso}
	Let $E$ and $F$ be two graphs. If there is an isomorphism from $C^*(E)$ to 
	$C^*(F)$ which maps $\mathcal{D}(E)$ to $\mathcal{D}(F)$, then 
	$\grp_{(C^*(E),\mathcal{D}(E))}$ and $\grp_{(C^*(F),\mathcal{D}(F))}$ are isomorphic as topological groupoids, and 
	consequently $\grp_E$ and $\grp_F$ are isomorphic as topological groupoids.
\end{prop}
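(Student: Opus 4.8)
The plan is to exploit the fact that the entire construction of $\grp_{(C^*(E),\mathcal{D}(E))}$ in Section~\ref{sec: ext Weyl} --- the normaliser $N(\mathcal{D}(E))$, the spectrum of $\mathcal{D}(E)$ together with the identification $h_E$, the partial homeomorphisms $\alpha_n$, the isolated points $\partial E_\is$, the projections $p_x$, the corners $p_xC^*(E)p_x$, the unitaries $U_{(x,n_1,n_2)}$ and their $K_1$-classes, and the equivalence relation $\sim$ --- is built purely from the pair $(C^*(E),\mathcal{D}(E))$. So a $*$-isomorphism $\Psi\colon C^*(E)\to C^*(F)$ with $\Psi(\mathcal{D}(E))=\mathcal{D}(F)$ should transport each ingredient to the corresponding one for $(C^*(F),\mathcal{D}(F))$, yielding an isomorphism of extended Weyl groupoids; the isomorphism $\grp_E\cong\grp_F$ then follows by composing with the identifications from Proposition~\ref{prop:weyl}.

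Concretely, I would first note that $\Psi|_{\mathcal{D}(E)}$ is an isomorphism onto $\mathcal{D}(F)$, hence dualises to a homeomorphism of spectra which, via $h_E$ and $h_F$, becomes a homeomorphism $\psi\colon\partial E\to\partial F$ satisfying $h_E(x)(d)=h_F(\psi(x))(\Psi(d))$ for all $x\in\partial E$ and all $d\in\mathcal{D}(E)$. Since membership in $N(\mathcal{D}(E))$ is phrased entirely in terms of $\mathcal{D}(E)$, one gets $\Psi(N(\mathcal{D}(E)))=N(\mathcal{D}(F))$; applying $\Psi$ to \eqref{eq: defining prop of alphas} then gives $\dom(\Psi(n))=\psi(\dom(n))$, $\ran(\Psi(n))=\psi(\ran(n))$, and $\alpha_{\Psi(n)}\circ\psi=\psi\circ\alpha_n$ on $\dom(n)$. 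Homeomorphisms preserve isolated points, so $\psi(\partial E_\is)=\partial F_\is$, and the defining property of $p_x$ forces $\Psi(p_x)=p_{\psi(x)}$, whence $\Psi$ restricts to an isomorphism $p_xC^*(E)p_x\to p_{\psi(x)}C^*(F)p_{\psi(x)}$ and so induces an isomorphism on $K_1$. A short computation with the formula in Notation~\ref{notation: lambda} shows $\Psi(U_{(x,n_1,n_2)})=U_{(\psi(x),\Psi(n_1),\Psi(n_2))}$ (the scalar $\lambda_{(x,n_1,n_2)}$ is preserved because it is a value of $h_E$ on an element of $\mathcal{D}(E)$), so $[U_{(x,n_1,n_2)}]_1=0$ if and only if $[U_{(\psi(x),\Psi(n_1),\Psi(n_2))}]_1=0$.

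With all of this in hand, the assignment $(n,x)\mapsto(\Psi(n),\psi(x))$ respects the equivalence relation of Proposition~\ref{prop: equiv relation} in both directions --- case~(a) by the $K_1$ statement just described, case~(b) by $\alpha_{\Psi(n)}\circ\psi=\psi\circ\alpha_n$ --- and hence descends to a bijection $\Phi\colon\grp_{(C^*(E),\mathcal{D}(E))}\to\grp_{(C^*(F),\mathcal{D}(F))}$ with inverse induced by $\Psi^{-1}$ and $\psi^{-1}$. I would then check that $\Phi$ is a groupoid homomorphism, using $\Psi(n_1n_2)=\Psi(n_1)\Psi(n_2)$, $\Psi(n^*)=\Psi(n)^*$, and the fact that the composability condition $\alpha_{n_2}(x_2)=x_1$ translates under $\psi$ to $\alpha_{\Psi(n_2)}(\psi(x_2))=\psi(x_1)$; and that $\Phi$ is a homeomorphism, since it carries the basic open set $\{[(n,x)]:x\in\dom(n)\}$ onto $\{[(\Psi(n),y)]:y\in\dom(\Psi(n))\}$, with the analogous statement for $\Phi^{-1}$. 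Finally, Proposition~\ref{prop:weyl} identifies $\grp_{(C^*(E),\mathcal{D}(E))}$ with $\grp_E$ and $\grp_{(C^*(F),\mathcal{D}(F))}$ with $\grp_F$, giving $\grp_E\cong\grp_F$ as topological groupoids.

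I do not expect a deep obstacle here: granted Proposition~\ref{prop:weyl}, this proposition is essentially bookkeeping, verifying that the extended Weyl groupoid is a functor of the pair $(C^*(E),\mathcal{D}(E))$. The one step that needs a little care is the $K_1$ step --- confirming that $\Psi$ sends $U_{(x,n_1,n_2)}$ to $U_{(\psi(x),\Psi(n_1),\Psi(n_2))}$ exactly, normalising scalar included, so that vanishing of the $K_1$-class is transported correctly --- but this is immediate from the explicit formula together with $\Psi(p_x)=p_{\psi(x)}$ and the compatibility of $h_E$ and $h_F$ with $\psi$.
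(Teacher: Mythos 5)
Your proposal is correct and follows essentially the same route as the paper: the paper's proof produces the dual homeomorphism $\kappa\colon\partial E\to\partial F$ from $\phi|_{\mathcal{D}(E)}$, declares it "routine to check" that $[(n,x)]\mapsto[(\phi(n),\kappa(x))]$ is an isomorphism of topological groupoids, and then invokes Proposition~\ref{prop:weyl}. Your write-up simply supplies the routine verifications (normalisers, the intertwining $\alpha_{\Psi(n)}\circ\psi=\psi\circ\alpha_n$, $\Psi(p_x)=p_{\psi(x)}$, and the preservation of the unitaries $U_{(x,n_1,n_2)}$ and their $K_1$-classes) that the paper leaves implicit.
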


\begin{proof}
	Suppose $\phi$ is an isomorphism from $C^*(E)$ to $C^*(F)$ which maps 
	$\mathcal{D}(E)$ to $\mathcal{D}(F)$. Then there is a homeomorphism 
	$\kappa:\partial E\to\partial F$ such that 
	$h_E(x)(f)=h_F(\kappa(x))\phi(f)$ for all $f\in\mathcal{D}(E)$ and all 
	$x\in\partial E$. It is routine to check that the map 
	$[(n,x)]\mapsto [(\phi(n),\kappa(x))]$ is an isomorphism between the 
	topological groupoids $\grp_{(C^*(E),\mathcal{D}(E))}$ and 
	$\grp_{(C^*(F),\mathcal{D}(F))}$. Then Proposition 
	\ref{prop:weyl} implies that $\grp_E$ and $\grp_F$ are isomorphic as 
	topological groupoids.
\end{proof}

\section{Main result and examples}\label{sec: Statement of the main result}

\begin{thm}\label{thm: the main thm}
Let $E$ and $F$ be graphs. Consider the following four statements. 
\begin{itemize}
\item[(1)] There is an isomorphism from $C^*(E)$ to $C^*(F)$ which maps $\mathcal{D}(E)$ onto $\mathcal{D}(F)$.
\item[(2)] The graph groupoids $\grp_E$ and $\grp_F$ are isomorphic as topological groupoids.
\item[(3)] The pseudogroups of $E$ and $F$ are isomorphic.
\item[(4)] $E$ and $F$ are orbit equivalent.
\end{itemize}
Then $(1)\iff (2)$, $(3)\iff (4)$ and $(2)\implies (3)$. If $E$ and $F$ 
satisfy condition (L), then $(3)\implies (2)$ and the four statements are 
equivalent.
\end{thm}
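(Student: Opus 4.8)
The plan is to obtain the stated implications by combining results already established and then to treat the one genuinely new point, namely $(3)\Rightarrow(2)$ under condition~(L), by reconstructing a graph groupoid from its pseudogroup as a groupoid of germs. The equivalence $(3)\iff(4)$ is precisely Proposition~\ref{prop:pseudogroups and orbit equivalence}, and $(1)\Rightarrow(2)$ is Proposition~\ref{prop:groupoid-iso}. For $(2)\Rightarrow(1)$ I would invoke Proposition~\ref{prop:groupoid}: a topological groupoid isomorphism $\grp_E\to\grp_F$ induces a $*$-isomorphism of the groupoid $C^*$-algebras $C^*(\grp_E)\to C^*(\grp_F)$ carrying $C_0(\grp_E^0)$ onto $C_0(\grp_F^0)$, and composing this with the isomorphisms $C^*(E)\cong C^*(\grp_E)$ and $C^*(F)\cong C^*(\grp_F)$ of Proposition~\ref{prop:groupoid}, which send $\mathcal D(E)$ onto $C_0(\grp_E^0)$ and $\mathcal D(F)$ onto $C_0(\grp_F^0)$, yields an isomorphism $C^*(E)\to C^*(F)$ taking $\mathcal D(E)$ onto $\mathcal D(F)$.

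For $(2)\Rightarrow(3)$, which holds for arbitrary graphs, I would argue straight from the definition of the pseudogroup: a topological groupoid isomorphism $\psi\colon\grp_E\to\grp_F$ restricts to a homeomorphism $h\colon\partial E\to\partial F$ of the unit spaces, carries open bisections of $\grp_E$ bijectively onto open bisections of $\grp_F$, and satisfies $\alpha_{\psi(A)}=h\circ\alpha_A\circ h^{-1}$ for every open bisection $A$ of $\grp_E$; hence $h\circ\mathcal P_E\circ h^{-1}=\mathcal P_F$ and the pseudogroups are isomorphic.

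The substantive step is $(3)\Rightarrow(2)$ under condition~(L). By Proposition~\ref{prop:topological principal}, condition~(L) forces $\grp_E$ and $\grp_F$ to be topologically principal, and I would then show that a topologically principal graph groupoid $\grp_E$ is canonically isomorphic, as a topological groupoid, to the groupoid of germs of its pseudogroup $\mathcal P_E$ on $\partial E$ via the map sending $\gamma\in\grp_E$ to the germ at $s(\gamma)$ of $\alpha_A$, where $A$ is any open bisection containing $\gamma$. This is well defined because, given two such bisections $A$ and $B$, the open bisection $A^{-1}B$ contains the unit $s(\gamma)$, so (using that $\grp_E^0$ is clopen) $\alpha_A^{-1}\circ\alpha_B$ is the identity on a neighborhood of $s(\gamma)$; it is plainly a surjective groupoid homomorphism, and using the description of the topology of $\grp_E$ in terms of the basic open bisections $Z(U,m,n,V)$ it is continuous and open; and it is injective precisely because $\grp_E$ is topologically principal: if $\gamma\ne\gamma'$ had the same image, then $\gamma^{-1}\gamma'$ would lie in an open bisection $C$ contained in the isotropy bundle, but $C$ meets a unit $z$ with trivial isotropy, so $C\cap\grp_E^0$ is a dense clopen subset of $C$, whence $C\subseteq\grp_E^0$ and $\gamma=\gamma'$. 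Once this reconstruction is available, a pseudogroup isomorphism $h$ with $h\circ\mathcal P_E\circ h^{-1}=\mathcal P_F$ transports germs to germs via $[\alpha,y]\mapsto[h\circ\alpha\circ h^{-1},h(y)]$ and so gives a topological groupoid isomorphism $\grp_E\cong\grp_F$; thus all four statements are equivalent when $E$ and $F$ satisfy condition~(L). When condition~(L) holds this conclusion also follows from Renault's results together with Proposition~\ref{prop:weyl}, as noted in Remark~\ref{rem: the Weyl groupoid case}.

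The main obstacle is the injectivity of the germ map, which is exactly where topological principality, hence condition~(L), is indispensable: without it $\grp_E$ records isotropy information that $\mathcal P_E$ forgets, so $(3)\Rightarrow(2)$ genuinely fails in general, and suitable examples in Section~\ref{sec: Statement of the main result} exhibit this. The remaining verifications---that the germ map is an open continuous groupoid homomorphism, and that the induced map between the two germ groupoids is a homeomorphism---are routine and can be carried out on basic open bisections.
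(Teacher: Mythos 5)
Your proposal is correct and follows essentially the same route as the paper: $(3)\iff(4)$, $(1)\Rightarrow(2)$, $(2)\Rightarrow(1)$ and $(2)\Rightarrow(3)$ are all obtained from the same propositions, and $(3)\Rightarrow(2)$ under condition (L) is reduced via Proposition~\ref{prop:topological principal} to identifying a topologically principal $\grp_E$ with the groupoid of germs of $\mathcal{P}_E$. The only difference is that you write out the proof of that identification (including the injectivity argument via density of units with trivial isotropy), whereas the paper simply cites Renault's Proposition 3.6(i); your unpacked argument is the standard proof of that result and is sound.
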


\begin{proof}
	$(1)\implies (2)$ is proved in Proposition \ref{prop:groupoid-iso}. $(2)\implies (1)$ follows from Proposition \ref{prop:groupoid}. $(3)\iff (4)$ is proved in Proposition \ref{prop:pseudogroups and orbit equivalence}. $(2)\implies (3)$ follows directly from the definition of the pseudogroups $\mathcal{P}_E$ and $\mathcal{P}_F$.
	
	 Assume that $E$ and $F$ satisfy condition (L). Then it follows from Proposition \ref{prop:topological principal} and \cite[Proposition 3.6(i)]{Ren2} that $\grp_E$ is isomorphic to the groupoid of germs of the pseudogroup $\mathcal{P}_E$ constructed on page 8 of \cite{Ren2}, and that $\grp_F$ is isomorphic to the groupoid of germs of the pseudogroup $\mathcal{P}_F$. It follows that if $\mathcal{P}_E$ and $\mathcal{P}_F$ are isomorphic, then $\grp_E$ and $\grp_F$ are isomorphic. Thus $(3) \implies (2)$, and all 4 statements are equivalent when $E$ and $F$ satisfy condition (L).
\end{proof}

\begin{example}\label{ex: countereg to 3 implies 2}
We show that (3) does not imply (2) in general. Consider the single vertex and single loop graphs
\[
\begin{tikzpicture}
    \def\vertex(#1) at (#2,#3){
        \node[inner sep=0pt, circle, fill=black] (#1) at (#2,#3)
        [draw] {.}; 
 }
    \vertex(11) at (0, 0)
    
    \vertex(21) at (3, 0);
    
    \node at (0,-0.3) {$v$};
    \node at (3,1) {$e$};
        \node at (-0.6,0) {$E$};
         \node at (3.6,0) {$F$};
    
    \draw[style=semithick, -latex] (21.north east)
        .. controls (3.25,0.25) and (3.5,0.75) ..
        (3,0.75)
        .. controls (2.5,0.75) and (2.75,0.25) ..
        (21.north west);
        
\end{tikzpicture}
\]
We have $\partial E=\{v\}$ and $\partial F=\{ee\dots\}$. So $E$ and $F$ are 
orbit 
equivalent, but $C^*(E)\cong\C$ is not isomorphic to $C^*(F)\cong C(\T)$. 
Obviously $F$ 
does not satisfy condition (L), so $E$ and $F$ provide a simple counterexample 
to the equivalence of statements (1) and (4) of Theorem~\ref{thm: the main 
thm} without the presence of condition (L).
\end{example}

\begin{example}\label{ex: another countereg to 3 implies 2}
The graphs
\[
\begin{tikzpicture}
    \def\vertex(#1) at (#2,#3){
        \node[inner sep=0pt, circle, fill=black] (#1) at (#2,#3)
        [draw] {.}; 
 }
    \vertex(11) at (0,0)
    
    \vertex(21) at (1,0)
    
    \vertex(31) at (2,0)
    
    \vertex(41) at (3,0)
    
    \vertex(51) at (8,0)
    
    \vertex(61) at (9,0)
    
    \vertex(71) at (10,0)
    
    \vertex(81) at (11,0);
    
\node at (-0.6,0) {$E$};
\node at (6.6,0) {$F$};

\node at (3.6,0) {$\dots$};
\node at (7.4,0) {$\dots$};

\draw[style=semithick, -latex] (11.east)--(21.west);
\draw[style=semithick, -latex] (21.east)--(31.west);
\draw[style=semithick, -latex] (31.east)--(41.west);

\draw[style=semithick, -latex] (51.east)--(61.west);
\draw[style=semithick, -latex] (61.east)--(71.west);
\draw[style=semithick, -latex] (71.east)--(81.west);

\draw[style=semithick, -latex] (81.north east)
        .. controls (11.25,0.25) and (11.5,0.75) ..
        (11,0.75)
        .. controls (10.5,0.75) and (10.75,0.25) ..
        (81.north west);
        
\end{tikzpicture}
\]
provide a similar counterexample to the equivalence of statements (1) and (4) 
of Theorem~\ref{thm: the main 
thm} without the presence of condition (L). In this case $\partial 
E=\N=\partial F$ (and, unlike Example~\ref{ex: countereg to 3 implies 2}, the 
shift map is defined on all of $\partial E$ and $\partial F$), but 
$C^*(E)\cong\KK\not\cong \KK\otimes C(\T)\cong C^*(F)$.   
\end{example}

\begin{example}\label{ex: O2 and O2-}
	There exist graphs $E$ and $F$ such that $C^*(E)$ and $C^*(F)$ are isomorphic, and $\mathcal{D}(E)$ and $\mathcal{D}(F)$ isomorphic, but $E$ and $F$ are not orbit equivalent.
	
Consider for example the graphs $E_2$ and $E_{2}^{-}$ below.
\[
\begin{tikzpicture}
    \def\vertex(#1) at (#2,#3){
        \node[inner sep=0pt, circle, fill=black] (#1) at (#2,#3)
        [draw] {.}; 
 }
    \vertex(11) at (-1,0)
    
    \vertex(21) at (0,0)
    
    \vertex(31) at (4,0)
    
    \vertex(41) at (5,0);
	
	\vertex(51) at (6,0);
	
	\vertex(61) at (7,0);
    
\node at (-2.4,0) {$E_2$};
\node at (2.7,0) {$E_{2}^{-}$};

\draw[style=semithick, -latex] (11.south west)
.. controls (-1.25,-0.25) and (-1.75,-0.5) ..
 (-1.75,0) 
 .. controls (-1.75,0.5) and (-1.25,0.25) .. (11.north west);

\draw[style=semithick, -latex] (21.south east)
.. controls (0.25,-0.25) and (0.75,-0.5) ..
 (0.75,0) 
 .. controls (0.75,0.5) and (0.25,0.25) .. (21.north east);
 
 \draw[style=semithick, -latex] (11.north east)
 .. controls (-0.75,0.2) and (-0.25,0.2).. (21.north west); 
 
 \draw[style=semithick, -latex] (21.south west)
 .. controls (-0.25,-0.2) and (-0.75,-0.2).. (11.south east);
 
 \draw[style=semithick, -latex] (31.north east)
 .. controls (4.25,0.2) and (4.75,0.2).. (41.north west); 
 
 \draw[style=semithick, -latex] (41.south west)
 .. controls (4.75,-0.2) and (4.25,-0.2).. (31.south east);

 \draw[style=semithick, -latex] (41.north east)
 .. controls (5.25,0.2) and (5.75,0.2).. (51.north west); 
 
 \draw[style=semithick, -latex] (51.south west)
 .. controls (5.75,-0.2) and (5.25,-0.2).. (41.south east);
 
 \draw[style=semithick, -latex] (51.north east)
 .. controls (6.25,0.2) and (6.75,0.2).. (61.north west); 
 
 \draw[style=semithick, -latex] (61.south west)
 .. controls (6.75,-0.2) and (6.25,-0.2).. (51.south east);
 
 \draw[style=semithick, -latex] (31.south west)
 .. controls (3.75,-0.25) and (3.25,-0.5) ..
  (3.25,0) 
  .. controls (3.25,0.5) and (3.75,0.25) .. (31.north west);
  
  \draw[style=semithick, -latex] (61.south east)
  .. controls (7.25,-0.25) and (7.75,-0.5) ..
   (7.75,0) 
   .. controls (7.75,0.5) and (7.25,0.25) .. (61.north east);
   
   \draw[style=semithick, -latex] (41.north west)
   .. controls (4.75,0.25) and (4.5,0.75) ..
    (5,0.75) 
    .. controls (5.5,0.75) and (5.25,0.25) .. (41.north east);
	
    \draw[style=semithick, -latex] (51.north west)
    .. controls (5.75,0.25) and (5.5,0.75) ..
     (6,0.75) 
     .. controls (6.5,0.75) and (6.25,0.25) .. (51.north east);
\end{tikzpicture}
\]
It follows from \cite[Remark 2.8]{Raeburn2005} that the $C^*$-algebra of $E_2$ is isomorphic to $\mathcal{O}_2$ (see for example \cite{Ror}) and that the $C^*$-algebra of $E_{2}^{-}$ is isomorphic to $\mathcal{O}_{2}^{-}$ (see for example \cite{Ror}). It is proved in \cite[Lemma 6.4]{Ror} that $\mathcal{O}_2$ and $\mathcal{O}_{2}^{-}$ are isomorphic. We also have that $\mathcal{D}(E_2)$ and $\mathcal{D}(E_{2}^{-})$ because both $\partial E_2$ and $\partial E_{2}^{-}$ are Cantor sets. However, $E_2$ and $E_{2}^{-}$ cannot be orbit equivalent because if they were, then it would follow from Theorem \ref{thm: the main thm} and \cite[Theorem 3.6]{MM} that $\det(I-A_2)=\det(I-A_{2}^{-})$ where 
\begin{equation*}
A_2=\begin{pmatrix}
	1&1\\
	1&1
\end{pmatrix} \qquad \text{ and } \qquad
A_2^-=\begin{pmatrix}
	1&1&0&0\\
	1&1&1&0\\
	0&1&1&1\\
	0&0&1&1
\end{pmatrix}.
\end{equation*} 
However, $\det(I-A_2)=-1$ and $\det(I-A_{2}^{-})=1$.
\end{example}

\section{Applications}\label{sec: applications}

In this section we provide two applications of Theorem \ref{thm: the main thm}. Our first result shows that conjugacy of general graphs implies that their $C^*$-algebras are isomorphic and the isomorphism decends to their maximal abelian subalgebras. As a corollary we obtain a strengthening of \cite[Theorem 3.2]{BP}. Our second application adds three additional equivalences to \cite[Theorem 1.1]{ERS}, which provides a complete invariant for amplified graphs.

\subsection{Conjugacy and out-splitting}\label{subsec: conj and out-sp}

Two graphs $E$ and $F$ are said to be \emph{conjugate} if there is a 
homeomorphism $h:\partial E\to\partial F$ such that $h(\partial E^{\ge 
1})=\partial F^{\ge 1}$ and $h(\sigma_E(x))=\sigma_F(h(x))$ for all 
$x\in\partial E^{\ge 1}$. It is routine to verify that if $E$ and $F$ are 
conjugate, then they are also orbit equivalent. Thus Theorem \ref{thm: the 
main thm} implies that if $E$ and $F$ both satisfy condition (L) and they are 
conjugate, then there is an isomorphism from $C^*(E)$ to $C^*(F)$ which maps 
$\mathcal{D}(E)$ onto $\mathcal{D}(F)$. In Theorem \ref{thm:conj} we will 
prove that if $E$ and $F$ are conjugate, then $\grp_E$ and $\grp_F$ are 
isomorphic, and hence there is an isomorphism from $C^*(E)$ to $C^*(F)$ which 
maps $\mathcal{D}(E)$ onto $\mathcal{D}(F)$, even if $E$ and $F$ do not 
satisfy condition (L). As a corollary, we strengthen \cite[Theorem 3.2]{BP} 
for out-splittings of graphs.

\begin{thm}\label{thm:conj}
	Let $E$ and $F$ be graphs. If $E$ and $F$ are conjugate, then $\grp_E$ and $\grp_F$ are isomorphic as topological groupoids, and hence there is an isomorphism from $C^*(E)$ to $C^*(F)$ which maps $\mathcal{D}(E)$ onto $\mathcal{D}(F)$.
\end{thm}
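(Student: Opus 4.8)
The plan is to show directly that the conjugating homeomorphism $h\colon\partial E\to\partial F$ lifts to an isomorphism $\tilde h$ of the graph groupoids $\grp_E$ and $\grp_F$, and then to obtain the statement about $C^*$-algebras from Proposition~\ref{prop:groupoid}, exactly as indicated at the start of Section~\ref{sec: ext Weyl}.

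First I would upgrade the conjugacy hypothesis from $\sigma_E$ to all its powers. By induction on $n\in\N$ one checks that $h(\partial E^{\ge n})=\partial F^{\ge n}$ and $h\circ\sigma_E^n=\sigma_F^n\circ h$ on $\partial E^{\ge n}$; the inductive step uses the identity $\partial E^{\ge n}=\{x\in\partial E^{\ge 1}:\sigma_E(x)\in\partial E^{\ge n-1}\}$ together with the level-$1$ intertwining relation. Since $h^{-1}$ is again a conjugacy, the same argument gives $h^{-1}(\partial F^{\ge n})=\partial E^{\ge n}$ and $h^{-1}\circ\sigma_F^n=\sigma_E^n\circ h^{-1}$ on $\partial F^{\ge n}$.

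Next I would define $\tilde h\colon\grp_E\to\grp_F$ by $\tilde h(x,k,y):=(h(x),k,h(y))$. If $(x,k,y)\in\grp_E$, say $k=m-n$ with $\sigma_E^m(x)=\sigma_E^n(y)$, then applying $h$ and the intertwining relations gives $\sigma_F^m(h(x))=\sigma_F^n(h(y))$, so $(h(x),k,h(y))\in\grp_F$ and $\tilde h$ is well defined. It is immediate from the formulas for the product and inverse that $\tilde h$ is a groupoid homomorphism, and the analogous map built from $h^{-1}$ is a two-sided inverse, so $\tilde h$ is a groupoid isomorphism (restricting to $h$ on the unit spaces). To see that $\tilde h$ is a homeomorphism it suffices to check that it carries the basic open set $Z(U,m,n,V)$ of \eqref{grp_topology} onto $Z(h(U),m,n,h(V))$ and that the latter is again a basic open set, and symmetrically for $\tilde h^{-1}$. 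Here one verifies that $h(U)$ is open in $\partial F^{\ge m}$ (as $h$ is a homeomorphism with $h(\partial E^{\ge m})=\partial F^{\ge m}$), that $\sigma_F^m$ is injective on $h(U)$ (from $\sigma_F^m\circ h=h\circ\sigma_E^m$, injectivity of $h$, and injectivity of $\sigma_E^m$ on $U$), similarly for $h(V)$, and that $\sigma_F^m(h(U))=h(\sigma_E^m(U))=h(\sigma_E^n(V))=\sigma_F^n(h(V))$.

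Having produced an isomorphism $\grp_E\cong\grp_F$ of topological groupoids, Proposition~\ref{prop:groupoid} shows that it induces an isomorphism $C^*(\grp_E)\cong C^*(\grp_F)$ carrying $C_0(\grp_E^0)$ onto $C_0(\grp_F^0)$; composing with the isomorphisms $C^*(E)\cong C^*(\grp_E)$ and $C^*(F)\cong C^*(\grp_F)$ of Proposition~\ref{prop:groupoid} yields an isomorphism $C^*(E)\to C^*(F)$ taking $\mathcal{D}(E)$ onto $\mathcal{D}(F)$. I do not expect a serious obstacle: the whole argument is bookkeeping around the (partially defined) shift maps. The one point needing care is precisely that $\sigma_E^n$ is only defined on $\partial E^{\ge n}$, so each intertwining identity must be paired with the corresponding identification of domains $h(\partial E^{\ge n})=\partial F^{\ge n}$ — which is what makes the induction in the first step necessary rather than a one-line remark.
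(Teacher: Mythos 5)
Your proposal is correct and follows exactly the paper's own route: the paper likewise defines $\phi((x,k,y)):=(h(x),k,h(y))$, observes it is an isomorphism of topological groupoids, and then invokes the implication $(2)\implies(1)$ of Theorem~\ref{thm: the main thm} (which itself rests on Proposition~\ref{prop:groupoid}) to obtain the diagonal-preserving isomorphism. You have simply filled in the routine verifications (the induction giving $h(\partial E^{\ge n})=\partial F^{\ge n}$ and $h\circ\sigma_E^n=\sigma_F^n\circ h$, and the check on basic open sets) that the paper leaves implicit.
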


\begin{proof}
	Let $h:\partial E\to\partial F$ be a homeomorphism such that $h(\partial E^{\ge 1})=\partial F^{\ge 1}$ and $h(\sigma_E(x))=\sigma_F(h(x))$ for all $x\in\partial E^{\ge 1}$. Define $\phi:\grp_E\to\grp_F$ by $\phi((x,k,y))=(h(x),k,h(y))$. Then $\phi$ is a homeomorphism, and $\grp_E$ and $\grp_F$ are isomorphic as topological groupoids. Then Theorem \ref{thm: the main thm} implies that there is an isomorphism from $C^*(E)$ to $C^*(F)$ which maps $\mathcal{D}(E)$ onto $\mathcal{D}(F)$.
\end{proof}

As a corollary we are able to strengthen \cite[Theorem 3.2]{BP}. Before we state the corollary we recall the terminology of \cite{BP}.

Let $E$ be a graph and let $\mathcal{P}$ be a partition of $E^1$ constructed 
in the following way. For each $v\in E^0$ with $vE^1\ne\emptyset$, partition 
$vE^1$ into disjoint nonempty subsets 
$\mathcal{E}_v^1,\dots,\mathcal{E}_v^{m(v)}$ where $m(v)\ge 1$, and let 
$m(v)=0$ when $vE^1=\emptyset$. The partion $\mathcal{P}$ is \emph{proper} if 
for each $v\in E^0$ we have that $m(v)<\infty$ and that $\mathcal{E}_v^i$ is 
infinite for at most one $i$. The \emph{out-split} of $E$ with respect to 
$\mathcal{P}$ is the graph $E_s(\mathcal{P})$ where
\begin{align*}
	E_s(\mathcal{P})^0&:=\{v^i:v\in E^0,\ 1\le i\le m(v)\}\cup\{v:v\in E^0,\ m(v)=0\},\\
	E_s(\mathcal{P})^1&:=\{e^j:e\in E^1,\ 1\le j\le m(r(e))\}\cup\{e:e\in E^1,\ m(r(e))=0\}
\end{align*} 
and $r,s:E_s(\mathcal{P})^1\to E_s(\mathcal{P})^0$ are given by
\begin{align*}
	s(e^j)&:=s(e)^i\text{ and }r(e^j):=r(e)^j\text{ for }e\in\mathcal{E}_{s(e)}^i\text{ with }m(r(e))\ge 1,\text{ and}\\
	s(e)&:=s(e)^i\text{ and }r(e):=r(e)\text{ for }e\in\mathcal{E}_{s(e)}^i\text{ with }m(r(e))=0.
\end{align*}

\begin{cor}
	Let $\mathcal{P}$ be a proper partition of $E^1$ as above. Then $E$ and $E_s(\mathcal{P})$ are conjugate and there is an isomorphism from $C^*(E)$ to $C^*(E_s(\mathcal{P}))$ which maps $\mathcal{D}(E)$ onto $\mathcal{D}(E_s(\mathcal{P}))$.
\end{cor}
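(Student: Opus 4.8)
The plan is to reduce the Corollary to Theorem~\ref{thm:conj}: I will construct a conjugacy $h:\partial E\to\partial E_s(\mathcal{P})$, and then the diagonal-preserving isomorphism $C^*(E)\to C^*(E_s(\mathcal{P}))$ follows at once from Theorem~\ref{thm:conj}. For each $e\in E^1$ write $i(e)$ for the unique index with $e\in\mathcal{E}_{s(e)}^{i(e)}$; this makes sense, since $e\in s(e)E^1$ forces $m(s(e))\ge1$ and the sets $\mathcal{E}_{s(e)}^1,\dots,\mathcal{E}_{s(e)}^{m(s(e))}$ partition $s(e)E^1$.

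First I would define $h$ by relabelling each edge of a boundary path with the partition index of the edge that follows it. On infinite paths, set $h(x_1x_2x_3\cdots):=x_1^{i(x_2)}x_2^{i(x_3)}x_3^{i(x_4)}\cdots$; using $s(e^j)=s(e)^{i(e)}$, $r(e^j)=r(e)^j$ and $r(x_k)=s(x_{k+1})$ one checks that consecutive relabelled edges are composable, so this is an element of $E_s(\mathcal{P})^\infty$. For a finite boundary path $\mu=\mu_1\cdots\mu_n$ with $r(\mu_n)\in E^0_\sing$, relabel $\mu_1,\dots,\mu_{n-1}$ by the same rule and treat the terminal data separately: if $r(\mu_n)E^1=\emptyset$ (so $m(r(\mu_n))=0$), put $h(\mu):=\mu_1^{i(\mu_2)}\cdots\mu_{n-1}^{i(\mu_n)}\mu_n$, and for a length-zero $v\in\partial E$ of this kind put $h(v):=v$; if $r(\mu_n)E^1$ is infinite, let $i_0$ be the unique index with $\mathcal{E}_{r(\mu_n)}^{i_0}$ infinite and put $h(\mu):=\mu_1^{i(\mu_2)}\cdots\mu_{n-1}^{i(\mu_n)}\mu_n^{i_0}$, and for a length-zero $v\in\partial E$ of this kind put $h(v):=v^{i_0}$. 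Properness enters here: since $m(r(e))<\infty$ for every $e$, a split vertex $w^i$ emits finitely many edges exactly when $\mathcal{E}_w^i$ is finite, so $w^i$ is singular in $E_s(\mathcal{P})$ iff $\mathcal{E}_w^i$ is infinite; together with the requirement that at most one $\mathcal{E}_w^i$ be infinite, this makes $i_0$ well defined and shows $h(\mu),h(v)\in\partial E_s(\mathcal{P})$.

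Next I would verify that $h$ is a homeomorphism intertwining the shift maps. Its inverse is the map $g$ that simply deletes every superscript: from the identity $r(e_k^{j_k})=s(e_{k+1}^{j_{k+1}})$ in $E_s(\mathcal{P})$ one reads off both $r(e_k)=s(e_{k+1})$ and $j_k=i(e_{k+1})$, which shows simultaneously that $g$ maps $\partial E_s(\mathcal{P})$ into $\partial E$ (the terminal data being handled by properness as above) and that $g$ is a two-sided inverse of $h$. Both $h$ and $g$ are given by local combinatorial rules --- $h$ reads an edge together with its successor, $g$ reads a single symbol --- and, using properness once more, each carries basic compact-open sets to open sets; being mutually inverse bijections, $h$ is therefore a homeomorphism. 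Because $h$ sends vertices to vertices and paths of positive length to paths of positive length, $h(\partial E^{\ge1})=\partial E_s(\mathcal{P})^{\ge1}$; and $h\circ\sigma_E=\sigma_{E_s(\mathcal{P})}\circ h$ on $\partial E^{\ge1}$, since deleting the first edge of $x_1x_2\cdots$ commutes with relabelling, the only delicate case being $|x|=1$, where $\sigma_E(x_1)=r(x_1)$ and the fact that $x_1$ and (when $r(x_1)E^1$ is infinite) $r(x_1)$ are relabelled by the same index $i_0$ gives $h(r(x_1))=\sigma_{E_s(\mathcal{P})}(h(x_1))$.

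Thus $E$ and $E_s(\mathcal{P})$ are conjugate, and Theorem~\ref{thm:conj} completes the proof. The main obstacle is not a single hard step but the bookkeeping at the ends of boundary paths: one must check that properness ($m(v)<\infty$ and at most one infinite $\mathcal{E}_v^i$) is exactly the hypothesis making $h$ well defined and bijective there, namely that each singular vertex $v$ with $vE^1$ infinite splits into a unique singular vertex $v^{i_0}$ together with finitely many regular ones, so that finite boundary paths and length-zero singular vertices have an unambiguous target.
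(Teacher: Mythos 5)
Your proposal is correct and follows essentially the same route as the paper: both construct the explicit conjugacy $h:\partial E\to\partial E_s(\mathcal{P})$ that relabels each edge by the partition index of its successor, use properness to assign the unique infinite class index to the terminal edge (or vertex) when the terminal vertex is an infinite emitter, and then invoke Theorem~\ref{thm:conj}. Your write-up is somewhat more detailed than the paper's about the inverse map and the shift-intertwining at paths of length one, but the argument is the same.
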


\begin{proof}
	Notice that since $\mathcal{P}$ is proper, we have that $v^i\in E_s(\mathcal{P})^0_\reg$ if $v\in E^0_\reg$, and that if $vE^1$ is infinite, then $v^i E_s(\mathcal{P})^1$ is infinite for exactly one $i$. 
	
	For $x=x_0x_1\dots\in\partial E$, let $h(x)=y=y_0y_1\dots\in\partial E_s(\mathcal{P})$ be defined by $h(x)$ having the same length as $x$ and 
	\begin{equation*}
		y_n:=
		\begin{cases}
			x_n&\text{if }m(r(x_n))=0,\\
			x_n^j&\text{if }x_{n+1}\in\mathcal{E}_{r(e)}^j,\\
			x_n^j&\text{if }r(x_n)^j E_s(\mathcal{P})^1\text{ is infinite and the length of }x\text{ is }n.
		\end{cases}
	\end{equation*}
	Then the map $x\mapsto h(x)$ is a homeomorphism from $\partial E$ to $\partial E_s(\mathcal{P})$, $h(\partial E^{\ge 1})=\partial E_s(\mathcal{P})^{\ge 1}$ and $h(\sigma_E(x))=\sigma_{E_s(\mathcal{P})}(h(x))$ for all $x\in\partial E^{\ge 1}$. Thus, $E$ and $E_s(\mathcal{P})$ are conjugate and it follows from Theorem \ref{thm:conj} that there is an isomorphism from $C^*(E)$ to $C^*(E_s(\mathcal{P}))$ which maps $\mathcal{D}(E)$ onto $\mathcal{D}(E_s(\mathcal{P}))$.
\end{proof}

\begin{remark}\label{remark: back to coe but not conjugacy}
To see that orbit equivalence is weaker than conjugacy consider the graphs $E$ 
and $F$ from Example~\ref{ex: coe but not conjugacy}. We have already seen 
that $E$ and $F$ are orbit equivalent. They are not, however, conjugate 
because $\sigma_E(e_2e_2\dots)=e_2e_2\dots$ and $\sigma_F(y)\ne y$ for all 
$y\in\partial F$, and fixed points are a conjugacy invariant.
\end{remark}

\subsection{Amplified graphs and orbit equivalence}\label{subsec: amplified graphs}

In \cite{ERS}, a graph is called \emph{amplified} if whenever there is an edge between two vertices in the graph, there are infinitely many. Theorem 1.1 in \cite{ERS} characterises when the $C^*$-algebras of amplified graphs are isomorphic. Using our main result, we improve this result by adding three additional equivalences, see Theorem \ref{thm:amplified}.  Before we precisely state the result of \cite{ERS} and our improvement, we will first recall the notation of \cite{ERS}. 

If $E$ is a graph, then the \emph{amplification} of $E$ is the graph $\overline{E}$ defined by $\overline{E}^0:=E^0$, $\overline{E}^1:=\{e(v,w)^n:e\in E^1,\ s(e)=v,\ r(e)=w,\ n\in\N\}$, $s(e(v,w)^n):=v$, and $r(e(v,w)^n):=w$. It is routine to see that a graph $E$ is amplified if and only if $E=\overline{E}$. 

If $E$ is a graph, then the \emph{transitive closure} of $E$ is the graph $\mathsf{t}E$ defined by $\mathsf{t}E^0:=E^0$, $\mathsf{t}E^1:=E^1\cup\{e(v,w):\mu\in E^*\setminus (E^0\cup E^1),\ s(\mu)=v,\ r(\mu)=w\}$, with source and range maps that extend those of $E$ and satisfy $s(e(v,w)^n):=v$, and $r(e(v,w)^n):=w$.

Theorem 1.1 of \cite{ERS} says that if $E$ and $F$ are graphs with $E^0$ and $F^0$ finite, then the following 6 statements are equivalent.
	\begin{enumerate}
		\item The graphs $\at{E}$ and $\at{F}$ are isomorphic, in the sense 
		that there are bijections $\phi^0:\at{E}^0\to \at{F}^0$ and 
		$\phi^1:\at{E}^1\to \at{F}^1$ such that $s(\phi^1(e))=\phi^0(s(e))$ 
		and $r(\phi^1(e))=\phi^0(r(e))$ for all $e\in \at{E}^1$.
		\item The $C^*$-algebras $C^*(\at{E})$ and $C^*(\at{F})$ are isomorphic.
		\item The $C^*$-algebras $C^*(\overline{E})$ and $C^*(\overline{F})$ are isomorphic.
		\item The $C^*$-algebras $C^*(\overline{E})$ and $C^*(\overline{F})$ are stably isomorphic.
		\item The tempered primitive ideal spaces $\prim(C^*(\overline{E}))$ and $\prim(C^*(\overline{F}))$ are isomorphic (see \cite[Definition 4.8]{ERS}).
		\item The ordered filtered $K$-theories $FK(C^*(\overline{E}))$ and $FK(C^*(\overline{F}))$ of $C^*(\overline{E})$ and $C^*(\overline{F})$ are isomorphic (see \cite[Definition 4.4]{ERS}).
	\end{enumerate}

The following result improves on \cite[Theorem~1.1]{ERS}.

\begin{thm}\label{thm:amplified}
	Let $E$ and $F$ be graphs with $E^0$ and $F^0$ finite. Then each of the following 3 statements is equivalent to each of the statements (1)--(6) above.
	\begin{enumerate}
		\setcounter{enumi}{6}
		\item The graphs $\overline{E}$ and $\overline{F}$ are orbit equivalent.
		\item The graph groupoids $\mathcal{G}_{\overline{E}}$ and $\mathcal{G}_{\overline{F}}$ are isomorphic as topological groupoids.
		\item There exists an isomorphism from $C^*(\overline{E})$ to $C^*(\overline{F})$ which maps $\mathcal{D}(\overline{E})$ onto $\mathcal{D}(\overline{F})$.
	\end{enumerate}
	Hence the statements (1)--(9) are all equivalent.
\end{thm}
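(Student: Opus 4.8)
The plan is to prove that (7), (8) and (9) are mutually equivalent, that $(9)\implies(3)$, and that $(1)\implies(8)$; since \cite[Theorem~1.1]{ERS} already gives that (1)--(6) are equivalent, these facts together show that all of (1)--(9) are equivalent.

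The observation that drives the argument is that every amplified graph satisfies condition~(L): if $\mu=\mu_1\cdots\mu_n$ is a cycle in an amplified graph $G$, then there are infinitely many edges of $G$ from $s(\mu_1)$ to $r(\mu_1)$, so there is an edge $e\ne\mu_1$ with $s(e)=s(\mu_1)$, and such an $e$ is an exit for $\mu$. In particular $\overline E$ and $\overline F$ satisfy condition~(L), so Theorem~\ref{thm: the main thm} applies directly to $\overline E$ and $\overline F$ and yields $(7)\iff(8)\iff(9)$.

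The implication $(9)\implies(3)$ is immediate, since an isomorphism $C^*(\overline E)\to C^*(\overline F)$ carrying $\mathcal D(\overline E)$ onto $\mathcal D(\overline F)$ is in particular an isomorphism $C^*(\overline E)\cong C^*(\overline F)$. For the other direction I would prove $(1)\implies(8)$. Recall that $\at E$ is the amplification $\overline{\mathsf{t}E}$ of the transitive closure of $E$, so $\at E$ is again amplified and hence satisfies condition~(L). A graph isomorphism $\at E\cong\at F$ as in (1) makes $\at E$ and $\at F$ conjugate, so Theorem~\ref{thm:conj} gives an isomorphism $\grp_{\at E}\cong\grp_{\at F}$ of topological groupoids. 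It therefore suffices to prove the following lemma: for every graph $E$ with $E^0$ finite, the graph groupoids $\grp_{\overline E}$ and $\grp_{\at E}$ are isomorphic as topological groupoids. Granting this, $\grp_{\overline E}\cong\grp_{\at E}\cong\grp_{\at F}\cong\grp_{\overline F}$, which is~(8), and combining with $(7)\iff(8)\iff(9)$, $(9)\implies(3)$, and \cite[Theorem~1.1]{ERS} shows that (1)--(9) are all equivalent.

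Proving the lemma $\grp_{\overline E}\cong\grp_{\at E}$ is the main obstacle, and it is a statement about orbit equivalence, not conjugacy: passing to the transitive closure changes, for example, how many shift-preimages a given boundary path has, which a conjugacy preserves, so $\overline E$ and $\at E$ are in general not conjugate. Since both satisfy condition~(L), Theorem~\ref{thm: the main thm} reduces the lemma to showing that $\overline E$ and $\at E$ are orbit equivalent, and I would do this by writing down the data of Definition~\ref{def: cont orb equiv} explicitly. The homeomorphism $h:\partial\overline E\to\partial\at E$ replaces each occurrence of a ``shortcut'' edge $e(v,w)$ of $\mathsf{t}E$ (which corresponds to a path $\mu\in E^*$ with $s(\mu)=v$, $r(\mu)=w$, and $|\mu|\ge2$) by the path $\mu$ itself; the accompanying cocycle functions record the resulting changes in length, which are locally constant and hence continuous. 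Amplification enters exactly here: it makes the required re-indexing of the (now countably infinite) edge sets between connected vertices a bijection, so that $h$ is well defined and a homeomorphism. Checking that $h$ and the cocycles satisfy \eqref{eq: con orb equiv} is routine but is the technical heart of the argument; alternatively one can construct a diagonal-preserving isomorphism $C^*(\overline E)\to C^*(\at E)$ directly and invoke Proposition~\ref{prop:groupoid-iso}.
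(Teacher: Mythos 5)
Your proposal is correct and follows essentially the same route as the paper: both deduce $(7)\iff(8)\iff(9)$ from Theorem~\ref{thm: the main thm} because amplified graphs satisfy condition~(L), note that $(9)\implies(3)$ is immediate, and close the loop by showing that $\overline{E}$ and $\at{E}$ are orbit equivalent. The only organisational difference is that the paper carries out that last step by finitely many applications of a one-path-at-a-time lemma (Lemma~\ref{lem:ext}, which adds infinitely many edges parallel to a single path $\mu$ and re-indexes the already infinite parallel edge class via two injections with disjoint ranges), rather than building the full edge-replacement homeomorphism in one go as you sketch.
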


To prove Theorem~\ref{thm:amplified} we need two results. We start with a 
modification of 
\cite[Theorem 3.8]{ERS}.

\begin{lemma}\label{lem:ext}
	Let $E$ be a graph and $\mu=\mu_1\mu_2\dots,\mu_m\in E^*$. Let $F$ be the graph with $F^0:=E^0$, $F^1:=E^1\cup\{\mu^n:n\in\N\}$, and range and source maps that extend those of $E$ and satisfy $s(\mu^n):=s(\mu)$ and $r(\mu^n):=r(\mu)$. If the set $\{e\in E^1: s(e)=s(\mu),\ r(e)=r(\mu)\}$ is infinite, then $E$ and $F$ are orbit equivalent.
\end{lemma}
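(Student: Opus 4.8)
The plan is to observe that the hypothesis of the lemma in fact forces $E$ and $F$ to be \emph{isomorphic as directed graphs}, which immediately gives conjugacy and hence orbit equivalence, so no genuine construction is required. Write $v := s(\mu)$ and $w := r(\mu)$, and set $A := \{e\in E^1 : s(e)=v,\ r(e)=w\}$. The hypothesis says $A$ is infinite, and $A$ is countable since $E^1$ is; meanwhile the set $B := \{\mu^n : n\in\N\}$ of new edges is countably infinite and disjoint from $E^1$. Hence $A$ and $A\cup B$ are both countably infinite, so I would fix a bijection $\theta\colon A\to A\cup B$; since every edge of $A\cup B$ has source $v$ and range $w$, such a $\theta$ automatically intertwines the source and range maps.

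Next I would define $\psi\colon E^1\to F^1$ by $\psi|_A := \theta$ and $\psi|_{E^1\setminus A} := \id$. Since $F^1 = E^1\cup B = (E^1\setminus A)\cup(A\cup B)$, the map $\psi$ is a bijection onto $F^1$, and together with $\id_{E^0}$ it preserves source and range, so $(\id_{E^0},\psi)\colon E\to F$ is an isomorphism of directed graphs. Such an isomorphism maps $E^0_\reg$ onto $F^0_\reg$ (it preserves $|vE^1|$ and whether it is empty), so it induces length-preserving bijections $E^*\to F^*$ and $E^\infty\to F^\infty$, and therefore a homeomorphism $h\colon\partial E\to\partial F$ with $h(\partial E^{\ge 1})=\partial F^{\ge 1}$ and $h\circ\sigma_E=\sigma_F\circ h$ on $\partial E^{\ge 1}$. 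Thus $E$ and $F$ are conjugate, and as observed in Section~\ref{subsec: conj and out-sp}, conjugate graphs are orbit equivalent.

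There is essentially no obstacle along this route; the only point to get right is that ``infinitely many'' is exactly the cardinality condition making $A$ and $A\cup B$ equipotent (and without it $E$ and $F$ need not be orbit equivalent at all). If one instead insisted on exhibiting explicit orbit-equivalence data $h,k_1,l_1,k_1',l_1'$ as in Definition~\ref{def: cont orb equiv}, the natural first attempt — recode a boundary path of $F$ by replacing every new edge $\mu^n$ with the path $\mu_1\cdots\mu_m$ — fails injectivity, since the block $\mu_1\cdots\mu_m$ already occurs using old edges; repairing this requires using the infinitely many parallel old edges of $A$ as a reservoir to record which occurrences of that block came from a $\mu^n$, which is precisely the Hilbert-hotel bijection $\theta$ carried out pathwise. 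I would therefore present the graph-isomorphism argument, which packages this cleanly and sidesteps the bookkeeping.
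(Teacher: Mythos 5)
Your argument is internally correct and does prove the statement exactly as printed: since $E^1$ is countable, the hypothesis makes $A=\{e\in E^1:s(e)=s(\mu),\,r(e)=r(\mu)\}$ countably infinite, so $A$ and $A\cup\{\mu^n:n\in\N\}$ are equipotent, the resulting edge bijection (identity off $A$) is a graph isomorphism $E\cong F$, and isomorphic graphs are conjugate, hence orbit equivalent. But this very observation shows the printed statement is degenerate, and comparing with the paper's own proof and with the one place the lemma is used (Proposition~\ref{prop:trans}) reveals that the printed hypothesis contains a typo: the set assumed infinite is meant to be the set of edges parallel to the \emph{first} edge of $\mu$, i.e.\ $\{e\in E^1:s(e)=s(\mu),\,r(e)=r(\mu_1)\}$. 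This is visible in the paper's proof, where each $\mu^n$ is replaced by the concatenation $\eta_1(n)\mu_2\cdots\mu_m$ (which is a path only if $r(\eta_1(n))=s(\mu_2)$) and where $l_1\equiv m$ on $\bigcup_{n\in\N}Z(\mu^n)$; and it is forced by Proposition~\ref{prop:trans}, where one must add infinitely many edges from $v$ to $w$ for vertex pairs joined by a path of length at least $2$ but by \emph{no} edge of $\overline{E}$, so the printed hypothesis would simply be false at the first application.

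Under the intended hypothesis your route collapses: $E$ and $F$ are in general neither isomorphic nor conjugate, because the shortcut edges $\mu^n$ change the number of shift applications needed to get from $s(\mu)$ to $r(\mu)$ --- this is exactly why the paper's cocycle $l_1$ takes the value $m$ on the new cylinders and $1$ elsewhere, a genuine obstruction to conjugacy. The real content of the lemma is the pathwise recoding you sketch and then set aside in your final paragraph: send $\mu^n y\mapsto \eta_1(n)\mu_2\cdots\mu_m\,h(y)$, shift the old occurrences of $e\mu_2\cdots\mu_m$ with $e\in A$ onto $\eta_2(A)$ to keep $h$ injective, and verify Definition~\ref{def: cont orb equiv} with the nonconstant $l_1$ and $k_1'$. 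So your proof is valid for the literal statement, but you should either prove the corrected statement by this construction or flag that the isomorphism argument cannot reach the version of the lemma actually needed for Theorem~\ref{thm:amplified}.
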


\begin{proof}
	Let $A:=\{e\in E^1: s(e)=s(\mu),\ r(e)=r(\mu)\}$ and assume $A$ is infinite. Then there are injective functions $\eta_1:\N\to A$ and $\eta_2:A\to A$ such that $\eta_1(\N)\cap\eta_2(A)=\emptyset$ and $\eta_1(\N)\cup\eta_2(A)=A$. For each $x\in\partial F$, let $h(x)$ be the element of $\partial E$ obtained by, for each $n\in\N$, replacing every occurence of $\mu^n$ by the path $\eta_1(n)\mu_2\mu_3\dots\mu_m$ and, for each $e\in A$, replacing every occurence of the path $a\mu_2\mu_3\dots\mu_m$ by the path $\eta_2(a)\mu_2\mu_3\dots\mu_m$. Then $x\mapsto h(x)$ is a homeomorphism from $\partial F$ to $\partial E$.
	
	Define $k_1,l_1:\partial F^{\ge 1}\to\N$ by 
	\begin{equation*}
		k_1(x):=0\text{ for all }x\in\partial F^{\ge 1},\qquad l_1(x):=
		\begin{cases}
			m&\text{if }x\in\cup_{n\in\N}Z(\mu^n),\\
			1&\text{if }x\notin\cup_{n\in\N}Z(\mu^n).
		\end{cases}
	\end{equation*}
	Then $k_1$ and $l_1$ are both continuous, and 
	$\sigma_E^{k_1(x)}(h(\sigma_F(x)))=\sigma_E^{l_1(x)}(h(x))$ for all 
	$x\in\partial F^{\ge 1}$. Similarly, define $k_1',l_1':\partial E^{\ge 
	1}\to\N$ by 
	\begin{align*}
		k_1'(y)&:=
		\begin{cases}
			m-1&\text{if }y\in\cup_{e\in\eta_1(\N)}Z(e\mu_2\mu_3\dots\mu_m),\\
			0&\text{if }y\in\cup_{e\in\eta_2(A)}Z(e\mu_2\mu_3\dots\mu_m),\\
			0&\text{if }y\notin\cup_{e\in A}Z(e\mu_2\mu_3\dots\mu_m).
		\end{cases} \\
		 l_1'(y)&:=1\text{ for all }x\in\partial F^{\ge 1}.
	\end{align*}
	Then $k_1'$ and $l_1'$ are both continuous, and $\sigma_F^{k_1'(y)}(h^{-1}(\sigma_E(y)))=\sigma_F^{l_1'(y)}(h^{-1}(y))$ for all $y\in\partial E^{\ge 1}$. This shows that $E$ and $F$ are orbit equivalent.
\end{proof}

\begin{prop}\label{prop:trans}
	Let $E$ be a graph with $E^0$ finite. Then $\overline{E}$ and $\at{E}$ are 
	orbit equivalent.
\end{prop}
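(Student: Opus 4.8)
The plan is to transform $\overline{E}$ into $\at{E}$ by finitely many applications of Lemma~\ref{lem:ext}, exploiting that orbit equivalence is transitive --- this follows from Proposition~\ref{prop:pseudogroups and orbit equivalence}, since isomorphism of pseudogroups is visibly transitive --- and that isomorphic, indeed merely conjugate, graphs are orbit equivalent (which is routine: a graph isomorphism induces a conjugacy of boundary path spaces, and conjugate graphs are orbit equivalent).

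First I would record a concrete description of $\at{E}=\overline{\mathsf{t}E}$: it has vertex set $E^0$, and for each ordered pair $(v,w)$ the set of edges from $v$ to $w$ is countably infinite if $w$ is reachable from $v$ by a path of length $\ge 1$ in $E$, and empty otherwise. Indeed $\mathsf{t}E$ contains at least one edge from $v$ to $w$ whenever $w$ is reachable from $v$ (an original edge if $vE^1w\ne\emptyset$, otherwise a shortcut edge $e(v,w)$ arising from a path of length $\ge 2$), it contains no edge from $v$ to $w$ when $w$ is not reachable from $v$, and amplifying blows up every nonempty edge set to a countably infinite one without creating new adjacencies. The analogous description shows $\overline{E}$ has vertex set $E^0$ with the edge set from $v$ to $w$ countably infinite exactly when $vE^1w\ne\emptyset$ and empty otherwise.

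Set $P:=\{(v,w):w\text{ is reachable from }v\text{ in }E\text{ and }vE^1w=\emptyset\}$. Since $E^0$ is finite, $P$ is finite, say $P=\{(v_1,w_1),\dots,(v_N,w_N)\}$, and for each $i$ I would fix a path in $\overline{E}$ from $v_i$ to $w_i$ (necessarily of length $\ge 2$, whose first edge lies in one of the infinite edge bundles of $\overline{E}$ because it covers an edge of $E$). Then I would build graphs $\overline{E}=G_0,G_1,\dots,G_N$, where $G_i$ is obtained from $G_{i-1}$ by adjoining a countably infinite bundle of new edges from $v_i$ to $w_i$ imitating the chosen path; this path still lies in $G_{i-1}$, since $G_{i-1}$ is obtained from $\overline{E}$ only by adjoining edges, so $\overline{E}^1\subseteq G_{i-1}^1$. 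At each step the hypothesis of Lemma~\ref{lem:ext} is satisfied, as the first edge of the chosen path already has infinitely many parallel edges in $\overline{E}\subseteq G_{i-1}$; hence Lemma~\ref{lem:ext} gives that $G_{i-1}$ and $G_i$ are orbit equivalent, and so $\overline{E}$ and $G_N$ are orbit equivalent.

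Finally, comparing edge bundles pair by pair, $G_N$ has a countably infinite edge set from $v$ to $w$ precisely when $vE^1w\ne\emptyset$ or $(v,w)\in P$, that is, precisely when $w$ is reachable from $v$. So $G_N$ and $\at{E}$ have the same vertex set and, between every ordered pair of vertices, edge sets of the same cardinality, hence are isomorphic as graphs and in particular orbit equivalent; chaining the equivalences gives that $\overline{E}$ and $\at{E}$ are orbit equivalent. The one step needing genuine care is verifying that Lemma~\ref{lem:ext} applies at every stage --- this is exactly where amplification (infinitely many parallel edges wherever an edge exists) and the fact that we only ever adjoin, never delete, edges are used --- together with the finiteness of $E^0$, which keeps the process finite so that transitivity of orbit equivalence does the rest.
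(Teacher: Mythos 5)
Your proposal is correct and takes essentially the same route as the paper: the paper's proof is exactly the observation that $\at{E}$ is obtained from $\overline{E}$ by adjoining, for each pair $(v,w)$ with $w$ reachable from $v$, an infinite bundle of edges, and then invoking Lemma~\ref{lem:ext} finitely many times. Your write-up merely makes explicit the bookkeeping the paper leaves implicit (transitivity of orbit equivalence, the concluding graph isomorphism, and the check that the lemma's hypothesis --- infinitely many edges parallel to the first edge of the chosen path --- holds at every stage because $\overline{E}$ is amplified and edges are only ever added).
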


\begin{proof}
	Notice that $\at{E}$ can be obtained from $\overline{E}$ by adding 
	infinitely many edges from $v$ to $w$ whenever there is a path from $v$ to 
	$w$. Thus, that $\overline{E}$ and $\at{E}$ are orbit equivalent follows 
	from finitely many applications of Lemma \ref{lem:ext}.
\end{proof}

\begin{proof}[Proof of Theorem~\ref{thm:amplified}]
	Since both $\overline{E}$ and $\overline{F}$ satisfy condition (L), it 
	follows from our main theorem that (7)--(9) are equivalent, and it is 
	obvious that (9) implies (3). Proposition \ref{prop:trans} shows that (1) 
	implies (7).
\end{proof}

\end{document}